\documentclass[12pt]{amsart}
\usepackage[margin=3cm]{geometry}
\usepackage{tikz}
\usetikzlibrary{arrows.meta}
\usepackage{overpic}
\usepackage{graphicx} 
\usepackage{mathtools}
\usepackage{pgfplots}
\pgfplotsset{compat=1.18} 
\usetikzlibrary{pgfplots.fillbetween}
\usepackage{url}
\usepackage{xcolor}
\usepackage[
  colorlinks=true,
  linkcolor=blue,
  citecolor=blue,
  urlcolor=blue
]{hyperref}

\usepackage{amsmath, amsthm, amsfonts,stmaryrd,amssymb,algorithm, algpseudocode}

\usepackage[normalem]{ulem}

\title[Semi-discrete convex order and Laguerre tessellations]{Semi-discrete convex order and \\Laguerre tessellation fitting}

\let\mc=\mathcal
\let\mr=\mathrm

\def\ed{\mathrm{d}}

\theoremstyle{plain}
\newtheorem{theorem}{Theorem}[section]
\newtheorem{remark}[theorem]{Remark}
\newtheorem{lemma}[theorem]{Lemma}

\newtheorem{proposition}[theorem]{Proposition}

\newtheorem{question}{Question}

\theoremstyle{definition}
\newtheorem{definition}[theorem]{Definition}
\newtheorem{example}[theorem]{Example}

\date{}
\author[D.\ P.\ Bourne]{David P.\ Bourne}
\address{David P.\ Bourne, Maxwell Institute for Mathematical Sciences and Department of Mathematics, Heriot-Watt University, Edinburgh, UK}
\email{d.bourne@hw.ac.uk}

\author[T.\ O. Gallou\"et]{Thomas O.\ Gallou\"et}
\address{Thomas O. Gallou\"et, Université Paris-Saclay, CNRS, Inria, Laboratoire de mathématiques d’Orsay, ParMA, 91405, Orsay, France}
\email{thomas.gallouet@inria.fr}

\author[Q.\ M\'erigot]{Quentin M\'erigot}
\address{Quentin M\'erigot. Universit\'e Paris-Saclay, CNRS, Inria, Laboratoire de math\'ematiques d’Orsay,
91405, Orsay, France / DMA, \'Ecole normale superieure, Université PSL, CNRS, 75005 Paris,
France / Institut universitaire de France (IUF)}
\email{quentin.merigot@universite-paris-saclay.fr}

\author[A. Natale]{Andrea Natale}
\address{Andrea Natale, Université Paris-Saclay, CNRS, Inria, Laboratoire de mathématiques d’Orsay, ParMA, 91405, Orsay, France}
\email{andrea.natale@inria.fr}

\begin{document}

\begin{abstract}  Laguerre tessellations offer an efficient  way to parameterize a large class of convex partitions of Euclidean space using only a set of points and scalar weights. For this reason, they have become popular in computational geometry, imaging and numerical analysis, both as a modeling and a discretization tool. In this paper we study the problem of reconstructing a Laguerre tessellation with prescribed cell volumes from the barycenters of its cells. 
We establish a geometric interpretation of this problem in terms of the set of discrete measures dominated in convex order by an absolutely continuous measure. In particular,
we show that the reconstruction problem can be solved approximately by computing a Wasserstein projection onto this set. More generally,  our method can also be applied to fit a Laguerre tessellation to an arbitrary set of barycenters. 
We give a concrete application of this in materials science, of fitting a Laguerre tessellation to an electron backscatter diffraction (EBSD) image of a steel.
\end{abstract}

\maketitle

\section{Introduction}

Laguerre tessellations, also known as weighted Voronoi tessellations, power diagrams \cite{aurenhammer1987power}, affine partitions \cite{akopyan2012kadets}, or regular partitions \cite{leon2018spaces}, are partitions of $\mathbb{R}^d$ into convex cells that generalize Voronoi tessellations. Specifically, 
given a vector of distinct generators $Y =(y_1,\ldots,y_N)\in (\mathbb{R}^d)^N$, i.e., $y_i\neq y_j$ for all $i \neq j$,
and a vector of weights $\phi=(\phi_1,\ldots,\phi_N)\in\mathbb{R}^N$, the associated Laguerre tessellation is the partition of $\mathbb{R}^d$ defined by the cells
\[
L_i(\phi,Y) \coloneqq\{ x\in\mathbb{R}^d \,;\, \langle x, y_i\rangle-\phi_i \geq  \langle x,y_j\rangle -\phi_j ~ \forall i\neq j\}.
\]
Setting $\phi_i = |y_i|^2/2$, the definition above yields the classical Voronoi tessellation associated with $Y$.
In general, Laguerre tessellations form a strict subset of the space of all convex 
tessellations of $\mathbb{R}^d$. However, it turns out that for $d\geq3$ all convex  
tessellations  satisfying some mild geometric requirements (specifically, ``simple'' tessellations) are always Laguerre \cite[Theorem 4]{aurenhammer1987criterion}.

Because of their versatility, Laguerre tessellations have become an established modeling tool in various fields, including biology \cite{bock2010generalized}, optics \cite{merigot2021mirrors}, and  materials science \cite{BKRS20} to name a few. 
In particular, they are often used to describe the
microstructure of materials such as metals or foams. A common problem in this context 
consists in computing a Laguerre tessellation with prescribed geometrical properties coming
from experimental observations \cite{lyckegaard2011use,petrich2019reconstruction,BKRS20,gritzmann2026constrained}. Motivated by these applications, in this work we will study the following problem: 

\begin{question}\label{ques:reconstruction}
{Can one recover a set of generators and weights associated with a given Laguerre tessellation provided only the volumes and the barycenters of its cells?}
\end{question}

We refer to Figure \ref{fig:example} for an illustration of the problem, and to Section \ref{subsec: motivation} for more about the physical motivation. This problem was originally posed in \cite{bourne2024inverting}. In Theorem 4.5 of \cite{bourne2024inverting}, the authors show that a Laguerre tessellation is uniquely determined by the volumes and barycenters of its cells. However, the generators and weights are not unique in general.

The key idea of this work is to characterize the barycenters of Laguerre cells with prescribed volumes in terms of convex order between probability measures; see Section \ref{sec:convexoreder}. 
This establishes a direct connection between the reconstruction problem and semi-discrete optimal transport. 
We leverage this connection to propose an algorithm that approximately solves the reconstruction problem. Specifically, our method consists in computing a Wasserstein projection onto the set of measures in convex order with a given absolutely continuous measure. Importantly, the proposed approach can be used more generally on experimental data which does not necessarily arise from a Laguerre tessellation.

\begin{figure}
\begin{center}
\begin{tikzpicture}[scale=1.5]
\begin{scope}[shift={(0,0)}]
%\filldraw[lightgray!35, draw=lightgray!60, line width=0.4pt] 
\fill[lightgray!35] (2.0857142857142854,0.5071428571428571) -- (1.1924770314954962,0.9479817963622119) -- (1.0492397772767068,1.9337276315855874) -- (0.35395153993307515,1.2204352443642223) -- (-0.6278112058481353,1.388820735581567) -- (-0.16428571428571437,0.5071428571428572) -- (-0.6278112058481358,-0.3745350212958525) -- (0.35395153993307493,-0.206149530078508) -- (1.0492397772767066,-0.9194419172998733) -- (1.1924770314954962,0.0663039179235021) -- (2.0857142857142854,0.5071428571428571);
\filldraw[red!50, fill opacity=0.5] (-0.05417029710919895,-0.0262266544244478) circle (0.3935200853400884);
\filldraw[green!50, fill opacity=0.5] (1.075714284094312,-0.08067099477494923) circle (0.46074070914405196);
\filldraw[blue!50, fill opacity=0.5] (0.21607083656718973,1.14373038930874) circle (0.39597833432162);
\filldraw[orange!50, fill opacity=0.5] (1.1370750484492522,1.018708197462087) circle (0.45348969985873216);
\filldraw[purple!50, fill opacity=0.5] (0.06427467683735659,0.6218073100703538) circle (0.3932277518986199);
\filldraw[cyan!50, fill opacity=0.5] (0.7369601095872323,0.2560149197075473) circle (0.33362217097327956);
\filldraw[magenta!50, fill opacity=0.5] (0.6461480937824722,0.7282064329166794) circle (0.23854102094887197);
\filldraw[red] (-0.05417029710919895,-0.0262266544244478) circle (1.5pt);
\filldraw[green] (1.075714284094312,-0.08067099477494923) circle (1.5pt);
\filldraw[blue] (0.21607083656718973,1.14373038930874) circle (1.5pt);
\filldraw[orange] (1.1370750484492522,1.018708197462087) circle (1.5pt);
\filldraw[purple] (0.06427467683735659,0.6218073100703538) circle (1.5pt);
\filldraw[cyan] (0.7369601095872323,0.2560149197075473) circle (1.5pt);
\filldraw[magenta] (0.6461480937824722,0.7282064329166794) circle (1.5pt);
\end{scope}
\begin{scope}[shift={(3.0,0)}]
\filldraw[red!50, fill opacity=0.5] (0.375,0.125) -- (0.5,-0.25) -- (0.5,-0.35597984380443226) -- (0.35395153993307493,-0.206149530078508) -- (-0.6278112058481358,-0.3745350212958525) -- (-0.2111194804214762,0.4180597402107381) -- (0.375,0.125);
\filldraw[green!50, fill opacity=0.5] (0.5,-0.25) -- (1.25,0.5) -- (2.0712412763361643,0.5) -- (1.1924770314954962,0.0663039179235021) -- (1.0492397772767066,-0.9194419172998733) -- (0.5,-0.35597984380443226) -- (0.5,-0.25);
\filldraw[blue!50, fill opacity=0.5] (-0.5298054606168581,1.202402730308429) -- (-0.6278112058481353,1.388820735581567) -- (0.35395153993307515,1.2204352443642223) -- (0.75,1.626739186188084) -- (0.75,1.0) -- (0.3125,0.78125) -- (-0.5298054606168581,1.202402730308429);
\filldraw[orange!50, fill opacity=0.5] (0.95,0.6) -- (0.75,1.0) -- (0.75,1.626739186188084) -- (1.0492397772767068,1.9337276315855874) -- (1.1924770314954962,0.9479817963622119) -- (2.0857142857142854,0.5071428571428571) -- (2.0712412763361643,0.5) -- (1.25,0.5) -- (0.95,0.6);
\filldraw[purple!50, fill opacity=0.5] (-0.2111194804214762,0.4180597402107381) -- (-0.16428571428571437,0.5071428571428572) -- (-0.5298054606168581,1.202402730308429) -- (0.3125,0.78125) -- (0.5592105263157895,0.4934210526315789) -- (0.375,0.125) -- (-0.2111194804214762,0.4180597402107381);
\filldraw[cyan!50, fill opacity=0.5] (1.25,0.5) -- (0.5,-0.25) -- (0.375,0.125) -- (0.5592105263157895,0.4934210526315789) -- (0.95,0.6) -- (1.25,0.5);
\filldraw[magenta!50, fill opacity=0.5] (0.5592105263157895,0.4934210526315789) -- (0.3125,0.78125) -- (0.75,1.0) -- (0.95,0.6) -- (0.5592105263157895,0.4934210526315789);
\filldraw[red] (-0.05417029710919895,-0.0262266544244478) circle (1.5pt);
\filldraw[green] (1.075714284094312,-0.08067099477494923) circle (1.5pt);
\filldraw[blue] (0.21607083656718973,1.14373038930874) circle (1.5pt);
\filldraw[orange] (1.1370750484492522,1.018708197462087) circle (1.5pt);
\filldraw[purple] (0.06427467683735659,0.6218073100703538) circle (1.5pt);
\filldraw[cyan] (0.7369601095872323,0.2560149197075473) circle (1.5pt);
\filldraw[magenta] (0.6461480937824722,0.7282064329166794) circle (1.5pt);
\draw[red] (0.0*1.5-.25,0.0*1.5-.25) circle (1pt);
\draw[green] (1.0*1.5-.25,0.0*1.5-.25) circle (1pt);
\draw[blue] (0.5*1.5-.25,1.0*1.5-.25) circle (1pt);
\draw[orange] (1.0*1.5-.25,1.0*1.5-.25) circle (1pt);
\draw[purple] (0.25*1.5-.25,0.5*1.5-.25) circle (1pt);
\draw[cyan] (0.75*1.5-.25,0.25*1.5-.25) circle (1pt);
\draw[magenta] (0.6*1.5-.25,0.8*1.5-.25) circle (1pt);
\end{scope}
\end{tikzpicture}
\end{center}
\vspace{-3em}
\caption{We seek to reconstruct a Laguerre tessellation given its cells' barycenters (i.e., their centroids) and volumes (computed with respect to a given density; here constant on a star), represented by the disks on the left. The empty dots on the right are the unknown generators of the tessellation.} \label{fig:example}
\end{figure}

\subsection{Motivation from materials science.}
\label{subsec: motivation}
{Question \ref{ques:reconstruction} was inspired by \cite{bourne2024inverting} and the image reconstruction problem in \cite{petrich2019reconstruction}, which is about reconstructing the microstructure of polycrystalline materials from incomplete data, namely, far-field three-dimensional X-ray diffraction data (3DXRD). 
Polycrystalline materials, such as steel, are composed of \emph{grains}, which are regions where the atoms form a crystal lattice. 
The grains form a tessellation of the material; see Figure \ref{fig:fitting} (left), where the colors correspond to the grains.  The 3DXRD data consists of the volumes and barycenters of the grains, but the grains themselves are unknown. In \cite{petrich2019reconstruction} the authors reconstruct the grains by finding a Laguerre tessellation that approximately matches the measured volumes and barycenters, using stochastic optimization, where each cell in the Laguerre tessellation represents a grain. 
This reconstruction problem has also been studied for example by \cite{lyckegaard2011use,QR18,bourne2024inverting}. We propose a solution using convex optimization and apply it to  reconstruct an image of steel (see Section \ref{subsubsec:EBSD}).   
}

\subsection{Semi-discrete optimal transport}
\label{subsec: SDOT}
Laguerre tessellations with prescribed cell volumes arise naturally in the theory of optimal transport \cite{aurenhammer1998minkowski,MerigotThibertOT}. Let \(\mc{P}_1(\mathbb{R}^d)\) denote the set of probability measures with finite first moments, i.e., such that $\int |x| \ed \mu(x)<\infty$. Let \(\rho \in \mc{P}_1(\mathbb{R}^d)\) be an absolutely continuous measure and define 
\[
\Delta_N \coloneqq \{ Y=(y_1,\ldots,y_N)\in (\mathbb{R}^d)^N ~~;~~ \exists ~ 1\leq i<j\leq N \text{ such that } y_i = y_j \}.
\]
Given a vector of volumes \(v = (v_1, \ldots, v_N) \in \mathbb{R}_{>0}^N\) satisfying \(\sum_i v_i = 1\), for any \(Y \in (\mathbb{R}^d)^N \setminus \Delta_N\), 
there always exists a vector of weights \(\phi^* \in \mathbb{R}^N\) such that  
\begin{equation}\label{eq:volumes}
	\rho(L_i( \phi^*,Y)) = v_i, \quad \forall\, i = 1,\ldots,N. 
\end{equation}
Moreover, there is a unique Laguerre tessellation that satisfies this condition (see \cite[Theorem 3]{aurenhammer1998minkowski} or also \cite[Section 4]{MerigotThibertOT}). 

To see this, consider the following semi-discrete optimal transport problem:
\begin{equation}\label{eq:ot}
\max\left\{ \int_{\mathbb{R}^d \times \mathbb{R}^d} \langle x, y \rangle \, \mathrm{d}\gamma(x, y) \;;\; \gamma \in \Gamma(\rho, \nu_N(v,Y)) \right\},
\end{equation}
where \(\Gamma(\mu, \nu)\) denotes the set of probability measures on \(\mathbb{R}^d \times \mathbb{R}^d\) with marginals $\mu, \nu \in \mathcal{P}_1(\mathbb{R}^d)$,
and $\nu_N(v,Y)$ is the discrete measure supported on $Y$ with masses $v$, i.e.,
\begin{equation}\label{eq:nuYv}
\nu_N(v,Y) \coloneqq \sum_{i=1}^N v_i \delta_{y_i} \in \mathcal{P}(\mathbb{R}^d).
\end{equation}
The term semi-discrete refers here to the fact that one two marginals of the plan $\gamma$ is absolutely continuous while the other is discrete. Since \(\rho\) is absolutely continuous, Brenier's
Theorem \cite[Section 1.3.1]{SantambrogioBook}
guarantees that problem \eqref{eq:ot} admits a unique solution \(\gamma\), which is characterized as the only transport plan \(\gamma \in \mc{P}(\mathbb{R}^d \times \mathbb{R}^d)\) satisfying \(\gamma = (\mathrm{Id}, \nabla u)_\# \rho\) and 
\begin{equation}\label{eq:pushu}
(\nabla u)_\# \rho = \nu_N(v,Y),
\end{equation}
where \(u : \mathbb{R}^d \to \mathbb{R}\) is a convex function known as Brenier potential, and \(\#\) denotes the push-forward of measures. It can be shown that \(u\) necessarily takes the form
\begin{equation}\label{eq:optu}
u(x) = \max_{i\in \{1,\ldots,N\}} \left\{ \langle x, y_i \rangle - \phi_i \right\},
\end{equation}
for an appropriate vector of weights \(\phi\); see, e.g., \cite[Proposition 37]{MerigotThibertOT}. Consequently,
\[
\nabla u(x) = y_i \quad \text{for a.e. } x \in L_i(\phi,Y).
\]
By expressing condition \eqref{eq:pushu} with this ansatz, one deduces the existence of a vector of weights \(\phi^*\) satisfying \eqref{eq:volumes}. Choosing \(\phi = \phi^*\) in \eqref{eq:optu} yields a Brenier potential for the transport problem \eqref{eq:ot}. Moreover, the  plan $\gamma^*$ defined by
\begin{equation}\label{eq:gammastar}
\int_{\mathbb{R}^d\times \mathbb{R}^d} \varphi(x,y) \, \ed\gamma^*(x,y) = \sum_{i=1}^N \int_{L_i(\phi^*,Y)} \varphi(x,y_i) \, \ed \rho(x)\,,
\end{equation}
for all $\varphi \in C_b(\mathbb{R}^d \times \mathbb{R}^d)$, 
is the unique solution of problem \eqref{eq:ot}.

\subsection{Convex order} \label{sec:convexoreder}
Two probability measures $\nu,\rho \in \mc{P}_1(\mathbb{R}^d)$ are said to be in convex order, denoted $\nu \preceq_C \rho$, if and only if
\begin{equation}\label{eq:convexorder}
\int_{\mathbb{R}^d} \varphi \, \ed \nu \leq \int_{\mathbb{R}^d} \varphi \,\ed \rho
\end{equation}
for all convex functions $\varphi:\mathbb{R}^d\rightarrow \mathbb{R}$. By Strassen's Theorem \cite{strassen1965existence}, a convex order
relationship between two measures is equivalent to the existence of a martingale coupling between them (see Lemma \ref{lem:strassen}). This fact has made convex order relations a central tool in economics and finance applications, for example. In particular, common problems in this context are the design of sampling algorithms preserving convex order relations or, conversely, statistical tests to infer convex order relationships from samples. Recently, these problems have been approached using Wasserstein projections: sampling strategies were proposed in \cite{alfonsi2019sampling, alfonsi2020sampling}, while statistical tests for convex order were developed in \cite{kim2024statistical, kim2024backward}.

The Wasserstein projection of a given probability measure $\nu$ onto the set of measures dominated by $\rho$ in the convex order
is found by solving
\begin{equation}\label{eq:projection}
\inf \{ W^2_2(\mu,\nu)\,; \mu \preceq_C \rho \}
\end{equation}
where $W_2$ denotes the 2-Wasserstein distance (see equation \eqref{eq:w2} for the definition). Remarkably, this problem can also be recast as an instance of weak optimal transport \cite{gozlan2017kantorovich,gozlan2020mixture}; this is a class of optimization problems over couplings $\gamma \in \Gamma(\rho,\nu)$ as in \eqref{eq:ot}, but where the functional minimized is 
not linear in $\gamma$. 
{To be precise, problem \eqref{eq:projection} is equivalent to a barycentric weak optimal transport problem; see for example \cite[Proposition 1.1]{gozlan2020mixture} or \cite[Theorem 2.1]{alfonsi2019sampling}.}
This interpretation has been crucial for the recent development of numerical algorithms to solve projection problems as in \eqref{eq:projection} in the fully discrete setting, where all measures are discrete \cite{alfonsi2020sampling,gallouet2025metric,kim2024statistical}. 
To the best of our knowledge, no numerical strategies for the semi-discrete setting (where $\rho$ in equation \eqref{eq:projection} is absolutely continuous and $\nu$ is discrete) have been proposed prior to this work.

\subsection{Contributions and structure of the paper} Let $\mc{C}_N(v,\rho)$ be the set of particle positions $B =(b_1,\ldots,b_N)\in (\mathbb{R}^d)^N$ associated with discrete measures $\nu_N(v,B)$ that satisfy \eqref{eq:convexorder}, 
i.e.,
\[
\mc{C}_N(v,\rho) \coloneqq \{ B =(b_1,\ldots,b_N)\in (\mathbb{R}^d)^N \,;\, \nu_N(v,B) \preceq_C \rho\},
\]
where $\rho \in\mc{P}_1(\mathbb{R}^d)$ is a given absolutely continuous probability measure with bounded density with respect to Lebesgue, and as before \(v = (v_1, \ldots, v_N) \in \mathbb{R}_{>0}^N\) with \(\sum_i v_i = 1\). 

Using \eqref{eq:convexorder} and the definition of convexity one can verify that $\mc{C}_N(v,\rho)$ is a convex set. 
Our main contributions concern the structure of this set and its relation to the reconstruction problem stated in Question \ref{ques:reconstruction}. 
Specifically, we will show that the exposed points of the set $\mc{C}_N(v,\rho)$ are the vectors $B =(b_i)_i$ with
\[
b_i = \frac{1}{v_i} \int_{L_i(\phi^*,Y)} x \, \ed \rho(x) \,,
\]
where $Y = (\mathbb{R}^d)^N\setminus \Delta_N$ is any vector of generators and $\phi^*$ is such that condition \eqref{eq:volumes} holds. In other words, there exists  
$Y \in (\mathbb{R}^d)^N \setminus \Delta_N$ and $\phi \in \mathbb{R}^N$ such that, for all $i \in \{1,\ldots,N\}$, $L_i(\phi,Y)$ has measure $v_i$ and barycenter $b_i$ (with respect to $\rho$) if and only if $(b_1,\ldots,b_N)$ is an exposed point of $\mc{C}_N(v,\rho)$. This answers an open problem posed in \cite{bourne2024inverting}.
Moreover we show that the extreme points of $\mc{C}_N(v,\rho)$ can be constructed in a similar way but replacing Laguerre tessellations by an inductive type of tessellation introduced in \cite{akopyan2012kadets}, which we name hierarchical Laguerre tessellations. 

Similar results are already present in the literature in different contexts. In \cite{ekeland2014optimal}, 
Ekeland and Schachermayer already identified a connection between convex order and optimal transport that relies on interpreting random variables as exposing directions, which is essentially equivalent to our point of view. However, they do not consider specifically the semi-discrete case, which allows for a very explicit characterization of exposed and extreme points.
On the other hand, in the fully-discrete case, 
i.e., when also $\rho$ is a fixed discrete measure, the set $\mc{C}_N(v,\rho)$ coincides with the so-called \emph{gravity body} introduced by  Brieden and Gritzmann \cite{brieden2012optimal} to study balanced clusterings of points in $\mathbb{R}^d$ \cite{brieden2017constrained}. In that case, $\mc{C}_N(v,\rho)$ is a polytope and its vertices correspond to the barycenters of a certain class of Laguerre tessellations. 
Our results represent the natural extension of such a characterization to the case of an absolutely continuous reference measure $\rho$.

Exploiting our characterization of the set $\mc{C}_N(v,\rho)$, we  provide sufficient conditions that guarantee that, given a collection of barycenters $B$ of a Laguerre tessellation, one can recover a Laguerre tessellation whose cell barycenters are arbitrary close to $B$ by solving 
the
Wasserstein projection problem \eqref{eq:projection}.
Finally, we provide two numerical algorithms to
solve this reconstruction problem, or more generally problem \eqref{eq:projection} in the semi-discrete setting, 
i.e.,
whenever $\rho$ is absolutely continuous and $\nu$ is discrete. Importantly, our algorithm can also be applied to solve the \emph{fitting problem}, which corresponds to the case where  $B$ is not necessarily a collection of barycenters of a Laguerre tessellation.

The rest of the paper is structured as follows. In Section \ref{sec:hierarchical} we introduce the concept of {a} hierarchical Laguerre tessellation and describe its relation with classical Laguerre tessellations. In Section \ref{sec:convexorder} we characterize the exposed and extreme points of the set $\mc{C}_N(v,\rho)$. We also prove in Proposition~\ref{prop:uniqueness} that a hierarchical Laguerre tessellation is uniquely determined by the volumes and barycenters of its cells. This generalizes Theorem~4.5 in \cite{bourne2024inverting}. In Section \ref{sec:projection} we study the projection problem onto to $\mc{C}_N(v,\rho)$, and in Section \ref{sec:laguerre} we describe how one can use it to recover a Laguerre tessellation with prescribed cell volumes from the cell barycenters. Finally, we propose different convex optimization methods to solve the projection problem and show numerical results in Section \ref{sec:numerical}, including an application in materials science of fitting a Laguerre tessellation to an EBSD image of steel provided by Tata Steel Netherlands.

\section{Hierarchical Laguerre tessellations}\label{sec:hierarchical}

In this section we describe the concept of a hierarchical Laguerre tessellation, which has already appeared in the literature for different types of applications \cite{akopyan2012kadets,JaumeRote2016RecursivelyRegular}. Here, we focus on the case where the cell volumes of the tessellations are assigned, and describe the precise relation between hierarchical Laguerre tessellations and classical ones. This will be useful in the next section to characterize the set of discrete measures dominated in  convex order by a given measure.

We start by recalling that a convex tessellation of $\mathbb{R}^d$ is a list $(P_1, \ldots, P_N)$ of (possibly empty) closed convex sets $P_i$, called cells, with pairwise disjoint interiors such that
\[\bigcup_i {P_i} = \mathbb{R}^d.\] 
Let $\rho \in \mathcal{P}_1(\mathbb{R}^d)$ be a reference probability measure
{that is absolutely continuous with respect to the Lebesgue measure.}
We will also denote by $\rho$ its density with respect to the Lebesgue measure, which we assume to be bounded. 
We will be interested in  tessellations with fixed cell volumes with respect to $\rho$.
More precisely, given a vector ${v} =(v_1,\ldots,v_N)\in \mathbb{R}_{>0}^N$ with $\sum_i v_i=1$, we define the set of convex tessellations with volumes ${v}$ as follows:
\[
\mc{T}^N_{\mr{conv}}(v,\rho) \coloneqq \{ (P_i)_{i=1}^N \text{ convex tessellation of } \mathbb{R}^d \,;\, \rho(P_i) = v_i~ \forall i\}\,.
\]
Similarly, we define the set of Laguerre tessellations with volumes $v$ as follows:
\[
\mc{T}^N_{\mr{Lag}}(v,\rho) \coloneqq \{ (P_i)_{i=1}^N \text{ Laguerre tessellation of } \mathbb{R}^d \,;\, \rho(P_i) = v_i~ \forall i\}\,.
\]

Clearly $\mc{T}^N_{\mr{Lag}}(v,\rho)\subset \mc{T}^N_{\mr{conv}}(v,\rho)$. However there are convex  tessellations that are not Laguerre tessellations. Typical examples in any dimension are binary partitions by hyperplanes (see Figure \ref{fig:squares}, right). In $\mathbb{R}^2$, however, we have further counterexamples due to stronger geometric constraints implied by the definition of Laguerre tessellations, illustrated in Figure \ref{fig:cellcomplex}.

\begin{figure}
\begin{tikzpicture}
\draw[thick] (2,1) -- (4,1);
\draw[thick] (4,1) -- (3,2);
\draw[thick] (3,2) -- (2,1);
\draw[thick] (3,3.5) -- (3,2);
\draw[thick] (.5,0) -- (2,1);
\draw[thick] (6,0) -- (4,1);
\draw[thick,dashed,gray] (2,1) -- (3.5,2);
\draw[thick,dashed,gray] (4,1) -- (2,2);
\draw[thick,dashed,gray] (3,2) -- (3,0);
\end{tikzpicture}
\vspace{-2em}
\caption{An example of convex tessellation of $\mathbb{R}^2$ which is not a (hierarchical) Laguerre tessellation. The two-dimensional cells of the tessellation have as boundaries the solid lines. In order for this tessellation to be Laguerre, it should be possible to recover it as the projection of a polyhedron in 
$\mathbb{R}^3$, but this is impossible if the dashed lines do not meet at a single point. In fact if three planes intersect in $\mathbb{R}^3$, their common intersection is either a line or a single point.} \label{fig:cellcomplex}
\end{figure}

By identifying the convex cells of a given tessellation with the associated characteristic
functions, we can interpret $\mc{T}^N_{\mr{conv}}$ and $\mc{T}^N_{\mr{Lag}}$ as subsets of $(L^\infty(\rho))^N$, which we equip with the product weak-* topology. 
Using 
this
topology, we now characterize the closure of $\mc{T}^N_{\mr{Lag}}(v,\rho)$ in terms of hierarchical Laguerre tessellations. 

\begin{definition}[Hierarchical Laguerre Tessellation]
A hierarchical Laguerre tessellation is defined recursively as follows:
\begin{itemize}
    \item Every Laguerre tessellation is a hierarchical Laguerre tessellation;
 \item If $(P_i)_{1\leq i\leq N}$ is a hierarchical Laguerre tessellation with $N$ cells and if $(Q_j)_{1\leq i\leq M}$ is a Laguerre tessellation with $M$ cells, then for any $i_0 \in\{1,\hdots,N\}$, the tessellation with  $N+M-1$ cells 
 $P_1,\hdots, P_{i_0-1}, P_{i_0} \cap Q_1, P_{i_0} \cap Q_2, \hdots, P_{i_0} \cap Q_{M}, P_{i_0+1}, \hdots, P_{N}$
is also a hierarchical Laguerre tessellation.
\end{itemize}
\end{definition}
Informally, given a hierarchical Laguerre tessellation $(P_i)_i$, the tessellation obtained by partitioning a given cell $P_i$ with a Laguerre tessellation is also a hierarchical Laguerre tessellation; see Figure \ref{fig:squares} (right).
In particular, any recursive binary partition of $\mathbb{R}^d$ by hyperplanes defines a hierarchical Laguerre tesselation.
As for Laguerre tessellations, we denote the set of hierarchical Laguerre tessellations with $N$ cells of  prescribed volumes as follows:
\[
\mc{T}^N_{\mr{hLag}}(v,\rho) \coloneqq \{ (P_i)_{i=1}^N \text{ hierarchical Laguerre tessellation of } \mathbb{R}^d \,;\, \rho(P_i) = v_i~ \forall i\}\,.
\]

\begin{proposition}\label{prop:closure} $\overline{\mc{T}^N_{\mr{Lag}}(v,\rho)}=\mc{T}^N_{\mr{hLag}}(v,\rho)$.
\end{proposition}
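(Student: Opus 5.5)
We prove the two inclusions separately, working throughout with the characteristic functions $(\mathbf 1_{P_i})_i$ in $(L^\infty(\rho))^N$ with the product weak-* topology.

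\emph{Inclusion $\mc{T}^N_{\mr{hLag}}(v,\rho)\subset\overline{\mc{T}^N_{\mr{Lag}}(v,\rho)}$.} We argue by induction on the number of refinement steps in the definition of a hierarchical Laguerre tessellation. The base case is trivial. For the inductive step, let the tessellation be obtained from $(P_i)_i$ by splitting $P_{i_0}$ with a Laguerre tessellation $(Q_j)_j=(L_j(\psi,Z))_j$. By induction, $P$ is a weak-* limit of Laguerre tessellations $L_i(\phi^n,Y^n)$ with the same volumes. It therefore suffices, by a diagonal argument, to approximate a single refinement of a fixed Laguerre tessellation $L(\phi,Y)$.

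To do this, replace the generator $y_{i_0}$ by the $M$ generators $y_{i_0}+\varepsilon z_j$ with weights $\phi_{i_0}+\varepsilon\psi_j$. For $x$ in the interior of $L_{i_0}(\phi,Y)$, the maximum in \eqref{eq:optu} is attained among the new indices, and it selects $j$ exactly when $\langle x,z_j\rangle-\psi_j$ is maximal, that is, when $x\in Q_j$. As $\varepsilon\to0$ this gives, $\rho$-a.e., pointwise convergence of the indicators of the new cells to $\mathbf 1_{P_{i_0}\cap Q_j}$ and $\mathbf 1_{P_i}$. Dominated convergence then yields weak-* convergence. The generators are distinct for small $\varepsilon$, generically after a perturbation.

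The volumes are only approximately $v$. To fix this, re-solve \eqref{eq:volumes} for the new generators, obtaining weights $\tilde\phi^\varepsilon$ with exact volumes. We then need stability: the Laguerre tessellation with exact volumes converges to the target as $\varepsilon\to0$. For this we use uniqueness and stability of optimal transport plans. The plans $\gamma^\varepsilon$ between $\rho$ and $\nu_{N+M-1}(v,Y^\varepsilon)$ are supported on a graph. Under the rescaling that keeps track of $z_j$, the cells are determined by the first-order and second-order (in $\varepsilon$) structure. I expect this stability/rescaling step to be the \textbf{main obstacle}. The generators collapse as $\varepsilon\to0$, so plain OT stability only gives convergence of the union of the sub-cells, not of the sub-cells themselves.

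To handle it, I would first show that the weights can be written as $\tilde\phi^\varepsilon_i=\phi_i+o(1)$ for $i\ne i_0$ and $\tilde\phi^\varepsilon_{i_0,j}=\phi_{i_0}+\varepsilon\psi_j+o(\varepsilon)$. The approach is via a quantitative inverse function argument: the Jacobian of the volume map for a Laguerre tessellation, restricted to the relevant blocks, is a nondegenerate graph Laplacian. Alternatively, I would use compactness: extract limits of the rescaled weight differences $(\tilde\phi_{i_0,j}^\varepsilon-\tilde\phi^\varepsilon_{i_0,1})/\varepsilon$, identify the limit sub-tessellation of $P_{i_0}$ as a Laguerre tessellation with generators $z_j$ and volumes $v$, and conclude by uniqueness of such tessellations (the semi-discrete uniqueness of Section~\ref{subsec: SDOT}, applied to $\rho$ restricted to $P_{i_0}$). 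Some care is needed when a weight difference blows up; in that case the limit tessellation has an empty cell, which contradicts $v_j>0$.

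\emph{Inclusion $\overline{\mc{T}^N_{\mr{Lag}}(v,\rho)}\subset\mc{T}^N_{\mr{hLag}}(v,\rho)$.} Take Laguerre tessellations $L(\phi^n,Y^n)$ converging weak-*, with the same volumes. Normalise by subtracting $y^n_1$ and $\phi^n_1$, which leaves the cells unchanged.

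\begin{itemize}
\item If the differences $y^n_i-y^n_j$ and the weights stay bounded, extract convergent subsequences. Volumes bounded below force the limit generators to be distinct and the limit to be a Laguerre tessellation.
\item Otherwise, rescale by the largest scale $\lambda_n=\max|(Y^n,\phi^n)|$ and extract limits $(\bar Y,\bar\phi)$. Indices with coinciding limits $(\bar y_i,\bar\phi_i)$ form clusters; the limit Laguerre tessellation of the distinct cluster representatives gives the top level of the hierarchy.
\item Within each cluster, subtract the common part, rescale by the next scale, and recurse on the number of cells. The cells converge a.e. to intersections of nested Laguerre cells, again using that $\rho$ gives zero mass to hyperplanes.
\end{itemize}

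Convergence of the indicators a.e. together with dominated convergence preserves the volumes in the limit. The recursion terminates because each step strictly reduces cluster sizes, which yields a hierarchical Laguerre tessellation.
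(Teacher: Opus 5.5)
Your route can be made to work, but two steps fail as written.

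In the inclusion $\overline{\mc{T}^N_{\mr{Lag}}(v,\rho)}\subseteq\mc{T}^N_{\mr{hLag}}(v,\rho)$, your first bullet is false. Bounded generators and weights, together with volumes bounded below, force neither distinct limit generators nor a Laguerre limit. The sequence of Figure~\ref{fig:squares} is a counterexample: take $\rho$ uniform on $[0,1]^2$, equal volumes, and $y_1=(-1,0)$, $y_2^n=(1,1/n)$, $y_3^n=(1,-1/n)$. With $\phi_1^n=0$ one gets $\phi_2^n,\phi_3^n\to 2/3$, so everything is bounded. Yet the limit pairs $(\bar y_2,\bar\phi_2)$ and $(\bar y_3,\bar\phi_3)$ coincide, and the limit is hierarchical but not Laguerre. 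What the volume bound actually excludes is two limit pairs with the same generator and \emph{different} weights, since the cell with the larger weight is then eventually empty on every compact set. So you must cluster coinciding limit pairs in every case, not only in the unbounded one. Since $\lambda_n>0$ always, drop the first bullet and always rescale by $\lambda_n$. With your normalisation $(y_1^n,\phi_1^n)=(0,0)$ this gives at least two clusters at each level, so the recursion still terminates.

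In the reverse inclusion, the reduction to ``a single refinement of a fixed Laguerre tessellation'' loses the volume constraint. The refinement of $L^n=L(\phi^n,Y^n)$ by $Q$ has volumes $\rho(L^n_{i_0}\cap Q_j)$, not $v'_j\coloneqq\rho(P_{i_0}\cap Q_j)$. Approximating it by Laguerre tessellations with its own volumes therefore only shows that the target is a limit of Laguerre tessellations whose volumes tend to $v'$. That is not membership in $\overline{\mc{T}^{N+M-1}_{\mr{Lag}}(v',\rho)}$. The fix is to impose the volumes $v'$ when you re-solve for the perturbed generators of $L^n$. Your compactness/uniqueness argument then shows that, as $\varepsilon\to0$, the sub-cells converge to the semi-discrete optimal partition of $\rho|_{L^n_{i_0}}$ towards $\sum_j v'_j\delta_{z_j}$. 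This limit is $L^n_{i_0}\cap L_j(\psi^n,Z)$ for some weights $\psi^n$, not $L^n_{i_0}\cap Q_j$. A second use of OT stability and uniqueness as $n\to\infty$, with source $\rho|_{L^n_{i_0}}\rightharpoonup\rho|_{P_{i_0}}$, identifies the limit as $P_{i_0}\cap Q_j$. After that the diagonal argument goes through.

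With these repairs your proof is correct, and it differs from the paper's mainly in bookkeeping.

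For the first inclusion, the paper normalises only the generators, using translation and dilation invariance of the volume-constrained tessellation, so weights never appear. Convergence of the cluster unions $Q^n_j$ then comes from stability of optimal plans plus uniqueness of the semi-discrete plan, iterated with $\rho|_{Q^n_j}$. The degenerate case you must exclude by hand never arises there. Your argument is more elementary, using pointwise a.e.\ convergence from the max-of-affine formula and no stability theorem, but it needs the empty-cell argument at every level.

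For the reverse inclusion, the paper cites \cite[Lemma 3.5]{akopyan2012kadets} for the whole hierarchy at once. It then re-solves the volumes with the same generators and reruns the first argument on both sequences simultaneously. This is painless because its clustering depends only on $Y^n$, which both sequences share. Your $\varepsilon z_j$ perturbation reproves that lemma one level at a time.

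Your ``main obstacle'' dissolves the same way. Let $U^\varepsilon$ be the union of the new sub-cells and $u_j$ their prescribed volumes. After subtracting $\langle x,y_{i_0}\rangle$ and dividing by $\varepsilon$, the sub-cells are exactly the optimal partition of $\rho|_{U^\varepsilon}$ towards $\sum_j u_j\delta_{z_j}$. So stability with a varying source suffices, and the inverse-function route is unnecessary.
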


\begin{proof}

Let us show that $\overline{\mc{T}^N_{\mr{Lag}}(v,\rho)}\subseteq\mc{T}^N_{\mr{hLag}}(v,\rho)$.
Given any sequence of tessellations $((L_i^n)_i)_n\subset \mc{T}^N_{\mr{Lag}}$, let 
{$(Y^n)_n \subset (\mathbb{R}^d)^N \setminus \Delta_N$}
be the associated sequence of generators, shifted and normalized so that $y^n_1=0$ and $\max_{i\neq k} |y_i^n-y_k^n| =1$.
(This is always possible thanks to the invariance properties of Laguerre tessellations with respect to translations and dilations of the generators; see also \cite[Proposition 6]{Meyron2019}.)
Up to extracting a subsequence (not relabelled), for some $2\leq M \leq N$ and $Y^\infty \in (\mathbb{R}^d)^M \setminus \Delta_M$, and a surjective map $\sigma:\{1,\ldots,N\} \rightarrow \{1,\ldots,M\}$, 
\[
y^n_i \rightarrow y^\infty_{\sigma(i)} \,, \quad \forall i=1,\ldots, N.
\]
Moreover,
\begin{equation}
\label{eq: Proof Prop 2.2 (2)}
\nu_N(v,Y^n) = \sum_{i=1}^N  v_i \delta_{y_i^n} \rightharpoonup \nu_M(w,Y^\infty)= \sum_{j=1}^M  w_j \delta_{y_i^\infty} \,,\quad w_j = \sum_{i\in\sigma^{-1}(j)} v_i,
\end{equation}
{where the convergence is understood as weak convergence of measures, i.e., in duality with continuous bounded functions on $\mathbb{R}^d$.}
Let $(\gamma^n)_n$ be the sequence of optimal transport plans from $\rho$ to $\nu_N(v,Y^n)$, obtained by solving problem \eqref{eq:ot}. By stability of optimal plans (see, e.g., Theorem 5.20 in \cite{villani2009optimal}), this converges weakly to the unique optimal transport plan from $\rho$ to $\nu_M(w,Y^\infty)$,
\[
\gamma = \sum_{j=1}^M 
{\rho|_{L^\infty_j} \otimes  \delta_{y_j^\infty}}  \,,
\]
where $(L^\infty_j)_j$ is the Laguerre tessellation with generators $Y^\infty$ and volumes ${w}$. In particular, denoting
\begin{equation}\label{eq:qjn}
Q^n_j \coloneqq \bigcup_{i \in \sigma^{-1}(j)} L_i^n\,.
\end{equation}
then $\rho|_{Q^n_j} \rightharpoonup \rho|_{L^\infty_j}$ weakly as $n \rightarrow \infty$, from which we can deduce that also the corresponding characteristic
functions converge, i.e.,
\[
 \mathbf{1}_{Q^n_j} \rightarrow \mathbf{1}_{L^\infty_j} \quad \text{weakly-* in } L^\infty(\rho).
\]
If $M=N$, we would be already done, since $(L_j^\infty)_j$ is a Laguerre tessellation. Otherwise, let $1\leq j \leq M$ be any index such that $\tilde{N}\coloneqq |\sigma^{-1}(j)| >1$ and define $Q^n_j$ as above.
We shift and normalize the set $\tilde{Y}^n = (y_i^n)_{i\in\sigma^{-1}(j)}$ so that  $y^n_{i_1} =0$ for some $i_1\in \sigma^{-1}(j)$ and 
$\max \{ |y_i^n-y_k^n| \,;\, i,k \in \sigma^{-1}(j), \; i \ne k \} = 1$.
Up to extracting a subsequence (not relabelled), for some $2\leq \tilde{M} \leq \tilde{N}$ and $\tilde{Y}^\infty \in (\mathbb{R}^d)^{\tilde{M}} \setminus \Delta_{\tilde{M}}$, and a surjective map $\sigma:\{1,\ldots,\tilde{N}\} \rightarrow \{1,\ldots,\tilde{M}\}$, 
\[
y^n_i \rightarrow y^\infty_{\sigma(i)} \,, \quad \forall i=1,\ldots, \tilde{N}.
\]
We can then proceed as before, with $\rho$ replaced by $\rho|_{Q^n_j}$, for which $\rho|_{Q^n_j} \rightharpoonup \rho|_{L^\infty_j}$ as $n\rightarrow \infty$. 
Iterating this argument implies
that $\overline{\mc{T}^N_{\mr{Lag}}}\subseteq \mc{T}^N_{\mr{hLag}}$.

Let us prove that ${\mc{T}^N_{\mr{hLag}}}\subseteq \overline{\mc{T}^N_{\mr{Lag}}} $. Given any hierarchical Laguerre tessellation $(L_i)_i\in \mc{T}^N_{\mr{hLag}}(v,\rho)$, using Lemma 3.5 in \cite{akopyan2012kadets}, we can construct a sequence of Laguerre tessellations $((\tilde{L}_i^n)_i)_n$ 
with generators $Y^n \in (\mathbb{R}^d)^N \setminus \Delta_N$ (but possibly different volumes) such that $\tilde{L}_i^n \rightarrow L_i$  locally in the Hausdorff metric. Since $\tilde{L}_i^n$ and $L_i$ are convex sets, this implies 
\begin{equation}\label{eq:linconv}
 \mathbf{1}_{\tilde{L}^n_i} \rightarrow \mathbf{1}_{L_i} \quad \text{weakly-* in } L^\infty(\rho);
\end{equation}
see \cite{beer1974hausdorff}, for example.
In particular, this means that \[v_i^n \coloneqq \rho(\tilde{L}_i^n)\rightarrow v_i\] for all $i$.
Let $({L}_i^n)_i$ be the Laguerre tessellation with generators $Y^n$, but with weights chosen so that $\rho({L}_i^n)=v_i$ for all $i$. We now show that, up to the extraction of a subsequence, ${L}^n_i$ also converges towards $L_i$ for all $i$, which implies that $\mc{T}^N_{\mr{hLag}}\subseteq \overline{\mc{T}^N_{\mr{Lag}}}$.
Proceeding as before, we can assume that, for some $2\leq M \leq N$ and $Y^\infty \in (\mathbb{R}^d)^M \setminus \Delta_M$, and a surjective map $\sigma:\{1,\ldots,N\} \rightarrow \{1,\ldots,M\}$, 
\[
y^n_i \rightarrow y^\infty_{\sigma(i)} \,, \quad \forall i=1,\ldots, N.
\]
Denoting ${v}^n \coloneqq (v_i^n)_i$, and $w$ as defined in equation \eqref{eq: Proof Prop 2.2 (2)}, we have
\[
\nu_N(v^n,Y^n) \rightharpoonup \nu_M(w,Y^\infty) \,, 
\]
and also
\[
\nu_N(v,Y^n) \rightharpoonup \nu_M(w,Y^\infty).
\]
Using again the stability of optimal transport maps we deduce that 
\[
 \mathbf{1}_{Q^n_j} \rightarrow \mathbf{1}_{L^\infty_j} \quad \text{weakly-* in } L^\infty(\rho),
\]
where $Q^n_j$ is again defined as in \eqref{eq:qjn} 
and $(L_j^\infty)_j$ is  the Laguerre tessellation with volumes ${w}$ and generators $Y^\infty$. Moreover, due to \eqref{eq:linconv}, we also have
\[
L_j^\infty = \bigcup_{i\in \sigma^{-1}(j)} L_i\,.
\]
Iterating the argument as before we obtain the result.
\end{proof}

\section{Semi-discrete convex order}
\label{sec:convexorder}
In this section we study the set of discrete measures of prescribed masses in convex order with respect to $\rho$. We will give a characterization of this set as a convex body 
in
$\mathbb{R}^{dN}$, and in terms of measures supported on the cell barycenters of Laguerre and hierarchical Laguerre tessellations.

As before, we fix a vector $v =(v_1,\ldots,v_N)\in  \mathbb{R}_{>0}^N$ such that $\sum_i v_i = 1$, and 
an absolutely continuous
reference measure $\rho\in\mc{P}_1(\mathbb{R}^d)$ with bounded density.
We study the following set:  
\[
\mc{C}_N(v,\rho) \coloneqq \{ B\in (\mathbb{R}^{d})^N ~;~  \nu_N(v,B) \preceq_C \rho\} \subset \mathbb{R}^{dN},
\]
where the notation $\preceq_C$ was defined in equation \eqref{eq:convexorder} and
we recall that
\[
\nu_N(v,B) \coloneqq \sum_{i=1}^N v_i \delta_{b_i} \in \mathcal{P}(\mathbb{R}^d)\,,
\]
for any $B=(b_1,\ldots,b_N) \in (\mathbb{R}^{d})^N$.
For brevity, we will omit to indicate the dependency of $\mc{C}_N$ on $\rho$ and ${v}$ when it is clear from the context. It will be  convenient to use the weighted $l^2$-metric on $(\mathbb{R}^d)^N$ defined by
\[
\langle Y,Z\rangle_v \coloneqq \sum_i \langle y_i,z_i\rangle v_i,
\]
for any $Y=(y_i)_i$ and $Z=(z_i)_i$ in $(\mathbb{R}^d)^N$. Moreover, we will denote $\|Y\|_v^2 \coloneqq \langle Y,Y\rangle_v$.

\subsection{Basic properties of $\mc{C}_N$}

By definition, a point $B\in \mc{C}_N$ if and only if
\[
\sum_i \varphi(b_i) v_i \leq 
{\int_{\mathbb{R}^d} \varphi \, \ed \rho}
\]
for any convex function $\varphi:\mathbb{R}^d\rightarrow \mathbb{R}$. 
In particular, $\mc{C}_N$ is convex. 
It is also non-empty because, by Jensen's inequality, $(\mr{bary}(\rho),\ldots,\mr{bary}(\rho)) \in \mathcal{C}_N(v,\rho)$, where 
\[
\mr{bary}(\rho) \coloneqq \int_{\mathbb{R}^d} x \, \ed \rho(x)\,.
\]
The set $\mathcal{C}_N$
is also closed, since the functions $\varphi$ defining the set are continuous.  Moreover $\mc{C}_N$ is bounded, since we can 
take $\varphi(x)=|x|$.
Now, if $B \in \mc{C}_N$, 
by taking $\varphi(x) = \pm x$,
we obtain that 
\begin{equation}
\label{eq: 1 is normal to C_N}
\sum_i b_i v_i = \mr{bary}(\rho) \,.
\end{equation}
Due to this constraint $\mc{C}_N$ cannot have full dimension, and {more precisely $\mathrm{dim}(\mc{C}_N) \leq (N-1)d$. 
{(Recall that the dimension of a convex set is defined to be the dimension of its affine hull.)}
Since we are supposing that  $\rho$ is absolutely continuous, the following equality holds:}
\begin{lemma}\label{lem:dimension}
$\mathrm{dim}(\mc{C}_N)=(N-1)d$.
\end{lemma}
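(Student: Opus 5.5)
The upper bound $\dim(\mc{C}_N)\le (N-1)d$ follows from \eqref{eq: 1 is normal to C_N}, so it remains to show that $\mc{C}_N$ contains $(N-1)d+1$ affinely independent points. Equivalently, it suffices to show that $\mc{C}_N$ contains a relative neighbourhood of the point $\bar B\coloneqq(\mr{bary}(\rho),\ldots,\mr{bary}(\rho))$ inside the affine subspace
\[
A\coloneqq\Big\{B\in(\mathbb{R}^d)^N \,;\, \textstyle\sum_i v_i b_i=\mr{bary}(\rho)\Big\}.
\]

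My approach is to produce points of $\mc{C}_N$ as barycenters of cells of a tessellation. The key elementary fact is the following. If $(P_i)_i$ is any measurable partition of $\mathbb{R}^d$ with $\rho(P_i)=v_i$, set $b_i\coloneqq v_i^{-1}\int_{P_i}x\,\ed\rho$. Then, for any convex $\varphi$, Jensen's inequality applied on each cell gives
\[
\sum_i v_i\varphi(b_i)\le\sum_i\int_{P_i}\varphi\,\ed\rho=\int\varphi\,\ed\rho,
\]
so $B=(b_i)_i\in\mc{C}_N$. Since $\mc{C}_N$ is convex, it is then enough to show that the set of such barycenter vectors is not contained in any proper affine subspace of $A$. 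In other words, for every nonzero $Z=(z_i)_i\in(\mathbb{R}^d)^N$ whose associated linear functional is not constant on $A$, we need two partitions giving different values of $\langle Z,B\rangle_v=\sum_i\int_{P_i}\langle z_i,x\rangle\,\ed\rho$. Functionals constant on $A$ correspond to $z_i\equiv z$ for all $i$. So we may assume there exist indices $i\neq j$ with $z_i\neq z_j$.

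For such a pair $i,j$, I would start from a fixed partition, for instance any partition with the prescribed volumes obtained via a Laguerre tessellation as in \eqref{eq:volumes}. I would then modify it only on $P_i\cup P_j$, redistributing this set between cells $i$ and $j$ with the same masses $v_i,v_j$. The value of $\langle Z,B\rangle_v$ changes by $\int_{P_i'}\langle z_i-z_j,x\rangle\,\ed\rho$ minus the same quantity for $P_i$, where $\rho(P_i')=\rho(P_i)$. Because $\rho$ is absolutely continuous and $\rho|_{P_i\cup P_j}$ is a nonzero measure, the function $x\mapsto\langle z_i-z_j,x\rangle$ is not $\rho|_{P_i\cup P_j}$-a.e.\ constant. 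Indeed, its level sets are hyperplanes, which are $\rho$-null. Consequently, the two choices where $P_i'$ is the superlevel set of this linear function and where it is the sublevel set (each with mass $v_i$ inside $P_i\cup P_j$) give strictly different values of $\int_{P_i'}\langle z_i-z_j,x\rangle\,\ed\rho$. These two choices are the maximal and minimal arrangements, respectively, by the bathtub principle. At least one of them therefore differs from the original partition's value, which is what we need.

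The main obstacle is exactly this nondegeneracy step, namely ensuring that a linear function is not a.e.\ constant on a set of positive mass. This is where absolute continuity of $\rho$ enters; for a Dirac mass, $\rho$ would give $\dim\mc{C}_N=0$. A side remark is that the cells produced this way need not be convex. This is harmless, since the Jensen argument above only requires measurable partitions.
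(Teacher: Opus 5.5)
Your proof is correct, and it takes a genuinely different route from the paper's.

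The paper argues by induction on $N$. It splits off one cell as a sub-density $0\le\tilde\rho\le\rho$ of mass $v_{N+1}$ and applies the induction hypothesis to $\rho-\tilde\rho$. It then uses a nested sequence of half-spaces to build $d+1$ such sub-densities with affinely independent barycenters, so that $b_{N+1}$ supplies the missing $d$ dimensions.

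You instead work directly with the affine hull. Since $\mathcal{C}_N\subset A$, every linear functional has the form $\langle Z,\cdot\rangle_v$, and the ones constant on $A$ are exactly the constant vectors $Z$, it suffices that each non-constant $Z$ takes two different values on $\mathcal{C}_N$. Equivalently, the width $F_N(Z)+F_N(-Z)$ of $\mathcal{C}_N$ is positive in every non-trivial direction. You verify this with a two-cell rearrangement and the bathtub comparison.

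Compared with the paper, your version:
\begin{itemize}
\item is shorter and non-inductive;
\item needs only measurable partitions rather than sub-densities;
\item avoids the bookkeeping of combining the $(N-1)d$-dimensional fibres over each $\tilde\rho$ (for the non-normalized measure $\rho-\tilde\rho$) with the $d$ degrees of freedom of $b_{N+1}$;
\item isolates exactly what is used about $\rho$, namely that affine hyperplanes are $\rho$-null.
\end{itemize}
The paper's construction, in exchange, is explicit: it exhibits concrete affinely independent points of $\mathcal{C}_N$.

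Two minor remarks.

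First, the strict inequality between the superlevel and sublevel choices needs slightly more than the fact that $f(x)=\langle z_i-z_j,x\rangle$ is not a.e.\ constant for $\mu=\rho|_{P_i\cup P_j}$. It also uses $0<v_i<\mu(\mathbb{R}^d)=v_i+v_j$. With $F^{-1}$ the quantile function of $f_\#\mu$, the difference of the two integrals is $\int_0^{v_i}\bigl[F^{-1}(s+v_j)-F^{-1}(s)\bigr]\,\mathrm{d}s$. By monotonicity this vanishes only if $F^{-1}$ is a.e.\ constant on $(0,v_i+v_j)$, that is, only if $f_\#\mu$ is a Dirac mass. It is worth writing this line out.

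Second, your opening reformulation (that $\mathcal{C}_N$ contains a relative neighbourhood of $\bar B$ in $A$) is sufficient but stronger than the dimension claim, not equivalent to it a priori. It is also not what your argument proves. What you actually show, and all that is needed, is that the affine hull of $\mathcal{C}_N$ is $A$.
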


\begin{proof} 
We argue by induction on $N$. 
The statement is trivially true for $N=1$. 
 Suppose it is also true for a given $N$, and fix a vector of admissible volumes $v=(v_1,\ldots,v_{N+1})$.  
Let $\tilde{\rho}\in L^\infty(\mathbb{R}^d)$ be any density such that $0\leq \tilde{\rho}\leq \rho$ and $\tilde{\rho}(\mathbb{R}^d)=v_{N+1}$.
{Let $\tilde{v}=( v_1 , \ldots, v_N )$.}
By Jensen's inequality, we can construct $B =(b_1,\ldots,b_{N+1}) \in \mc{C}_{N+1}(v,\rho)$ by setting 
\[b_{N+1} = \mr{bary}(\tilde{\rho})
\coloneqq \frac{1}{\tilde{\rho}(\mathbb{R}^d)} \int_{\mathbb{R}^d} x \, \ed \tilde{\rho}(x),
\]and {taking any} $\tilde{B}=(b_1,\ldots,b_N) {\in \mathcal{C}_N(\tilde{v},\rho - \tilde{\rho})}$ so that
\[
\nu_N(\tilde{{v}},\tilde{B}) \preceq_C \rho -\tilde{\rho}\,.
\]
Hence we just need to show that $\tilde{\rho}$ can be chosen so that 
$b_{N+1} = \mathrm{bary}(\tilde{\rho})$ is an arbitrary point in a set of dimension $d$, i.e., 
$\{  \mathrm{bary}(\tilde{\rho}) \,;\, \tilde{\rho} \in L^\infty(\mathbb{R}^d), \;  0\leq \tilde{\rho}\leq \rho, \; \tilde{\rho}(\mathbb{R}^d)=v_{N+1} \}$
has dimension $d$, as
this implies that $\mathrm{dim}(\mc{C}_{N+1})\geq dN$.
By convexity, it suffices to find $d+1$ densities $\{\tilde{\rho}_i\}_{i=1}^{d+1}$ as above
{(i.e., $\tilde{\rho}_i \in L^\infty(\mathbb{R}^d)$,  $0\leq \tilde{\rho}_i \leq \rho$,  $\tilde{\rho}_i(\mathbb{R}^d)=v_{N+1}$)}
such that the convex hull of $b_1,\ldots,b_{d+1}$, with $b_i = \mr{bary}(\tilde{\rho}_i)$, has dimension $d$. We can construct them as follows.  

First, we define the sets $A_i\subset \mathbb{R}^d$ inductively by
\[
A_1 \coloneqq \mathbb{R}^d\,, \quad p_i \coloneqq \mr{bary}(\rho|_{A_i})\,, \quad A_{i+1} \coloneqq A_{i}\cap \{x {\in \mathbb{R}^d} \,:\, \langle h_{i},x-p_i\rangle >0\}\,,
\]
for $i=1,\ldots,d$,
where $h_1 = \mathbb{R}^d\setminus\{0\}$ is arbitrary and, for all $i \geq 2$, $h_{i} \in \mr{span}(p_2-p_1,\ldots, p_{i}-p_1)^\perp\setminus \{0\}$  is also chosen arbitrarily. It is easy to check that $\rho(A_i)> 0$: in fact, if $\rho(A_i)>0$, then $\rho(A_{i+1})>0$ independently of the choice of $h_i$, since $p_i =\mathrm{bary}(\rho|_{A_i})$. (If this were not the case, then necessarily $\rho( \partial A_{i+1})>0$, but this is impossible since $\rho$ is absolutely continuous).
Then we define
\[
\tilde{\rho}_i \coloneqq \frac{v_{N+1}}{1-\varepsilon_i}\left( \rho - \varepsilon_i \frac{\rho|_{A_i}}{\rho(A_i)} \right)
\]
where $0<\varepsilon_i<\min( \rho(A_i),1-v_{N+1})$ is a given coefficient. By the bounds on $\varepsilon_i$, we have indeed $0\leq \tilde{\rho}_i\leq \rho$ and moreover $\tilde{\rho}_i(\mathbb{R}^d) = v_{N+1}$. 
Now, since $A_1 = \mathbb{R}^d$, then $b_1=p_1$ and, for all $i \ge 2$, 
\begin{equation}
\label{eq: b_i - b_1}
{b_i - b_1 = \mr{bary}(\tilde{\rho}_i) - b_1 = \frac{1}{1-\varepsilon_i} (p_1 - \varepsilon_i p_i) - p_1
= \frac{\varepsilon_i}{\varepsilon_i-1} (p_i - p_1).
}
\end{equation}
Since $\rho$ is absolutely continuous and $A_{i+1}$ is convex, then $p_{i+1}=\mr{bary}(\rho|_{A_{i+1}})\in A_{i+1}$. By definition of $A_{i+1}$, this means that for all $i\geq 2$, 
\[
 p_{i+1}-p_i\notin \mr{span}(p_2-p_1,\ldots, p_{i}-p_1)\,,
 \]
which also implies that $p_{i+1}-p_1 \notin\mr{span}(p_2-p_1,\ldots, p_{i}-p_1)$.
Hence, by induction and equation \eqref{eq: b_i - b_1}, the vectors $
\{b_2-b_1,\ldots, b_{d+1}-b_1\}$
are linearly independent, and we are done.
\end{proof}

\subsection{Strassen's Theorem and barycenters of Laguerre tessellations} 
{
As  can already be deduced from the proof of Lemma \ref{lem:dimension}, one can construct points in $\mc{C}_N$ by decomposing the density $\rho$ appropriately. In order to describe this procedure, let us first recall the following alternative characterization of convex order, due to Strassen \cite{strassen1965existence}:
\begin{lemma}[Strassen's Theorem] \label{lem:strassen} Let $\mu, \nu \in \mc{P}_1(\mathbb{R}^d)$. Then $\mu \preceq_C \nu$ if and only if there exists a martingale coupling between $\mu$ and $\nu$, namely, there exists
a coupling $\theta \in \Gamma(\mu,\nu)$ such that $\ed \theta(x,y) = \ed \theta_x(y) \ed \mu(x)$ (in the sense of disintegration of measures) and
\begin{equation}\label{eq:martingale}
\int_{\mathbb{R}^d} y \, \ed \theta_x(y) = x \quad \text{ for $\mu$-a.e. } x\in\mathbb{R}^d\,.
\end{equation}
\end{lemma}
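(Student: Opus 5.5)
Strassen's theorem has two directions, and we treat them separately.

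\emph{The easy direction.} Suppose $\theta\in\Gamma(\mu,\nu)$ is a martingale coupling, with disintegration $\ed\theta(x,y)=\ed\theta_x(y)\,\ed\mu(x)$ satisfying \eqref{eq:martingale}. Fix a convex $\varphi:\mathbb{R}^d\to\mathbb{R}$. By Jensen's inequality applied to each $\theta_x$,
\[
\varphi(x)=\varphi\Big(\int y\,\ed\theta_x(y)\Big)\le\int\varphi(y)\,\ed\theta_x(y)\quad\text{for }\mu\text{-a.e. }x.
\]
Integrating against $\mu$ gives $\int\varphi\,\ed\mu\le\int\varphi\,\ed\nu$. To make the integrals meaningful, note that a convex function is bounded below by an affine function, which is integrable because the measures lie in $\mc P_1$. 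So each integral is well defined in $(-\infty,+\infty]$.

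\emph{The hard direction.} Assume $\mu\preceq_C\nu$. I would use the Hahn--Banach/minimax approach. Consider the set $\mc M$ of couplings $\theta\in\Gamma(\mu,\nu)$. It is convex, nonempty, and weakly compact, since its marginals are tight. We want some $\theta\in\mc M$ with
\[
\int\langle g(x),\,y-x\rangle\,\ed\theta(x,y)=0\quad\text{for all } g\in C_b(\mathbb{R}^d;\mathbb{R}^d),
\]
which is equivalent to \eqref{eq:martingale}. Define the bilinear pairing $F(\theta,g)=\int\langle g(x),y-x\rangle\,\ed\theta$.

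This $F$ is affine in each variable, and it is weakly continuous in $\theta$ after truncation. The truncation is needed because $y-x$ is unbounded, but it is uniformly integrable thanks to the fixed marginals in $\mc P_1$. By Sion's minimax theorem,
\[
\min_{\theta}\sup_g F=\sup_g\min_\theta F.
\]
So it suffices to show that for every fixed $g$ there is a $\theta$ with $F(\theta,g)\le0$. Then $\min_\theta\sup_g F\le0$, and since $\sup_g F\ge F(\theta,0)=0$, replacing $g$ by $-g$ forces $F(\theta^*,g)=0$ for all $g$ at the minimizer.

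\emph{The main obstacle.} For fixed $g$, the quantity $\min_\theta F(\theta,g)$ is an optimal transport problem with cost $c(x,y)=\langle g(x),y-x\rangle$. By Kantorovich duality it equals
\[
\sup\Big\{\int a\,\ed\mu+\int b\,\ed\nu \;;\; a(x)+b(y)\le\langle g(x),y-x\rangle\Big\}.
\]
We must show this supremum is $\le0$, using only the convex order. Given admissible $(a,b)$, define
\[
\psi(y)=\sup_x\big(a(x)-\langle g(x),y-x\rangle\big)\le -b(y).
\]
Then $\psi$ is convex, being a supremum of affine functions of $y$, and setting $y=x$ gives $\psi(x)\ge a(x)$. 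Hence
\[
\int a\,\ed\mu+\int b\,\ed\nu\le\int\psi\,\ed\mu-\int\psi\,\ed\nu\le0
\]
by the convex order. The delicate points are that $\psi$ may take the value $+\infty$ or fail to be integrable; I would handle this by restricting to bounded $a,b$ and $g$ with compact support, and approximating.

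This step, together with the justification of the minimax on a non-compact domain, is the hardest part. Alternatively, since the statement is classical, the paper could simply cite \cite{strassen1965existence}.
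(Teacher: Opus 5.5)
The paper gives no proof of this lemma. It is quoted as a classical result with a pointer to \cite{strassen1965existence}, whose original argument is functional-analytic: a Hahn--Banach theorem for measures with prescribed marginals dominated by a sublinear functional. It is then used as a black box to obtain \eqref{eq:decomposition} and the support-function identity in Proposition~\ref{prop:dualF}.

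Your proposal is a genuinely different, self-contained route. It is the standard martingale-transport proof: Sion's minimax on the compact convex set $\Gamma(\mu,\nu)$, Kantorovich duality for the cost $\langle g(x),y-x\rangle$, and the convex test function $\psi$ built from the dual potentials. Its main line is correct. It has the advantage of using the same optimal-transport duality language the paper uses elsewhere, while the citation is simply more economical. In the only case the paper needs, $\mu=\nu_N(v,B)$, your variable $g$ only matters at the atoms, so it effectively ranges over a finite-dimensional space.

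The two points you flag as hardest are not real difficulties, and the fix you suggest for the second one would actually fail.

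\emph{The minimax.} Sion's theorem only requires compactness of one factor, which $\Gamma(\mu,\nu)$ provides. Continuity in $g$ for the sup norm follows from $|F(\theta,g)-F(\theta,g')|\le\|g-g'\|_\infty\int|y-x|\,\ed\theta$.

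\emph{The dual step.} Restricting to bounded $a,b$ cannot work. If $g(x_0)\neq0$, then $y\mapsto\langle g(x_0),y-x_0\rangle$ is unbounded below, so no bounded pair satisfies $a(x)+b(y)\le\langle g(x),y-x\rangle$ for all $x,y$. The supremum over bounded potentials is therefore $-\infty$, not the (finite) primal value.

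Use instead the $L^1$ form of Kantorovich duality. It applies because the cost is continuous and bounded below by $-\|g\|_\infty(|x|+|y|)$, with $\mu,\nu\in\mc{P}_1$; see, e.g., Theorem 5.10 in \cite{villani2009optimal}. Take real-valued $a\in L^1(\mu)$ and $b\in L^1(\nu)$ satisfying the constraint pointwise. Your own bounds $a\le\psi\le -b$ then show that $\psi$ is finite everywhere, hence a real convex function, continuous and bounded below by an affine function. Consequently $\int\psi\,\ed\nu$ is finite and $\int\psi\,\ed\mu\ge\int a\,\ed\mu>-\infty$. The convex order then applies to $\psi$ directly and gives $\int a\,\ed\mu+\int b\,\ed\nu\le 0$, with no approximation needed.
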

In our setting 
($\mu = \nu_N(v,B)$, $\nu = \rho$),
this means that 
$B \in \mc{C}_N(\rho,v)$ if and only if there exists $N$ densities $(\rho_i)_{i=1}^N$ such that 
\begin{equation}\label{eq:decomposition}
\sum_{i=1}^N \rho_i = \rho \,, \quad \int_{\mathbb{R}^d} \, \ed  \rho_i = v_i 
\,, \quad b_i = \mr{bary}(\rho_i)\coloneqq\frac{1}{v_i} \int_{\mathbb{R}^d} y \, \ed \rho_i({y})\,,
\end{equation}
for all $i=1,\ldots, N$, or in other words, the points $b_i$ are the barycenters of the densities $\rho_i$. 
{In the notation of Lemma \ref{lem:strassen}, $\theta_{b_i} = \rho_i/v_i$ for all $i$.}
 Note, in particular, that if $v$ and $B$  
 {satisfy}  \eqref{eq:decomposition}, then clearly $\nu_N(v,B) \preceq_C \rho$ by Jensen's inequality. An illustration of this construction is shown in Figure \ref{fig:decomposition}.
}

\begin{figure}
\centering
    \begin{tikzpicture}
      \begin{axis}[
        scale only axis,              % Use only the axis area for sizing
        width=12cm, height=4cm,
        xmin=-3.5, xmax=2.5,           % Center the plot so that b₂ (-0.5) is near the center
        ymin=0, ymax=2.5,              % Set the vertical range explicitly
        enlarge y limits=false,        % Prevent extra vertical space
    %ymin=0, ymax=3.5,              % Manually set the y-range to scale it down
    axis x line=middle, axis y line=none,
    axis on top, % draw the axis lines above the filled areas
    xlabel={$x$},
    domain=-3:3, samples=200,
    xtick=\empty, ytick=\empty,
    enlargelimits=true,
    clip=false,
  ]
    % Baseline (y = 0)
    \addplot[name path=F0, draw=none] coordinates {(-3,0) (3,0)};
    
    % Cumulative curves:
    % F1 = ρ₁
    \addplot[name path=F1, draw=none, domain=-3:3] {0.6*exp(-((x+2)^2)/0.3)};
    % F2 = ρ₁ + ρ₂
    \addplot[name path=F2, draw=none, domain=-3:3] {0.6*exp(-((x+2)^2)/0.3) + 1.2*exp(-((x+0.5)^2)/0.8)};
    % F3 = ρ₁ + ρ₂ + ρ₃ = ρ
    \addplot[name path=F3, draw=none, domain=-3:3] {0.6*exp(-((x+2)^2)/0.3) + 1.2*exp(-((x+0.5)^2)/0.8) + 0.8*exp(-((x-1)^2)/0.5)};
    
    % Fill between curves:
    % Fill from baseline to F1 with pastel blue (ρ₁)
    \addplot[fill=blue!10] fill between[of=F0 and F1];
    % Fill from F1 to F2 with pastel red (ρ₂)
    \addplot[fill=red!10] fill between[of=F1 and F2];
    % Fill from F2 to F3 with pastel green (ρ₃)
    \addplot[fill=green!10] fill between[of=F2 and F3];
    
    % Draw the total density curve ρ in black
    %\addplot[black, thick, domain=-3:3] {0.6*exp(-((x+2)^2)/0.3) + 1.2*exp(-((x+0.5)^2)/0.8) + 0.8*exp(-((x-1)^2)/0.5)};
    
    % Labels for each density contribution:
    \node[blue!70!black] at (axis cs:-2,0.35) {$\rho_1$};
    \node[red!90!black] at (axis cs:-0.5,.9) {$\rho_2$};
    \node[green!70!black] at (axis cs:1,0.6) {$\rho_3$};
    %\node[black] at (axis cs:0,1.5) {$\rho=\rho_1+\rho_2+\rho_3$};

    % Mark barycenters on the x-axis:
    \filldraw[blue!70!black] (axis cs:-2,0) circle (2pt) node[below] {$b_1$};
    \filldraw[red!90!black] (axis cs:-0.5,0) circle (2pt) node[below] {$b_2$};
    \filldraw[green!70!black] (axis cs:1,0) circle (2pt) node[below] {$b_3$};
  \end{axis}
\end{tikzpicture}
\vspace{-3em}
\caption{Construction of a discrete measure in convex order with a density $\rho = \rho_1+\rho_2 +\rho_3$ on $\mathbb{R}$. Setting $b_i = \mr{bary}(\rho_i)$ and $v_i = \rho_i(\mathbb{R})$, then $\nu_3(v,B) \preceq_C \rho$. }\label{fig:decomposition}
\end{figure}

As a particular case of this construction, given 
any measurable partition $(P_i)_{i=1}^N$ of $\mathbb{R}^d$ such that $v_i = \rho(P_i)$ for all $i$,
we can also construct an element of $B\in \mc{C}_N$ simply by setting 
\[
b_i = \mathrm{bary}_{\rho}(P_i) \coloneqq \frac{1}{v_i} \int_{P_i} x \, \ed \rho(x).
\]
In particular, denoting by
\[
\mc{B}^N_{\mr{Lag}}(v,\rho) \coloneqq \{ B \in (\mathbb{R}^{d})^N ~;~ \exists\, (L_i)_i \in \mc{T}^N_{\mr{Lag}}(v,\rho) \text{ such that } b_i = \mathrm{bary}_{\rho}(L_i) ~ \forall \, i\}
\]
and 
\[
\mc{B}^N_{\mr{hLag}}(v,\rho) \coloneqq \{ B \in (\mathbb{R}^{d})^N ~;~ \exists\, (L_i)_i \in \mc{T}^N_{\mr{hLag}}(v,\rho) \text{ such that } b_i = \mathrm{bary}_{\rho}(L_i) ~ \forall \, i\}
\]
the set of barycenters of Laguerre and hierarchical Laguerre tesellations with prescribed volumes, we have $\mc{B}^N_\mr{Lag} \subset\mc{B}^N_\mr{hLag} \subset \mc{C}_N$ by construction.
Furthermore, $\mc{B}^N_{\mr{Lag}}$ is neither convex nor closed. An example showing that the limit of vectors in $\mc{B}^N_{\mr{Lag}}$ may not belong to the same set is shown in Figure \ref{fig:squares}. More precisely, as a corollary of Proposition \ref{prop:closure}
we have the following:

\begin{lemma}\label{lem:closurebary} $\overline{\mc{B}^N_\mr{Lag}} = \mc{B}^N_\mr{hLag}$.
\end{lemma}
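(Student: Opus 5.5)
The plan is to derive both inclusions from Proposition \ref{prop:closure}, using the map that sends a tessellation to its vector of barycenters,
\[
\beta : (P_i)_i \longmapsto \bigl(\mathrm{bary}_\rho(P_i)\bigr)_i = \Bigl(\tfrac{1}{v_i}\textstyle\int \mathbf{1}_{P_i}(x)\, x \,\ed\rho(x)\Bigr)_i .
\]
By definition, $\mc{B}^N_{\mr{Lag}} = \beta(\mc{T}^N_{\mr{Lag}})$ and $\mc{B}^N_{\mr{hLag}} = \beta(\mc{T}^N_{\mr{hLag}})$. The whole argument rests on two properties of $\beta$: it is sequentially continuous for the weak-* topology of $(L^\infty(\rho))^N$, and the set $\mc{T}^N_{\mr{hLag}}(v,\rho)$ is sequentially compact.

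\textbf{Continuity.} Since $\rho\in\mc{P}_1$, each coordinate function $x\mapsto x_k$ belongs to $L^1(\rho)$. Weak-* convergence $\mathbf{1}_{P^n_i}\to\mathbf{1}_{P_i}$ in $L^\infty(\rho)$ therefore gives $\int \mathbf{1}_{P^n_i}\, x\,\ed\rho \to \int \mathbf{1}_{P_i}\, x\,\ed\rho$. The normalizing volumes $v_i$ are fixed, so $\beta$ is sequentially continuous.

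\textbf{Inclusion $\mc{B}^N_{\mr{hLag}}\subseteq\overline{\mc{B}^N_{\mr{Lag}}}$.} Take $B=\beta((L_i)_i)$ with $(L_i)_i\in\mc{T}^N_{\mr{hLag}}$. The proof of Proposition \ref{prop:closure} constructs a sequence $(L^n_i)_i\in\mc{T}^N_{\mr{Lag}}(v,\rho)$ with $\mathbf{1}_{L^n_i}\to\mathbf{1}_{L_i}$ weakly-*, along a subsequence. By continuity, $\beta((L^n_i)_i)\to B$, so $B\in\overline{\mc{B}^N_{\mr{Lag}}}$.

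\textbf{Inclusion $\overline{\mc{B}^N_{\mr{Lag}}}\subseteq\mc{B}^N_{\mr{hLag}}$.} This is the step where care is needed. Let $B^n=\beta((L^n_i)_i)\to B$ with $(L^n_i)_i\in\mc{T}^N_{\mr{Lag}}$. I need a subsequence of the tessellations converging to a hierarchical Laguerre tessellation, and the first half of the proof of Proposition \ref{prop:closure} gives exactly this. Normalize the generators, extract convergent subsequences, and iterate over the clusters. This shows that a subsequence of $(\mathbf{1}_{L^n_i})_i$ converges weakly-* to $(\mathbf{1}_{L_i})_i$ for some $(L_i)_i\in\mc{T}^N_{\mr{hLag}}(v,\rho)$.

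Two points need checking. First, the limit preserves the volumes $v_i$: test the weak-* convergence against $1\in L^1(\rho)$. Second, the limit indicators are genuine indicators of cells, which the iterated construction guarantees. Banach--Alaoglu alone would only produce limit functions with values in $[0,1]$, and these are not sets.

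With this compactness in hand, continuity of $\beta$ gives $B=\lim\beta((L^n_i)_i)=\beta((L_i)_i)\in\mc{B}^N_{\mr{hLag}}$. Combining the two inclusions proves the lemma.
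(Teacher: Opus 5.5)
Your proof is correct and takes essentially the same approach as the paper, which states this lemma as a direct corollary of Proposition \ref{prop:closure}. As you observe, barycenters are pairings of the cell indicators with the $L^1(\rho)$ functions $x\mapsto x_k$, so they pass to weak-* limits, and the subsequence extraction from the first half of that proof gives the inclusion $\overline{\mc{B}^N_{\mr{Lag}}}\subseteq\mc{B}^N_{\mr{hLag}}$. (Alternatively, the statement of Proposition \ref{prop:closure} together with Banach--Alaoglu already suffices: bounded subsets of $L^\infty(\rho)$ are weak-* metrizable, and any weak-* limit point then lies in $\overline{\mc{T}^N_{\mr{Lag}}}=\mc{T}^N_{\mr{hLag}}$, so you need not reopen the proof.)
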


{
The following proposition states that there  exists a one-to-one correspondence between $\mc{T}^N_{\mr{hLag}}$ and $\mc{B}^N_{\mr{hLag}}$, generalizing an analogous result for Laguerre tessellations proved in \cite[Theorem 4.5]{bourne2024inverting}.

\begin{proposition} \label{prop:uniqueness} Let $B \in \mc{B}^N_{\mr{hLag}}(v,\rho)$. Then there exists a unique collection of probability measures $(\theta_i)_{i=1}^N \subset \mc{P}_1(\mathbb{R}^d)$  such that
\begin{equation}\label{eq:conditions}
\sum_i v_i \theta_i  = \rho\,, \quad \mr{bary}(\theta_i) \coloneqq  \int_{\mathbb{R}^d}x\ed \theta_i(x) = b_i \,, \quad \forall\, i =1,\ldots, N.
\end{equation}
In particular, there exists a unique tessellation of $\mathbb{R}^d$ with volumes and barycenters equal to $v$ and $B$ respectively, which therefore must be a hierarchical Laguerre tessellation.
\end{proposition}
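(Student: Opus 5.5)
We prove uniqueness by induction on the depth of the hierarchical construction, using the semi-discrete optimal transport problem \eqref{eq:ot} as the engine. Existence is immediate: if $(L_i)_i \in \mc{T}^N_{\mr{hLag}}(v,\rho)$ has barycenters $B$, then $\theta_i \coloneqq \rho|_{L_i}/v_i$ satisfies \eqref{eq:conditions}.

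\emph{Laguerre case.} Assume first that $B=\mathrm{bary}_\rho(L_i)$ for a Laguerre tessellation $L_i = L_i(\phi,Y)$, and let $(\theta_i)_i$ be any family satisfying \eqref{eq:conditions}. Set $u(x)=\max_i\{\langle x,y_i\rangle-\phi_i\}$ and compute
\[
\sum_i v_i\langle y_i,b_i\rangle-\sum_i v_i\phi_i=\sum_i v_i\int (\langle x,y_i\rangle-\phi_i)\,\ed\theta_i(x)\le \sum_i v_i \int u\,\ed\theta_i=\int u\,\ed\rho .
\]
The left-hand side depends only on $(v,B,Y,\phi)$. For the Laguerre family $\theta_i=\rho|_{L_i}/v_i$ the inequality is an equality. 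Hence equality holds for every admissible family, which forces $\langle x,y_i\rangle-\phi_i=u(x)$ for $\theta_i$-a.e.\ $x$, that is, $\theta_i$ is concentrated on $L_i$. Since $\sum_i v_i\theta_i=\rho$ and the $L_i$ overlap only on $\rho$-null boundaries, we get $v_i\theta_i=\rho|_{L_i}$. Equivalently, $\gamma=\sum_i v_i\,\theta_i\otimes\delta_{y_i}$ is a plan in $\Gamma(\rho,\nu_N(v,Y))$ with the same cost as the optimal one, so it is optimal, and the uniqueness of the Brenier plan gives the conclusion.

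\emph{Hierarchical case.} Suppose the tessellation is obtained from a hierarchical Laguerre tessellation $(P_k)_k$ by subdividing $P_{k_0}$ with a Laguerre tessellation $(Q_j)_j$. Order the construction so that the coarsest level is a genuine Laguerre tessellation $(R_m)_m$ with generators $Y$, each $R_m$ being the union of cells indexed by a group $I_m$. For an admissible family $(\theta_i)$, set $\Theta_m=\sum_{i\in I_m}v_i\theta_i$. Its mass is $w_m=\sum_{i\in I_m}v_i$ and its barycenter is $\sum_{i\in I_m}v_ib_i/w_m$, and these coincide with the mass and barycenter of $\rho|_{R_m}$. The family $(\Theta_m/w_m)_m$ therefore satisfies \eqref{eq:conditions} for the Laguerre tessellation $(R_m)$, and the Laguerre case gives $\Theta_m=\rho|_{R_m}$. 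Within each group, $(\theta_i)_{i\in I_m}$ is an admissible family for the reference measure $\rho|_{R_m}$ (normalized), whose restricted tessellation is hierarchical Laguerre of smaller depth. Induction on the depth then yields $v_i\theta_i=\rho|_{L_i}$ for all $i$. One must check that the Laguerre-case argument holds for any finite reference measure, not necessarily of full support. It does: the argument used only absolute continuity, to make boundaries null.

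\emph{Final claim.} Any tessellation $(P_i)$ with volumes $v$ and barycenters $B$ gives an admissible family $\rho|_{P_i}/v_i$. Uniqueness of the family then yields $\rho|_{P_i}=\rho|_{L_i}$, so the tessellation is determined and coincides with the hierarchical one.

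The main obstacle is this last identification. Equality of the restricted measures $\rho|_{P_i}=\rho|_{L_i}$ pins down the sets $P_i$ only up to $\rho$-null sets, and hence identifies the convex cells only on the region where $\rho$ is positive. We will either interpret uniqueness modulo $\rho$-null sets, as in \cite{bourne2024inverting}, or use convexity together with closedness of the cells, on the convex hull of the support of $\rho$, to conclude equality there. The bookkeeping of the hierarchy in the induction is routine.
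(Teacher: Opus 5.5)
Your proof is correct and is essentially the paper's argument: you aggregate the $\theta_i$ over the cells of the coarsest Laguerre level, identify each aggregate with $\rho|_{R_m}$ via uniqueness of the semi-discrete optimal plan (your equality argument with the potential $u$ is an explicit proof of that uniqueness), and then recurse inside each $R_m$ with reference measure $\rho|_{R_m}$. For the final claim, use the reading ``unique up to $\rho$-null sets'', which is the paper's implicit convention since tessellations are identified with their indicator functions in $(L^\infty(\rho))^N$; your alternative of recovering the cells on the convex hull of $\mathrm{supp}\,\rho$ fails in general, e.g.\ if $\rho$ is uniform on two far-apart disks and $N=2$, every line separating the disks gives a Laguerre tessellation with the same volumes and barycenters.
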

\begin{proof} Let $(\tilde{L}_i)_i$ be a given hierarchical Laguerre tessellation corresponding to $B$. The existence of a $(\theta_i)_i$ as in the statement follows from the fact that we can always take, for $i=1,\ldots,N$, $\theta_i = \frac{1}{v_i} \rho|_{\tilde{L}_i}$. We want to prove that $(\theta_i)_i$ is uniquely determined by the conditions \eqref{eq:conditions}, which implies that $(\tilde{L}_i)_{i=1}^N$ is also unique.

Since $B\in \mc{B}^N_{\mr{hLag}}(v,\rho)$, there exists a Laguerre tessellation $(L_j)_{j=1}^M$ with generators $Y \in (\mathbb{R}^d)^M \setminus \Delta_M$, with $2\leq M\leq N$, and a surjective map $\sigma:\{1,\ldots,N\}\rightarrow \{1,\ldots, M\}$ such that, for all $j=1,\ldots,M$,
\[
 \rho(L_j) = \sum_{i \in \sigma^{-1}(j)} v_i \eqqcolon w_j \,, \quad  \mr{bary}_\rho(L_j)= \frac{1}{w_j} \sum_{i \in \sigma^{-1}(j)} v_i b_i  \,.
\]
By construction, the coupling $\gamma^* \in \mc{P}_1(\mathbb{R}^d\times \mathbb{R}^d)$ defined by
\[
\int_{\mathbb{R}^d\times \mathbb{R}^d} \varphi(x,y) \, \ed \gamma^*(x,y) = \sum_{j=1}^M \int_{L_j} \varphi(x,y_j) \, \ed \rho(x) \quad \forall\, \varphi \in C_b(\mathbb{R}^d\times \mathbb{R}^d)
\]
is the unique optimal transport plan from $\rho$ to $\nu_M(w,Y)$. 
Now, let $(\theta_i)_i \subset \mc{P}_1(\mathbb{R}^d)$ be an arbitrary collection of probability measures satisfying \eqref{eq:conditions}. We can define a coupling $\gamma \in \Gamma(\rho, \nu_M(w,Y))$ as follows:
\[
\int_{\mathbb{R}^d\times \mathbb{R}^d} \varphi(x,y) \, \ed \gamma(x,y) = \sum_{j=1}^M \sum_{i \in \sigma^{-1}(j)} v_i \int_{
\mathbb{R}^d} \varphi(x,y_j) \, \ed \theta_i(x)\quad \forall\, \varphi \in C_b(\mathbb{R}^d\times \mathbb{R}^d)\,.
\]
In particular, it is easy to check that
\[
\int_{\mathbb{R}^d\times \mathbb{R}^d} \langle x, y\rangle \, \ed \gamma(x,y) =\sum_{j=1}^M \sum_{i \in \sigma^{-1}(j)} v_j  \langle b_i,y_j\rangle   = \int_{\mathbb{R}^d\times \mathbb{R}^d} \langle x, y\rangle \, \ed \gamma^*(x,y)\,,
\]
and therefore by the uniqueness of $\gamma^*$, for all $j=1,\ldots, M$,
\[
\sum_{i \in \sigma^{-1}(j)} v_i \theta_i = \rho|_{L_j}\,.
\]
This means that the left-hand side is uniquely determined for all $j$.
But then we can iterate the construction replacing $\rho$ with $\rho|_{L_j}$, for each $j= 1, \ldots, M$. Eventually, this allows us to deduce that each $\theta_i$ is uniquely determined, and we are done.
\end{proof}}

\begin{figure}
\begin{tikzpicture}

% =======================
% Former RIGHT figure (now on the LEFT)
% =======================
\begin{scope}[shift={(-5,0)}]  % shift left

% Draw the second square
\fill[cyan!40, fill opacity=0.5] (4+1,0) rectangle (7+1,3);

\draw[thick] (4.9+1-.0533,-.4) -- (5.1+1,1.5) -- (4.9+1-.0533,3.4);
\draw[thick] (5.1+1,1.5) -- (7+1.7,1.5);

\fill[red] (4.5,1.5) circle (2pt);
\node[above,red] at (4.5,1.5) {$y_1$};

\fill[red] (8.5,2.03) circle (2pt);
\node[right,red] at (8.5,2.03) {$y_3$};

\fill[red] (8.5,0.97) circle (2pt);
\node[right,red] at (8.5,0.97) {$y_2$};

\draw[thick, dashed,red!50] (4.5,1.5) -- (8.5,2.03);
\draw[thick, dashed,red!50] (4.5,1.5) -- (8.5,0.97);
\draw[thick, dashed,red!50] (8.5,2.03) -- (8.5,0.97);

\end{scope}

% =======================
% Former LEFT figure (now on the RIGHT)
% =======================
\begin{scope}[shift={(6,0)}]  % shift right

\fill[cyan!40, fill opacity=0.5] (0,0) rectangle (3,3);
\draw[thick] (1,-.5) -- (1,3.5);
\draw[thick] (1,1.5) -- (3.7,1.5);

\fill[black] (.5,1.5) circle (2pt);
\node[above] at (.5,1.5) {$b_1$};

\fill[black] (2,.75) circle (2pt);
\node[right] at (2,.75) {$b_2$};

\fill[black] (2,2.25) circle (2pt);
\node[right] at (2,2.25) {$b_3$};

\end{scope}

\end{tikzpicture}
\vspace{-2em}
\caption{
Consider the Laguerre tessellation on the left, defined by the generators $(y_1,y_2,y_3)$ and volumes $v_1=v_2=v_3$, where $\rho$ is uniform on a square (shaded area). As $y_2$ and $y_3$ approach each other, the tessellation degenerates to the configuration on the right.
This limit is not a Laguerre tessellation, but a hierarchical Laguerre tessellation: it can be constructed by first partitioning the square into two cells via a Laguerre tessellation (corresponding to the vertical segment), and then subdividing the right  cell using a second Laguerre partition (corresponding to the horizontal segment). In particular, note that the vector of barycenters $B=(b_1,b_2,b_3) \in\mc{B}^N_\mr{hLag}$ is not in $\mc{B}^N_\mr{Lag}$. 
}\label{fig:squares}
\end{figure}

\subsection{Support function, exposed and extreme points} In this section we compute the support function of $\mc{C}_N$, and use this to prove that $\mc{B}^N_{\mr{Lag}}$ and $\mc{B}^N_{\mr{hLag}}$ are precisely the sets of exposed and extreme points
of $\mc{C}_N$, respectively.
 
Define the function $F_N: (\mathbb{R}^d)^N\rightarrow \mathbb{R}$ as follows:
\begin{equation}\label{eq:FY}
F_N(Y) \coloneqq \max\left\{ \int_{\mathbb{R}^d \times \mathbb{R}^d} \langle x,y\rangle \, \ed \gamma(x,y) ~~;~~\gamma \in \Gamma (\rho,\nu_N(v,Y)) \right\}.
\end{equation}
We will also denote the same function by $F_N(v,\rho;Y)$ when we want to
emphasize the dependence on ${v}$ and $\rho$.
Note that (if $\rho$ has finite second moments) $F_N$ is related to the Wasserstein distance $W_2(\rho,\nu_N(v,Y))$ via
\[
F_N(Y) = - \frac 12 W_2^2(\rho,\nu_N(v,Y)) + \frac 12 \int_{\mathbb{R}^d} |x|^2 \, \ed \rho(x) 
+ \frac 12 \sum_{i=1}^N v_i |y_i|^2.
\]
{The following proposition states that $F_N^*$ is the indicator function of the convex set $\mc{C}_N$. Therefore $F_N = F_N^{**}= \iota_{\mc{C}_N}^*$ is the support function of $\mc{C}_N$ by, e.g., \cite[Example 18.3]{BauschkeMoursi2023}.}

\begin{proposition}\label{prop:dualF} The function $F_N$ is convex and continuous; {it is non-differentiable on $\Delta_N$} and $C^1$ on $(\mathbb{R}^d)^N\setminus \Delta_N$, and for all $Y\in (\mathbb{R}^d)^N\setminus \Delta_N$,
\[
\nabla F_N(Y) = ( \, \mr{bary}_\rho (L_i^*(v,Y))\, )_i\,,
\]
where $\nabla$ denotes the gradient with respect to the $l^2$-inner product weighted by ${v}$, $\langle \cdot, \cdot\rangle_{v}$, and $(L_i^*(v,Y))_i$ is the unique Laguerre tessellation with generators $Y$ and volumes $v$.
Moreover, for all $B\in (\mathbb{R}^d)^N$,
\begin{equation}\label{eq:fnstar}
F^*_N(B) = \iota_{\mc{C}_N} (B) \coloneqq \left\{ \begin{array}{ll}
0 & \text{if } \nu_N(v,B) \preceq_C \rho,\\
+\infty & \text{otherwise},
\end{array}\right.
\end{equation}
where $F^*_N$ is the Fenchel-Legendre transform of $F_N$, again with respect to $\langle \cdot, \cdot\rangle_{v}$. {Furthermore, for all $Y \in (\mathbb{R}^d)^N$, the subgradient of $F_N$ at $Y$ is given by
\begin{align}
\label{eq:subgradF}
\partial F_N(Y) & = \mr{argmax} \left\{ \langle Y,X\rangle_v \,;\, X\in \mc{C}_N\right\}
\\
\label{eq:subgradF2}
& = \{ 
B \in \mc{C}_N ~;~ F_N(Y) = \langle B,Y\rangle_v  \}\,.
\end{align}
}
\end{proposition}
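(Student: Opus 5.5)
The approach is to identify $F_N$ directly with the support function of $\mc{C}_N$ via Strassen's Theorem (Lemma \ref{lem:strassen}), and then read off every claim from convex analysis.

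\emph{Support function identity.} I will show that for every $Y\in(\mathbb{R}^d)^N$,
\[
F_N(Y)=\sup\{\langle B,Y\rangle_v \,;\, B\in\mc{C}_N\}.
\]
For ``$\geq$'', take $B\in\mc{C}_N$ and use the decomposition \eqref{eq:decomposition}, $\rho=\sum_i\rho_i$ with $\mr{bary}(\rho_i)=b_i$. The plan $\gamma=\sum_i \rho_i\otimes\delta_{y_i}$ lies in $\Gamma(\rho,\nu_N(v,Y))$, and
\[
\int\langle x,y\rangle\,\ed\gamma=\sum_i v_i\langle b_i,y_i\rangle=\langle B,Y\rangle_v .
\]
For ``$\leq$'', any $\gamma\in\Gamma(\rho,\nu_N(v,Y))$ disintegrates along the second marginal. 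If some $y_i$ coincide, I first split the mass at a repeated point proportionally to the $v_i$, which is always possible. This gives $\rho=\sum_i\rho_i$ with $\rho_i(\mathbb{R}^d)=v_i$, and the barycenters $b_i=\mr{bary}(\rho_i)$ give $B\in\mc{C}_N$ with $\langle B,Y\rangle_v=\int\langle x,y\rangle\,\ed\gamma$.

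Moreover the maximum in \eqref{eq:FY} is attained, since $\Gamma$ is tight and the cost is controlled by first moments. So the supremum is a maximum, attained at the barycenters of an optimal plan.

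\emph{Convexity, continuity and the transform.} As a supremum of linear functions, $F_N$ is convex. It is finite everywhere because $\mc{C}_N$ is bounded, hence continuous. Since $\mc{C}_N$ is nonempty, closed and convex, the Fenchel--Moreau theorem gives $F_N^*=\iota_{\mc{C}_N}^{**}=\iota_{\mc{C}_N}$, which is \eqref{eq:fnstar}.

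\emph{Subdifferential.} The standard formula for the subdifferential of a support function gives
\[
\partial F_N(Y)=\{B\in\mc{C}_N \,;\, \langle B,Y\rangle_v=F_N(Y)\},
\]
which is \eqref{eq:subgradF}--\eqref{eq:subgradF2}. This also follows from the Fenchel--Young equality $F_N(Y)+F_N^*(B)=\langle B,Y\rangle_v$.

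\emph{Differentiability off $\Delta_N$.} Fix $Y\notin\Delta_N$ and let $B\in\partial F_N(Y)$ with a decomposition $(\rho_i)$. The plan $\sum_i\rho_i\otimes\delta_{y_i}$ then achieves $F_N(Y)$. By uniqueness of the optimal plan \eqref{eq:gammastar} and distinctness of the $y_i$, we get $\rho_i=\rho|_{L_i^*(v,Y)}$. Hence $\partial F_N(Y)$ is the singleton consisting of the Laguerre barycenters, so $F_N$ is differentiable at $Y$ with the stated gradient. A convex function differentiable on an open set is automatically $C^1$ there.

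\emph{Non-differentiability on $\Delta_N$.} This is the main obstacle. Suppose $y_i=y_j$ with $i\neq j$. In any optimal plan the combined cell of the repeated generator, $\rho|_{L}$ with $L$ a Laguerre cell for the merged measure, can be split in many ways into two pieces of masses $v_i,v_j$. For instance, cut $L$ by hyperplanes with different normal directions. The pair $(b_i,b_j)$ of barycenters then varies while $v_ib_i+v_jb_j$ stays fixed, and all the other $b_k$ remain unchanged.

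I will therefore exhibit two distinct splittings giving distinct $B$. This uses absolute continuity and $\rho(L)>0$: choose hyperplane cuts in two directions and argue that they cannot give identical barycenters, in the spirit of the construction in Lemma \ref{lem:dimension}. Both resulting $B$ attain $\langle B,Y\rangle_v=F_N(Y)$, so the subdifferential is not a singleton and $F_N$ is not differentiable at $Y$.
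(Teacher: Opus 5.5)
Your proof is correct, but it orders the argument differently from the paper.

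\textbf{The paper's route.} It treats the regularity claims first and by hand. For $Y\notin\Delta_N$ it sends the Laguerre cells $L_i^*(v,Y)$ to arbitrary points $z_i$. This gives the explicit inequality $F_N(Z)\geq F_N(Y)+\langle B(Y),Z-Y\rangle_v$, hence $B(Y)\in\partial F_N(Y)$ and convexity. It then gets $C^1$ from continuity of $Y\mapsto B(Y)$, which it imports from stability of optimal transport. Only afterwards does it use disintegration and Strassen's theorem to identify $F_N$ with the support function of $\mc{C}_N$, and from that read off $F_N^*$ and $\partial F_N$.

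\textbf{Your route.} You prove the support-function identity first, handling repeated generators by proportional splitting, which the paper leaves implicit. Convexity and continuity then come for free. Differentiability off $\Delta_N$ follows from Brenier uniqueness, which forces $\partial F_N(Y)$ to be the single Laguerre-barycenter vector. $C^1$ then follows from the general fact that a convex function differentiable on an open set is $C^1$ there.

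\textbf{What each buys.} Your route is more self-contained: it needs no stability theorem, and continuity of the Laguerre barycenter map comes out as a corollary rather than an input. The paper's route gives an explicit subgradient inequality that does not need Strassen, together with the selection $B(Y)$ that it reuses in \eqref{eq:BY} and Lemma~\ref{lem:lip}. Your non-differentiability argument on $\Delta_N$ is also more explicit than the paper's ``by a similar reasoning''.

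\textbf{One step to tighten.} Two hyperplane cuts with different normals can produce the same split of the merged cell. For example, if $\rho|_L$ is supported on two well-separated balls carrying masses $v_i$ and $v_j$, all nearby normals separate them identically. So ``different directions'' alone does not guarantee different barycenters. Two fixes work:
\begin{itemize}
\item Use opposite normals $e$ and $-e$. The lowest $e$-portion of mass $v_i$ has $e$-mean strictly below the $e$-mean of the mass being split, and the highest $e$-portion strictly above it (here $v_j>0$ and absolute continuity give exact cuts). So the two resulting $b_i$ differ.
\item Alternatively, note that the admissible $(b_l)_{l\in\sigma^{-1}(k)}$ on the merged cell form a set of type $\mc{C}_n$, with $n=|\sigma^{-1}(k)|\geq 2$, for the normalized restriction of $\rho$. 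By Lemma~\ref{lem:dimension} this set has dimension $(n-1)d>0$, so $\partial F_N(Y)$ contains more than one point.
\end{itemize}
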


{Note that, for $Y \in \Delta_N$, we give an explicit expression for an element of $\partial F_N(Y)$ below in equation \eqref{eq:BY}.}

\begin{proof}
The first part follows from Proposition 5.1 in \cite{leclerc2020lagrangian} or Theorem 4.2 of \cite{bourne2024inverting}, for example. We give a sketch of the proof for completeness. Given any two vectors $Z,Y \in(\mathbb{R}^d)^N\setminus \Delta_N$,
the coupling $\gamma \in \mc{P}(\mathbb{R}^d\times \mathbb{R}^d)$ defined by
\[
\int_{\mathbb{R}^d \times \mathbb{R}^d} \varphi(x,y) \, \ed \gamma(x,y) = \sum_{i} \int_{L_i^*(v,Y)} \varphi(x,z_i) \, \ed \rho(x),
\]
for any bounded continuous function $\varphi:\mathbb{R}^d\times \mathbb{R}^d\rightarrow \mathbb{R}$, belongs to $\Gamma(\rho, \nu_{N}(Z,v))$ and is admissible for the problem defining $F_N(Z)$. {In addition, the coupling
$\gamma^* \in \mc{P}(\mathbb{R}^d\times \mathbb{R}^d)$ defined by
\begin{equation}
\label{eq: optimal coupling}   \int_{\mathbb{R}^d \times \mathbb{R}^d} \varphi(x,y) \, \ed \gamma^*(x,y) = \sum_{i} \int_{L_i^*(v,Y)} \varphi(x,y_i) \, \ed \rho(x)
\quad \forall \; \varphi \in C_b(\mathbb{R}^d\times \mathbb{R}^d)
\end{equation}
is the unique solution of the problem defining $F_N(Y)$.} In particular, this implies that
\[
F_N(Z) \geq F_N(Y) + \sum_i \int_{L_i^*(v,Y)} \langle x, z_i -y_i\rangle \, \ed \rho(x).
\]
This means that $B(Y) \coloneqq (\mr{bary}_\rho L_i^*(v,Y))_i \in \partial F_N(Y)$,  the subgradient of $F$ with respect to the metric $\langle \cdot,\cdot\rangle_v$, and therefore $F$ is convex. By stability of optimal transport, $\mr{bary}_\rho(L_i^*(v,Y))$ is a continuous function of $Y$ (see Proposition 5.1 in \cite{leclerc2020lagrangian}), which implies at once that $B(Y)=\nabla F_N(Y)$  and that $F$ is $C^1$. {On the other hand, by a similar reasoning, one can show that if $Y\in \Delta_N$, then $\partial F_N(Y)$ does not consist of a single vector so $F_N$ is not differentiable at these points}.

For the second part of the statement, observe that, by disintegrating the plan $\gamma$ in equation \eqref{eq:FY} with respect to its second marginal, the function $F_N$  can be equivalently written as follows:
\begin{equation}\label{eq:FN2}
F_N(Y) = \max \left\{ \sum_{i=1}^N  \int_{\mathbb{R}^d} \langle x, y_i\rangle v_i \ed \theta_i(x) ~~ ;~~ \theta_i \in \mc{P}_1(\mathbb{R}^d)\,, ~\sum_{i=1}^N \theta_i v_i = \rho \right\}\,.
\end{equation}
In view of Strassen's theorem (see in particular equation \eqref{eq:decomposition}), this is equivalent to 
	\begin{equation}\label{eq:support}
	F_N(Y) = \max\{ \langle Y, X\rangle_v \; ; \; X \in \mc{C}_N\}\,,
	\end{equation}
which in turn is equivalent to \eqref{eq:fnstar}.

{Next we prove \eqref{eq:subgradF}. 
Since $F_N = \iota_{\mc{C}_N}^*$, by Fenchel duality,
\[
\begin{aligned}
B \in \partial F_N(Y)& \iff Y \in \partial \iota_{\mc{C}_N}(B) \\& \iff \iota_{C_N}(X) \geq \iota_{\mc{C}_N}(B) + \langle Y,X-B\rangle_v\,, ~ \forall\, X\in(\mathbb{R}^{d})^N\\
& \iff  B\in \mc{C}_N~\text{ and } ~  \langle Y, B \rangle_v \geq \langle Y , X \rangle_v \,, ~ \forall\, X\in \mc{C}_N\,,
\end{aligned}
\]
which proves \eqref{eq:subgradF}. Finally, \eqref{eq:subgradF2} follows from \eqref{eq:subgradF} and \eqref{eq:support}.
}
\end{proof}

The function $F_N$ allows us to provide a characterization of the exposed and extreme points of $\mc{C}_N$. We remark that a similar result but in the continuous setting was established in \cite{ciosmak2023applications}, but considering the linear structure on probability measures, which is different from the type of convexity considered in this work.

We recall that the exposed faces of $\mc{C}_N$ are the sets of maximizers of a linear form on $\mc{C}_N$. Exposed faces are faces, i.e., convex subsets $\mc{F}\subset\mc{C}_N$ containing every closed segment in $\mc{C}_N$ whose interior intersects $\mc{F}$. Proper (exposed) faces are (exposed) faces that are neither the empty set nor $\mc{C}_N$. Exposed points are exposed faces containing a single point.

\begin{theorem}[Exposed points and faces] \label{th:boundary}
The following holds:
\begin{equation}\label{eq:envelopebary}\mc{C}_N =  \mathrm{conv}(\overline{\mc{B}^N_\mr{Lag}} )\,.\end{equation}
Moreover,
\begin{enumerate}
\item $\mc{B}^N_{\mr{Lag}}$ coincides with the set of exposed points of $\mc{C}_N$; 
\item the proper exposed faces of $\mc{C}_N$ have dimension $d(N-M)$ for $M=2,\ldots,N$. For a given $M$, the exposed faces of dimension $d(N-M)$ are given by $\{
{\mc{F}^N_\mr{Lag}}
(Y,\sigma)\}_{Y,\sigma}$  where $Y\in (\mathbb{R}^d)^M\setminus \Delta_M$, $\sigma:\{1,\ldots,N\} \rightarrow \{1,\ldots,M\}$ is any surjective map and
\begin{equation}\label{eq:Fnlag}
{\mc{F}^N_{\mr{Lag}}}
(Y,\sigma) = \Big\{ B \in (\mathbb{R}^d)^N~;~
\sum_{i\in \sigma^{-1}(j)}   v_{i} \delta_{b_{i}} \preceq_C \rho|_{L_j^*(w,Y)}\,,~\forall\, j = 1,\ldots,M \Big\},
\end{equation}
where $(L_i^*(w,Y))_i$ is the unique Laguerre tessellation with generators $Y$ and volumes $w = (w_1,\ldots, w_M)$, where
\begin{equation}\label{eq:wdef}
w_j = \sum_{i
\in \sigma^{-1}(j)} v_i \,, \quad \forall\,j = 1,\ldots,M\,;
\end{equation}
\item the relative boundary of $\mc{C}_N$ can be characterized as follows:
\[
\mathrm{rel} \,\partial \mc{C}_N = \bigcup_{ Y,\sigma }  
{\mc{F}^N_{\mr{Lag}}}
(Y, \sigma)
\]
where the 
union
is over any $Y \in (\mathbb{R}^d)^M\setminus \Delta_M$, $M=2,\ldots, N$ and $\sigma :\{1,\ldots,N\} \rightarrow \{1,\ldots,M\}$ surjective.
\end{enumerate}
\end{theorem}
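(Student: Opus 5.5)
The plan is to read everything off the support-function description in Proposition~\ref{prop:dualF}. Since $F_N = \iota_{\mc{C}_N}^*$, the exposed face of $\mc{C}_N$ in direction $Y$ is exactly $\partial F_N(Y)$, by \eqref{eq:subgradF}.

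For \eqref{eq:envelopebary}, $\mc{C}_N$ is compact and convex. By Straszewicz's theorem it is the closed convex hull of its exposed points, and, once (1) is established, these form $\mc{B}^N_{\mr{Lag}}$. Lemma~\ref{lem:closurebary} gives $\overline{\mc{B}^N_{\mr{Lag}}} = \mc{B}^N_{\mr{hLag}} \subset \mc{C}_N$. Moreover $\mathrm{conv}$ of a compact set in finite dimension is compact, so no extra closure is needed.

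For (1), take $Y \notin \Delta_N$. Then $F_N$ is differentiable at $Y$, so $\partial F_N(Y) = \{(\mathrm{bary}_\rho L_i^*(v,Y))_i\}$ is a singleton. Hence every element of $\mc{B}^N_{\mr{Lag}}$ is exposed. Conversely, suppose $B$ is exposed by some $Y$, i.e. $\partial F_N(Y) = \{B\}$. If $Y \notin \Delta_N$, then $B \in \mc{B}^N_{\mr{Lag}}$. If $Y \in \Delta_N$, I will show that $\partial F_N(Y)$ is the face $\mc{F}^N_{\mr{Lag}}(Y',\sigma)$ described in (2), which has positive dimension $d(N-M)$ when $M<N$. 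This contradicts exposedness, unless $Y$ is constant ($M=1$), in which case the face is all of $\mc{C}_N$.

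The core step is computing $\partial F_N(Y)$ for general $Y$. Write $Y = (y'_{\sigma(i)})_i$ with $Y' \in (\mathbb{R}^d)^M \setminus \Delta_M$ and $\sigma$ surjective. Then $\nu_N(v,Y) = \nu_M(w,Y')$, so $F_N(Y) = F_M(w,\rho;Y')$. By \eqref{eq:subgradF2}, $B \in \partial F_N(Y)$ iff $B \in \mc{C}_N$ and $\langle B,Y\rangle_v = F_N(Y)$. Take a Strassen decomposition $(\theta_i)$ of $B$ as in \eqref{eq:decomposition}, and glue it into a coupling of $\rho$ and $\nu_M(w,Y')$ exactly as in the proof of Proposition~\ref{prop:uniqueness}. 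Its cost equals $\langle B,Y\rangle_v$, so optimality plus uniqueness of the semi-discrete plan forces $\sum_{i\in\sigma^{-1}(j)} v_i\theta_i = \rho|_{L_j^*(w,Y')}$. By Strassen this is the condition defining $\mc{F}^N_{\mr{Lag}}(Y',\sigma)$. Conversely, any such $B$ attains the value, so $\partial F_N(Y) = \mc{F}^N_{\mr{Lag}}(Y',\sigma)$.

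The face is a product over $j$ of sets of type $\mc{C}_{|\sigma^{-1}(j)|}$ for the measure $\rho|_{L_j^*}/w_j$, with rescaled masses. Lemma~\ref{lem:dimension} applied to each factor gives dimension $\sum_j d(|\sigma^{-1}(j)|-1) = d(N-M)$. This requires Lemma~\ref{lem:dimension} for restricted absolutely continuous measures, which its proof covers. It also needs $\rho|_{L_j^*}$ to be in $\mc{P}_1$ after normalization, which is immediate.

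This gives (2): every proper exposed face is $\partial F_N(Y)$ for some non-constant $Y$, i.e. $M \ge 2$. Nonconstant $Y$ orthogonal to the constraint \eqref{eq: 1 is normal to C_N} gives proper faces. A constant $Y$, or a $Y$ that becomes constant after adding a direction normal to the affine hull, exposes all of $\mc{C}_N$. So I must reduce $Y$ modulo constants, using that $F_N(Y + (c,\dots,c)) = F_N(Y) + \langle c, \mathrm{bary}(\rho)\rangle$, which does not change $\sigma$.

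For (3), a point lies in the relative boundary of a closed convex set of dimension $(N-1)d$ iff it lies in some proper exposed face, by the supporting hyperplane theorem in the affine hull. Each supporting direction can be taken in the direction space of the affine hull, hence non-constant, and (2) then identifies the face.

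The main obstacle I expect is the bookkeeping here: ensuring that distinct non-equivalent $Y$ give proper faces, and that the face dimension really is $d(N-M)$, not merely at most that. The latter relies on applying Lemma~\ref{lem:dimension} to each cell $L_j^*$ carrying positive mass $w_j$.
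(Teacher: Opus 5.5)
Your proposal is correct and follows the paper's proof essentially step by step: exposed faces are identified with $\partial F_N(Y)$ via \eqref{eq:subgradF}, and the face is computed by gluing a Strassen decomposition into a coupling with $\nu_M(w,Y')$ and invoking uniqueness of the semi-discrete optimal plan. The remaining steps also match: the dimension is counted via Lemma~\ref{lem:dimension} on each cell, and the relative boundary is the union of proper exposed faces. The only deviation is that you derive \eqref{eq:envelopebary} from point (1) and Straszewicz's theorem rather than by biconjugating $\iota_{\mc{B}^N_{\mr{Lag}}}$, an alternative the paper itself mentions; your explicit necessity argument for the face, and your use of it to show $\partial F_N(Y)$ is not a singleton on $\Delta_N$, fill in steps the paper only sketches.
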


\begin{proof}
First we prove \eqref{eq:envelopebary}. By \eqref{eq:gammastar}, we observe that
\[
F_N(Y) = \sup\left\{ \sum_i \int_{L_i} \langle x,y_i\rangle \,\ed \rho(x) \; ; \; (L_i)_i \in \mc{T}^N_{\mr{Lag}}(v,\rho)\right\} \,.
\]
Hence,
\[
F_N(Y) = \sup\left\{ \langle B,Y\rangle_{v} ~~;~~B \in \mc{B}^N_\mr{Lag} \right\}
,
\] 
and the right-hand side is precisely $\iota_{\mc{B}^N_{\mr{Lag}}}^*(Y)$, so that
$F^*_N= \iota_{\mc{B}^N_{\mr{Lag}}}^{**}$. 
But $F^*_N = \iota_{\mc{C}_N}$ by Proposition \ref{prop:dualF}. Therefore
$\mc{C}_N = \mathrm{conv}(\overline{\mc{B}^N_\mr{Lag}})$.

To prove the first point, observe that by Proposition \ref{prop:dualF},
\[
\begin{aligned}
B \in \mc{B}^N_{\mr{Lag}} &\iff \exists\, Y ~:~ \{B\} =\partial F_N(Y)\\
&\iff \exists\, Y ~:~ \{B\}= \mathrm{arg}\max \left\{ \langle X,Y\rangle_{v} ~~;~~X \in \mc{C}_N \right\},
\end{aligned}
\]
which is equivalent to $B$ being an exposed point of $\mc{C}_N$. Note that equation \eqref{eq:envelopebary} can be alternatively deduced as a direct consequence of this fact.

To prove the second point, observe that the  exposed faces of $\mc{C}_N$ can be obtained as the intersection of $\mc{C}_N$ with any supporting hyperplane different from the affine hull of $\mc{C}_N$. Any such hyperplane has a normal $\tilde{Y} \in (\mathbb{R}^d)^N$ which must be different from the constant vector (defined by $\tilde{y}_i=c$ for all $i$ and $c\neq 0$). 
Therefore, there exists $Y \in (\mathbb{R}^d)^M\setminus \Delta_M$,  for some $M\in \{2,\ldots, N\}$, and $\sigma :\{1,\ldots,N\} \rightarrow \{1,\ldots,M\}$ surjective, such that $\tilde{Y} = (\tilde{y}_i)_i \in (\mathbb{R}^{d})^N$ 
is given
by $\tilde{y}_i = y_{\sigma(i)}$ for all $i$. By equation \eqref{eq:subgradF2}, 
the intersection of $\mc{C}_N$ with the supporting hyperplane of outward normal $\tilde{Y}$ is the set of points $B\in\mc{C}_N$ such that $F_N(\tilde{Y}) = \langle B, \tilde{Y}\rangle_v$.

Let $(L^*_i(w,Y))_i$ be the Laguerre tessellation with generators $Y$ and volumes $w$, defined in \eqref{eq:wdef}.
Let us take $\gamma \in \Gamma(\rho,\nu_M(w,Y))$  
to be the optimal transport plan
defined by
\[
\ed \gamma(x,y) = \sum_{i=1}^N v_i \ed \theta_i(x) \otimes \delta_{\tilde{Y}_i}(y),
\]
where $\theta_i \in \mc{P}(\mathbb{R}^d)$ satisfies 
\begin{equation}\label{eq:thetasum}
\sum_{i\in \sigma^{-1}(j)} v_i \theta_i = \rho|_{L_j^*(w,Y)}\,,
\end{equation}
for all $j=1,\ldots, M$. By construction (see Section \ref{subsec: SDOT}), 
\[
F_M(Y) = \int \langle x,y\rangle \ed \gamma(x,y)\,.
\]
Therefore,
\[
F_N(v,\rho;\tilde{Y}) = F_M(w,\rho;Y) = \sum_{i=1}^N  v_i \left\langle \int x \ed \theta_i(x) , \tilde{y}_i \right\rangle\,.
\]
This means that the vectors $B = (b_1,\ldots,b_N) \in\mc{C}_N$ belonging to the exposed face with normal $\tilde{Y}$ are precisely those satisfying
\begin{equation}
\label{eq: barycenter condition}
b_i =\int x\ed \theta_i(x)\,,
\end{equation}
for some collection of probability measures $(\theta_i)_i$ as above.
Finally, by Strassen's Theorem (see equation \eqref{eq:decomposition}), the existence of probability measures $\theta_i$ satisfying \eqref{eq:thetasum}
and \eqref{eq: barycenter condition}
is equivalent to the convex order relation 
\[
\sum_{i\in \sigma^{-1}(j)}   v_{i} \delta_{b_{i}} \preceq_C \rho|_{L_j^*(w,Y)}.
\]
This proves equation \eqref{eq:Fnlag}. Finally, by direct computation and Lemma \ref{lem:dimension} (see also Remark \ref{rem:facesoffaces} below),
\[
\mr{dim}(\mc{F}^N_{\mr{Lag}}(Y,\sigma)) = \sum_{j=1}^M d(\#\sigma^{-1}(j) -1) = d(N-M)\,.
\]

The third point follows from the fact that the relative boundary of a convex body is the union of all its proper exposed faces. 
\end{proof}

\begin{figure}
\begin{tikzpicture}[scale=1.]

% --- Convex set (filled with smooth shading) ---
\fill[color=orange!10] 
  (-2.5,-.6)
  .. controls (-2,1.2) and (-1,1.6) .. (0.2,1.4)
  .. controls (0.8,1.3) and (1.2,1.0) .. (1.5,0.6)
  -- (1.0,-.6)
  -- (-2.5,-.6)
  -- cycle;

\draw[color=orange!40] 
  (-2.5,-.6)
  .. controls (-2,1.2) and (-1,1.6) .. (0.2,1.4)
  .. controls (0.8,1.3) and (1.2,1.0) .. (1.5,0.6)
  -- (1.0,-.6);

% --- Tangent/supporting line ---
\draw[color =gray, dashed]
  (-3.,1.808) -- (2.5,1.108);

% --- Contact point ---
\fill[red] (0.08,1.42) circle (2.5pt);
\node[color =red] at (0.05,1.7) {$B$};

\node at (2.1,1.5) {$Y$};

\node[color= orange] at (-.5,0) {$\mc{C}_N(v,\rho)$};

\draw[-{Latex[length=2.6mm,width=2.2mm]}, line width = .8pt]  (1.8,1.2) -- (1.912,2.08);
\end{tikzpicture}\hspace{2em}
\begin{overpic}[scale=.55, trim=9 9 9 9, clip]{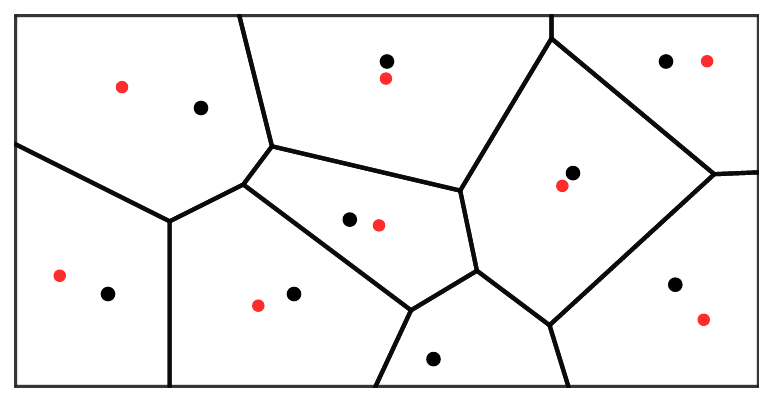}
				\put(13.5,14){$y_i$}
				\put(1.5,17){{\color{red} $b_i$}}
\end{overpic}
\vspace{-2em}
\caption{The vector of barycenters $B =(b_i)_i$ of a Laguerre tessellation with generators $Y=(y_i)_i$ and volumes $v=(v_i)_i$ (right figure) is the unique maximizer of $X \mapsto \langle X,Y\rangle_v$ on $\mc{C}_N(v,\rho)$ (left). }\label{fig:intuition}
\end{figure}

Figure \ref{fig:intuition} provides a graphical illustration of the first point in Theorem \ref{th:boundary} characterizing the exposed points of $\mc{C}_N$. 

\begin{example}
Here are two simple examples of $\mc{C}_N$, which were constructed by finding $\mc{B}^N_\mr{Lag}$
and using equation \eqref{eq:envelopebary}. 
\begin{enumerate}
    \item If $d=1$, then $\mc{C}_N$ is a convex polytope. In particular, there is no distinction between exposed and extreme points in this case. When all the volumes $v_i$ are equal, then $\mc{C}_N$ is exactly the permutahedron associated with the barycenters $b_i$ of $L_i$, where $(L_i)_i$ the unique partition of the real line $\mathbb{R}$ by disjoint intervals such that $\rho(L_i) = 1/N$.
    \item If $N=d=2$, $v_1=v_2=\frac 12$ and $\rho = \frac{1}{\pi} \chi_{B(0,1)}$ is the uniform probability distribution on the unit disk, then $\mc{C}_N(v,\rho) \subset \mathbb{R}^4$ is a two-dimensional disk. It lies in the plane spanned by $(1,0,-1,0), \,(0,1,0,-1)$, has center $0$ and radius $r=\frac{4}{3 \pi^2}$, where $(r,0)$ is the barycenter (with respect to $\rho$) of the half-disk $B(0,1) \cap \{x_1 \ge 0\}$.  
\end{enumerate}
\end{example}

\begin{remark}\label{rem:facesoffaces} Since each exposed face $\mc{F}^N_{\mr{Lag}}(Y,\sigma)$ can itself be described  by convex order relations analogous to those defining $\mc{C}_N(v,\rho)$ (see equation \eqref{eq:Fnlag}), Theorem \ref{th:boundary} also provides a precise characterization of the exposed faces of $\mc{F}^N_{\mr{Lag}}(Y,\sigma)$. Applying the theorem recursively this yields a hierarchical description of the boundary of $\mc{C}_N(v,\rho)$. To make this precise, 
using the same notation as in Theorem \ref{th:boundary},
 let us denote, for any $j=1,\ldots,M$, 
\begin{gather*}
\rho^j \coloneqq \frac{\rho|_{L^*_j(w,Y)}}{w_j}\,,  \quad \text{where} \quad w = (w_j)_{j} \in \mathbb{R}^{M}_{>0} \quad \text{with}\quad w_j \coloneqq \sum_{i\in \sigma^{-1}(j)} v_i\,,\\
N^j \coloneqq \# \sigma^{-1}(j)\,, \quad v^j \coloneqq \left(\frac{v_{i_1}}{w_j},\ldots,\frac{v_{i_{N^j}}}{w_j}\right)\,, \quad \text{where}\quad i_k\in \sigma^{-1}(j) ~\text{ and }  ~i_k<i_{k+1}\,.
\end{gather*}
Moreover,  for any $X\in (\mathbb{R}^d)^N$, we denote
\[
X^j \coloneqq \left(x_{i_1},\ldots, x_{i_{N^j}}\right) \in (\mathbb{R}^d)^{N^j} ~\quad \text{ where } ~i_k\in \sigma^{-1}(j) ~\text{ and }  ~i_k<i_{k+1}\,.
\]
Then, by the characterization of the face  $
\mc{F}
^N_{\mr{Lag}}(Y,\sigma)$ in Theorem \ref{th:boundary}, 
\begin{equation}\label{eq:characterisationF}
X =(x_1, \ldots, x_N) \in \mc{F}^N_{\mr{Lag}}(Y,\sigma) \iff X^j \in \mc{C}_{N^j}(v^j,\rho^j)\,, \quad \forall \, j=1,\ldots,M\,.
\end{equation}
In other words, the exposed face $\mc{F}
^N_{\mr{Lag}}(Y,\sigma)$ can be identified with  the Cartesian product $ \times_{j=1}^M \mc{C}_{N^j}(v^j,\rho^j)$.
\end{remark}

Combining Proposition \ref{prop:uniqueness} and Theorem \ref{th:boundary} yields the following result on extreme points. Before stating it, we recall that an extreme point of $\mc{C}_N$ is a point $B\in\mc{C}_N$ that cannot be expressed as the linear combination of two other points in $\mc{C}_N$ different from $B$.

\begin{theorem}[Extreme points] \label{th:extreme} $\mc{B}^N_{\mr{hLag}} = \overline{\mc{B}^N_{\mr{Lag}}}$ is the set of extreme points of $\mc{C}_N$.
\end{theorem}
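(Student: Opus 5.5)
We prove the two inclusions separately. The equality $\mc{B}^N_{\mr{hLag}} = \overline{\mc{B}^N_{\mr{Lag}}}$ is Lemma \ref{lem:closurebary}, so it suffices to show that $\mathrm{ext}(\mc{C}_N) = \mc{B}^N_{\mr{hLag}}$.

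\emph{Extreme points lie in $\mc{B}^N_{\mr{hLag}}$.} This follows from general convex analysis. By \eqref{eq:envelopebary}, $\mc{C}_N = \mathrm{conv}(\overline{\mc{B}^N_{\mr{Lag}}})$. The set $\mc{C}_N$ is compact, and $\overline{\mc{B}^N_{\mr{Lag}}}$ is a closed subset of it, hence compact. In finite dimension, the convex hull of a compact set is compact, and its extreme points lie in the generating set (Milman's converse, or simply: an extreme point of $\mathrm{conv}(K)$ cannot be a nontrivial convex combination of points of $K$, so it lies in $K$). Hence $\mathrm{ext}(\mc{C}_N)\subseteq \overline{\mc{B}^N_{\mr{Lag}}} = \mc{B}^N_{\mr{hLag}}$. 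An alternative route is Straszewicz's theorem: extreme points are limits of exposed points, and exposed points equal $\mc{B}^N_{\mr{Lag}}$ by Theorem \ref{th:boundary}(1).

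\emph{Every point of $\mc{B}^N_{\mr{hLag}}$ is extreme.} This is the substantive direction, and it rests on Proposition \ref{prop:uniqueness}. Take $B\in \mc{B}^N_{\mr{hLag}}$ and suppose $B = tB^0 + (1-t)B^1$ with $B^0,B^1\in\mc{C}_N$ and $t\in(0,1)$. By Strassen's Theorem in the form \eqref{eq:decomposition}, there are probability measures $(\theta^k_i)_i$, $k=0,1$, with $\sum_i v_i\theta^k_i = \rho$ and $\mr{bary}(\theta^k_i)=b^k_i$. Set $\theta_i := t\theta^0_i + (1-t)\theta^1_i$. Then $\sum_i v_i\theta_i=\rho$ and $\mr{bary}(\theta_i)=b_i$, by linearity of both constraints.

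Proposition \ref{prop:uniqueness} says the family satisfying \eqref{eq:conditions} is unique and equals $\theta_i = \rho|_{L_i}/v_i$, where $(L_i)_i$ is the hierarchical Laguerre tessellation associated with $B$. Now $\theta_i = \rho|_{L_i}/v_i$ is a normalized restriction of $\rho$ to a set, and $t\theta^0_i \le \theta_i$. We want to conclude that $\theta^0_i$ is supported in $L_i$, so the $\theta^0_i$ have disjoint supports up to null sets. Combined with $\sum_i v_i\theta^0_i=\rho$, this gives $v_i\theta^0_i \le \rho|_{L_i}$ with equal total mass $v_i$, hence $\theta^0_i=\theta_i$.

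The support claim is the one step needing care. From $t\theta^0_i\le\theta_i$ we get $\theta^0_i(\mathbb{R}^d\setminus L_i)=0$, since $\theta_i$ vanishes outside $L_i$. This is enough: the inequality only requires $\theta^0_i\ll\theta_i$ with density bounded by $1/t$, and no extremality of $\rho|_{L_i}$ among measures is needed. Therefore $b^0_i=\mr{bary}(\theta^0_i)=b_i$, and symmetrically $B^1=B$, so $B$ is extreme.

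I expect essentially no obstacle beyond this. The main point is noticing that convex combinations of points of $\mc{C}_N$ lift to convex combinations of decompositions $(\theta_i)$, so that uniqueness in Proposition \ref{prop:uniqueness} forces the two decompositions to coincide. Indeed, uniqueness alone already gives $t\theta^0_i+(1-t)\theta^1_i=\rho|_{L_i}/v_i$, from which the support argument concludes.
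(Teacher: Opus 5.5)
Your proof is correct and follows the paper's argument. The substantive inclusion is obtained exactly as in the paper: you lift the convex combination to Strassen decompositions, invoke the uniqueness in Proposition \ref{prop:uniqueness}, and your mass comparison $v_i\theta^0_i\le\rho|_{L_i}$ replaces the paper's Radon--Nikodym density argument. For the other inclusion the paper uses Straszewicz's theorem, which is your stated alternative; your main argument via \eqref{eq:envelopebary} and $\mathrm{ext}(\mathrm{conv}\,K)\subseteq K$ for compact $K$ is an equally valid and slightly more elementary variant.
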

\begin{proof}
By Lemma \ref{lem:closurebary} and Theorem \ref{th:boundary}, $\mc{B}^N_{\mr{hLag}}(v,\rho) = \overline{\mc{B}^N_{\mr{Lag}}(v,\rho)}$ is the closure of the exposed points of $\mc{C}_N(v,\rho)$. Hence, $\mc{B}^N_{\mr{hLag}}(v,\rho)\subset \mr{rel}\,\partial\mc{C}_N(v,\rho)$ and it must contain all the extreme points of $\mc{C}_N(v,\rho)$ by Straszewicz's Theorem \cite{straszewicz1935exponierte}, which states that the set of exposed points of a compact convex set is dense in the set of its extreme points.

It remains to prove the converse inclusion. Let
\(B=(b_1,\ldots,b_N)\in \mc{B}^N_{\mr{hLag}}\), and let
\((L_i)_{i=1}^N\in \mc{T}^N_{\mr{hLag}}(v,\rho)\) be the corresponding hierarchical Laguerre tessellation. Thus
\[
b_i = \frac{1}{v_i}\int_{L_i} x\,\ed\rho(x),
\qquad
\rho(L_i)=v_i.
\]
By Proposition~\ref{prop:uniqueness}, the collection
$\bar\theta_i \coloneqq \frac{1}{v_i}\rho|_{L_i}$ is uniquely characterized by the conditions
\begin{equation} \label{eq:thetai}
\sum_{i=1}^N v_i\bar\theta_i=\rho,
\qquad
\int_{\mathbb{R}^d} x\,\ed\bar\theta_i(x)=b_i\,.
\end{equation}
Suppose now that \(B\) is not extreme. Then there exist
$X=(x_1,\ldots,x_N),Z=(z_1,\ldots,z_N)\in \mc{C}_N$ and some
$\lambda\in(0,1)$ such that $B=(1-\lambda)X+\lambda Z$.
By Strassen's Theorem, applied in the form of \eqref{eq:decomposition}, there exist probability measures
$(\theta_i^X)_i$ and $(\theta_i^Z)_i$ such that
\[
\begin{aligned}
\sum_{i=1}^N v_i\theta_i^X=\rho, 
\qquad \int_{\mathbb{R}^d} x\,\ed\theta_i^X(x)=x_i, \\
\sum_{i=1}^N v_i\theta_i^Z=\rho,
\qquad \int_{\mathbb{R}^d} x\,\ed\theta_i^Z(x)=z_i.
\end{aligned}\]
Define $\theta_i \coloneqq (1-\lambda)\theta_i^X+\lambda\theta_i^Z.$
Then each \(\theta_i\) is a probability measure, and it satisfies  \eqref{eq:thetai}. Hence, by uniqueness, $\theta_i=\bar\theta_i$ for all $i$. We now show that this forces \(X=Z=B\). Since
$(1-\lambda)\theta_i^X+\lambda\theta_i^Z=\bar\theta_i$,
and all measures involved are nonnegative, and since $\lambda \neq 1$, this implies that $\theta_i^X$ is absolutely continuous with respect to \(\bar\theta_i\). Let $a_i^X$ be the density of $\theta_i^X$ with respect to $\bar\theta_i$, so that $v_i\theta_i^X = a_i^X\,\rho|_{L_i}$. Then, using  $\sum_i v_i\theta_i^X=\rho$, we obtain
$$
\rho
=
\sum_{i=1}^N a_i^X\,\rho|_{L_i}.
$$
Since the cells \(L_i\) form a partition up to a \(\rho\)-negligible set, it follows that
$a_i^X$ coincide with the indicator function of $L_i$ $\rho$-a.e. This implies that for all $i$, $
\theta_i^X=\bar\theta_i$. Thus,
$$ x_i = \int_{\mathbb{R}^d} x\,\ed\theta_i^X(x) = \int_{\mathbb{R}^d} x\,\ed\bar\theta_i(x) = b_i. $$
This implies that $X=B$ and similarly that $Z = B$, thus showing that $B$ is extreme.
\end{proof}

\subsection{Irreducible Laguerre tessellations and maximal faces} 
We say that a Laguerre tessellation $(L_i)_{i=1}^N$ is \emph{irreducible} if there does not exist any other Laguerre tessellation $(\tilde{L}_j)_{j=1}^M$ with $1<M<N$ whose cells $\tilde{L}_j$ can be expressed as a union of those from $(L_i)_{i=1}^N$; see Figure \ref{fig:irreducible}. We denote by $\mc{T}^N_{\mr{iLag}}(v,\rho)$ the set of irreducible tessellations in $\mc{T}^N_{\mr{Lag}}(v,\rho)$. In this section, we study the set of barycenters of such tessellations defined as follows:
\[
\mc{B}^N_{\mr{iLag}}(v,\rho) \coloneqq \{ B \in (\mathbb{R}^{d})^N ~;~ \exists\, (L_i)_i \in \mc{T}^N_{\mr{iLag}}(v,\rho) \text{ such that } b_i = \mathrm{bary}_{\rho}(L_i) ~ \forall \, i\}\,.
\]

Specifically, we show the following result:
\begin{theorem}\label{th:interior} 
Let $\exp(\mc{C}_N(v,\rho))$ denote the set of exposed points of $\mc{C}_N(v,\rho)$. 
Then $\mc{B}^N_{\mr{iLag}}(v,\rho)$ is the interior of $\mc{B}^N_{\mr{Lag}}(v,\rho)= \exp(\mc{C}_N(v,\rho)) $ in  $\mr{rel}\, \partial \mc{C}_N(v,\rho)$.
\end{theorem}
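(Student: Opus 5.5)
We argue through the normal-cone structure given by Proposition~\ref{prop:dualF} and Theorem~\ref{th:boundary}. For $B\in\mc{B}^N_{\mr{Lag}}$ with $B=\nabla F_N(Y)$, $Y\in(\mathbb{R}^d)^N\setminus\Delta_N$, we consider the set of normals
\[
N(B)\coloneqq\{\tilde Y\,;\,B\in\partial F_N(\tilde Y)\}.
\]
By \eqref{eq:subgradF2} and the description of the faces in Theorem~\ref{th:boundary}, a point $B'\in\mr{rel}\,\partial\mc{C}_N$ near $B$ lies in some face $\mc{F}^N_{\mr{Lag}}(Y',\sigma)$ with $M<N$ if and only if $B'$ admits a degenerate normal $\tilde Y=(y'_{\sigma(i)})_i\in\Delta_N$. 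The plan is to show two things. First, $B$ is irreducible iff $N(B)$, modulo the trivial directions (constants, and the positive scaling and translation invariance), contains no element of $\Delta_N$. Second, this condition is equivalent to $B$ having a relative neighbourhood in $\mr{rel}\,\partial\mc{C}_N$ made only of exposed points.

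\textbf{Step 1 (reducible implies not interior).} Suppose $(L_i)_i$ is reducible. Then there is a coarser Laguerre tessellation $(\tilde L_j)_{j=1}^M$, $1<M<N$, with generators $Z\in(\mathbb{R}^d)^M\setminus\Delta_M$ and a surjection $\sigma$. By \eqref{eq:Fnlag}, $B$ lies in the face $\mc{F}^N_{\mr{Lag}}(Z,\sigma)$. This face has dimension $d(N-M)>0$ and is identified with $\times_j\mc{C}_{N^j}(v^j,\rho^j)$ (Remark~\ref{rem:facesoffaces}). In these product coordinates $B$ corresponds to barycenters of Laguerre tessellations of each $\rho^j$, restricting the $L_i$. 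The face contains points arbitrarily close to $B$ that are not exposed points of $\mc{C}_N$: any point of $\mc{F}^N_{\mr{Lag}}(Z,\sigma)$ lies in a proper face of positive dimension and so cannot be exposed unless $\dim=0$. Concretely, move slightly inside one factor $\mc{C}_{N^j}$ along a segment toward its relative interior, for instance toward the barycenter vector of $\rho^j$. This gives non-exposed boundary points converging to $B$, so $B$ is not in the relative interior of $\mc{B}^N_{\mr{Lag}}$.

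\textbf{Step 2 (irreducible implies interior).} Let $B$ be irreducible with generators $Y$. We want a neighbourhood $U$ of $B$ with $U\cap\mr{rel}\,\partial\mc{C}_N\subset\mc{B}^N_{\mr{Lag}}$. We argue by contradiction. Take $B^n\to B$ in $\mr{rel}\,\partial\mc{C}_N$ with $B^n\notin\mc{B}^N_{\mr{Lag}}$. Each $B^n$ has an outward normal $\tilde Y^n$ (Theorem~\ref{th:boundary}(3)). Since $B^n$ is not exposed, we may choose $\tilde Y^n\in\Delta_N$: the point lies in some $\mc{F}^N_{\mr{Lag}}(Y^n,\sigma_n)$, and we choose $\sigma_n$ with $M_n<N$, which is possible because otherwise every face through $B^n$ would be a point. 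Normalise $\tilde Y^n$ modulo constants and to unit $\|\cdot\|_v$-norm, and extract a limit $\tilde Y$ with $\sigma_n\equiv\sigma$ fixed.

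Closedness of the graph of $\partial F_N$ gives $B\in\partial F_N(\tilde Y)$. Since $\tilde Y$ is a limit of vectors with $\tilde y_i=\tilde y_k$ whenever $\sigma(i)=\sigma(k)$, it keeps these coincidences, so $\tilde Y\in\Delta_N$, and it is nonconstant after normalisation. By Theorem~\ref{th:boundary}(2), $B$ lies in a face $\mc{F}^N_{\mr{Lag}}(Y',\sigma')$ with $M'<N$. Here $M'\ge2$ because $\tilde Y$ is nonconstant. Proposition~\ref{prop:uniqueness} then forces the unions $\bigcup_{i\in\sigma'^{-1}(j)}L_i$ to coincide with the Laguerre cells $L_j^*(w,Y')$, which makes $(L_i)$ reducible. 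This is a contradiction.

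\textbf{Main obstacle.} The delicate point is the uniqueness argument at the end of Step 2: turning $B\in\mc{F}^N_{\mr{Lag}}(Y',\sigma')$ into the statement that the unions of the cells $L_i$ are exactly the Laguerre cells $L_j^*$. This should follow as in Proposition~\ref{prop:uniqueness}, using uniqueness of the optimal plan from $\rho$ to $\nu_M(w,Y')$ applied to $\theta_i=\rho|_{L_i}/v_i$.

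A second point needing care is the normalisation in Step 2. We must ensure the limit $\tilde Y$ is not constant, so that we do not land on the trivial normal of the affine hull. We handle this by projecting onto the orthogonal complement of the constants in $\langle\cdot,\cdot\rangle_v$ before normalising.
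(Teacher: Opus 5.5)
Your proposal is correct and is essentially the paper's proof: Step~2 is the paper's contradiction argument (degenerate normals $\tilde Y^n\in\Delta_N$ normalised modulo constants, a non-constant limit $\tilde Y\in\Delta_N$ exposing $B$, then Proposition~\ref{prop:uniqueness}, which is exactly what Proposition~\ref{prop:maximal} packages), and Step~1 spells out the paper's terse claim that $\mc{B}^N_{\mr{Lag}}\setminus\mc{B}^N_{\mr{iLag}}$ lies in the boundary of $\mc{B}^N_{\mr{Lag}}$ within $\mr{rel}\,\partial\mc{C}_N$. One slip to fix in Step~1: it is false that every point of a positive-dimensional face is non-exposed, since $B$ itself is exposed; your concrete points $(1-\varepsilon)B+\varepsilon C$ are nonetheless non-exposed, but the reason is that they lie in the open segment between $B$ and $C\neq B$, where $C$ replaces the $j$-th block by the constant barycenter of $\rho^j$ (and $C\neq B$ holds because, for $N^j\geq 2$, $B^j$ is an exposed point of $\mc{C}_{N^j}(v^j,\rho^j)$, hence lies in its relative boundary).
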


Before proving Theorem \ref{th:interior}, we describe the relation between $\mc{B}^N_{\mr{iLag}}$ and the convex set $\mc{C}_N$. First, we recall that a maximal face of a convex set is a proper face that is maximal under inclusion, i.e., it is not a strict subset of any other proper face. 

\begin{proposition}\label{prop:maximal} The following are equivalent:
\begin{enumerate}
    \item $B \in \mc{B}^N_{\mr{iLag}}(v,\rho)$;
    \item $\{B\} \subset \mc{C}_N(v,\rho)$ is a maximal face.
\end{enumerate}
\end{proposition}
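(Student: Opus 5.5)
We will prove both implications through the description of exposed faces in Theorem~\ref{th:boundary} together with the nesting of faces in Remark~\ref{rem:facesoffaces}. The basic dictionary is the following. If $Y\in(\mathbb{R}^d)^N\setminus\Delta_N$, the exposed face of $\mc{C}_N$ with normal $Y$ is $\{B\}$, where $B$ is the barycenter vector of $L^*(v,Y)$. If instead $Y$ is a coarse normal, i.e.\ $\tilde y_i = y_{\sigma(i)}$ with $M<N$, the exposed face is $\mc{F}^N_{\mr{Lag}}(Y,\sigma)$, of dimension $d(N-M)>0$. The point $B$ lies in this face exactly when, for every $j$, the cells $\{L_i\}_{i\in\sigma^{-1}(j)}$ partition $L^*_j(w,Y)$ up to null sets. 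This follows from the uniqueness in Proposition~\ref{prop:uniqueness} combined with \eqref{eq:thetasum}.

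\emph{(1)$\Rightarrow$(2).} Let $B\in\mc{B}^N_{\mr{iLag}}$. By Theorem~\ref{th:boundary}, $\{B\}$ is an exposed face, hence a proper face. Suppose $\{B\}\subsetneq\mc{F}$ for some proper face $\mc{F}$. Since $\mc{C}_N$ is compact and convex, every proper face lies in a proper exposed face. Therefore $B\in\mc{F}^N_{\mr{Lag}}(Y,\sigma)$ for some $M\ge 2$ and some $Y$ with distinct entries.

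If $M<N$, the characterization above shows that the cells of $(L_i)_i$ can be grouped into the Laguerre tessellation $(L^*_j(w,Y))_j$ with $1<M<N$ cells. This contradicts irreducibility.

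If $M=N$, the exposed face is a singleton. It must equal $\{B\}$, so it cannot contain $\mc{F}$. Hence $\{B\}$ is maximal.

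\emph{(2)$\Rightarrow$(1).} Suppose $\{B\}$ is a maximal face. Every proper face is contained in a proper exposed face, so maximality forces $\{B\}$ itself to be exposed. By Theorem~\ref{th:boundary}, $B\in\mc{B}^N_{\mr{Lag}}$, with tessellation $(L_i)_i=L^*(v,Y)$.

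Assume $(L_i)_i$ is reducible. Then there is a Laguerre tessellation $(\tilde L_j)_{j=1}^M$ with $1<M<N$ and a surjection $\sigma$ such that $\tilde L_j=\bigcup_{i\in\sigma^{-1}(j)}L_i$. Let $\tilde Y$ be its generators. The tessellation $(\tilde L_j)_j$ then has volumes $w_j=\sum_{i\in\sigma^{-1}(j)}v_i$, so it coincides with $L^*(w,\tilde Y)$.

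Consequently $B\in\mc{F}^N_{\mr{Lag}}(\tilde Y,\sigma)$: for each $j$, take $\theta_i=\rho|_{L_i}/v_i$ for $i\in\sigma^{-1}(j)$, and use Strassen's theorem. This exposed face is proper and has dimension $d(N-M)>0$, so it strictly contains $\{B\}$. That contradicts maximality, and hence $B\in\mc{B}^N_{\mr{iLag}}$.

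The main obstacle is the fact used in both directions: every proper face of $\mc{C}_N$ is contained in a proper exposed face. For a finite-dimensional compact convex set this is standard. Take a point in the relative interior of the face; it lies on the relative boundary of $\mc{C}_N$, so some supporting hyperplane passes through it. That hyperplane then contains the whole face, since the face's relative interior meets it.

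A secondary point to check carefully is the direction "$B$ in the coarse face $\Rightarrow$ the cells refine $L^*(w,Y)$" in the step (1)$\Rightarrow$(2). For this we use \eqref{eq:thetasum} with the unique $\theta_i=\rho|_{L_i}/v_i$ from Proposition~\ref{prop:uniqueness}. It gives $\sum_{i\in\sigma^{-1}(j)}\rho|_{L_i}=\rho|_{L^*_j(w,Y)}$, so each $L_i$ with $i\in\sigma^{-1}(j)$ lies in $L^*_j(w,Y)$ up to null sets. Since both families are convex tessellations, the unions agree as closed convex sets. This exhibits the required coarser Laguerre tessellation and completes the argument.
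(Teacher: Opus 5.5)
Your argument follows the paper's proof. Theorem~\ref{th:boundary} identifies the exposed faces containing $B$, and Proposition~\ref{prop:uniqueness} forces the Strassen decomposition attached to a coarse face $\mc{F}^N_{\mr{Lag}}(Y,\sigma)$ to consist of the cells of the given tessellation. You also prove a fact the paper uses tacitly in its first sentence: every proper face of $\mc{C}_N$ lies in a proper exposed face. Your direction (2)$\Rightarrow$(1) is fine as written.

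The step that is wrong is the last sentence of your (1)$\Rightarrow$(2) argument. From $\sum_{i\in\sigma^{-1}(j)}\rho|_{L_i}=\rho|_{L^*_j(w,Y)}$ you only get equality of the cells $\rho$-a.e. The phrase ``both families are convex tessellations'' does not upgrade this to equality of sets unless $\rho>0$ Lebesgue-a.e.; in that case your interior/closure reasoning does work. Here is a counterexample with $\rho$ uniform on $[0,1]^2$:
\begin{itemize}
\item Take generators $(0,0),(1,0),(2,\epsilon)$, weights $(0,\frac13,1)$ and $\epsilon=\frac1{100}$. This gives $L_1=\{x_1\le\frac13,\ 2x_1+\epsilon x_2\le1\}$, $L_2=\{x_1\ge\frac13,\ x_1+\epsilon x_2\le\frac23\}$ and $L_3=\{x_1+\epsilon x_2\ge\frac23,\ 2x_1+\epsilon x_2\ge1\}$.
\item These are three wedges of angle $<\pi$ meeting at $(\frac13,\frac1{3\epsilon})$, far from the square. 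No union of cells is a half-plane, so $(L_i)_i$ is irreducible as a tessellation of $\mathbb{R}^2$.
\item On the square, $L_1\cap[0,1]^2=\{x_1\le\frac13\}\cap[0,1]^2$ and $L_2\cup L_3$ fills the rest. Hence $B\in\mc{F}^3_{\mr{Lag}}(Y,\sigma)$ with $Y=((0,0),(1,0))$ and $\sigma=(1,2,2)$.
\item That face has dimension $2$, so $\{B\}$ is not a maximal face, although $B$ is a barycenter vector of a literally irreducible tessellation.
\end{itemize}

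So (1)$\Rightarrow$(2), and with it your set-level conclusion, holds only when tessellations are identified modulo $\rho$-null sets. The paper does this implicitly by viewing tessellations in $(L^\infty(\rho))^N$; note that the uniqueness in Proposition~\ref{prop:uniqueness} is also only $\rho$-a.e. Under that reading, the identity $\sum_{i\in\sigma^{-1}(j)}\rho|_{L_i}=\rho|_{L^*_j(w,Y)}$ is already the reducibility you need. Simply drop the claim that the unions agree as closed convex sets.
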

\begin{proof}
    $\{B\}$ is a maximal face if and only if $B$ is an exposed point that does not belong to any proper exposed face other than $\{B\}$. The result then follows as a direct consequence of Theorem \ref{th:boundary} and Proposition \ref{prop:uniqueness}. More precisely,  
    suppose that $\{B\}$ is a maximal face and $B \notin \mc{B}^N_{\mr{iLag}}(v,\rho)$. Then, by definition of $\mc{B}^N_{\mr{iLag}}$, there exists $M<N$, $Y \in (\mathbb{R}^d)^M \setminus \Delta_M $ and $\sigma : \{1,\ldots,N\} \to \{ 1, \ldots ,M\}$ such that $B \in \mc{F}^N_{\mr{Lag}}(Y,\sigma)$ (defined in equation \eqref{eq:Fnlag}). But then, by Theorem \ref{th:boundary},
    $B$ would be belong to an exposed face other than $\{B\}$. The converse holds by a similar argument 
    since, for any $B\in \mc{B}^N_{\mr{iLag}}(v,\rho)$, by Proposition \ref{prop:uniqueness}, there is a unique tessellation associated with it, which is in $\mc{T}^N_{\mr{iLag}}$ by construction.
\end{proof}

\begin{figure}
    \begin{tikzpicture}

% Draw the first square with thick lines
\fill[cyan!40,  fill opacity=0.5] (0,0) rectangle (3,3);
\draw[thick] (1.5,-.5) -- (1.5,3.5);
\draw[thick] (-.5,1.5) -- (3.5,1.5);

\fill[black] (.75,2.25) circle (2pt); % 
\node[above] at (.75,2.25) {$b_1$};
\fill[black] (2.25,2.25) circle (2pt); % 
\node[above] at (2.25,2.25) {$b_2$};
\fill[black] (2.25,.75) circle (2pt); % 
\node[above] at (2.25,.75) {$b_3$};
\fill[black] (.75,.75) circle (2pt); % 
\node[above] at (.75,.75) {$b_4$};

% Draw the second square with thick lines
\fill[cyan!40,  fill opacity=0.5] (5,0) rectangle (5+3,3);
\draw[thick] (5+1.5,-.5) -- (5+1.5,3.5);
\fill[red] (5+.75,1.5)circle (2pt); % 
\node[above,red] at (5+.75,1.5) {$y_1=y_4$};
\fill[red] (5+2.25,1.5) circle (2pt); % 
\node[above,red] at (5+2.25,1.5) {$y_2=y_3$};
% Connect the points with lines
% Draw the second square with thick lines
\fill[cyan!40, fill opacity=0.5] (10,0) rectangle (10+3,3);
\draw[thick] (10-.5,1.5) -- (10+3.5,1.5);
\fill[red] (10+1.5,2.25)circle (2pt); % 
\node[above,red] at (10+1.5,2.25) {$y_1=y_2$};
\fill[red] (10+1.5,.75)circle (2pt); % 
\node[above,red] at (10+1.5,.75) {$y_3=y_4$};
\end{tikzpicture}
    
    \vspace{-2em}
    \caption{The figure on the left shows an example of a Laguerre tessellation that is not irreducible. For this tessellation, the vector of barycenters $B = (b_1, b_2, b_3, b_4)$, corresponding to $\rho$ uniform on the square and $v_1=v_2=v_3=v_4$, lies on two distinct exposed faces of $\mc{C}_4(v,\rho)$, each of dimension strictly greater than zero. These faces are exposed by the hyperplanes with normal vectors $Y = (y_1, y_2, y_3, y_4)$ shown in the figure (center and right).}
    \label{fig:irreducible}
\end{figure}

\begin{proof}[Proof of Theorem \ref{th:interior}] It suffices to show that $\mc{B}^N_{\mr{iLag}}(v,\rho)$ is open in $\mr{rel}\, \partial \mc{C}_N(v,\rho)$. In fact, by construction $\mc{B}^N_{\mr{iLag}}\subseteq \mc{B}^N_{\mr{Lag}}$, and by Proposition \ref{prop:maximal}, $\mc{B}^N_{\mr{Lag}}\setminus \mc{B}^N_{\mr{iLag}} \subseteq \partial \mc{B}^N_{\mr{Lag}}$, 
the boundary of $\mc{B}^N_{\mr{Lag}}$ in $\mr{rel}\, \partial \mc{C}_N(v,\rho)$.

Let us suppose that $\mc{B}^N_{\mr{iLag}}(v,\rho)$ is not open in $\mr{rel}\, \partial \mc{C}_N(v,\rho)$. Then, for some $B \in \mc{B}^N_{\mr{iLag}}(v,\rho)$, there exists a sequence of points $(B^n)_n \subset \mr{rel}\, \partial \mc{C}_N(v,\rho) \setminus \mc{B}^N_{\mr{iLag}}(v,\rho)$ such that $B^n \rightarrow B$ as $n\rightarrow \infty$. By Proposition \ref{prop:maximal} and Theorem \ref{th:boundary}, there exists a sequence of vectors $(Y^n)_n \subset \Delta_N \setminus\{Y = (y, \ldots,y)\; ;\; y\in \mathbb{R}^d\}$ such that
\[
B^n \in \mr{argmax}\{ \langle Y^n, X\rangle_v \;; \; X \in \mc{C}_N\}.
\]
We can suppose that each vector $Y^n$ is  normalized so that $y^n_1=0$ and $\max_{i\neq k} |y_i^n-y_k^n| =1$. Hence, up to the extraction of a subsequence, $Y^n \rightarrow Y^\infty \in \Delta_N \setminus\{Y = (y, \ldots,y)\; ;\; y\in \mathbb{R}^d\}$. Moreover,
\[
B\in \mr{argmax}\{ \langle Y^\infty, X\rangle_v \;; \; X \in \mc{C}_N\}.
\]
But then, reasoning as in the proof of Theorem \ref{th:boundary}, this means that $\{B\}$ cannot be a maximal face, which is a contradiction by Proposition \ref{prop:maximal}.
\end{proof}

\section{Semi-discrete Wasserstein projections in convex order} \label{sec:projection} In this section  we describe how the $L^2$-projection of a vector $X \in (\mathbb{R}^d)^N$ onto $\mc{C}_N(v,\rho)$ can be reinterpreted as a Wasserstein projection of the discrete measure $\nu_N(v,X)$ onto the set of measures $\mu \preceq_C \rho$. While this problem is of independent interest, it will be instrumental to our approach to 
reconstructing a Laguerre tessellation from its cell volumes and barycenters, discussed in the  following sections.

Let us define the following weighted orthogonal projection
\begin{equation}\label{eq:l2projection}
P_{\mc{C}_N}(X) \coloneqq \mr{argmin}\{ \| B- X\|_{v}^2 ~;~ B \in \mc{C}_N \}.
\end{equation}
Note that since $\mc{C}_N$ is a bounded, closed and convex set, this problem is well-defined and admits a unique solution. 

It will be useful to consider its dual (with respect to the duality expressed by the inner product $\langle \cdot, \cdot\rangle_v$), which 
can be written
as follows:

\begin{lemma}\label{lem:dual} Let $X\in (\mathbb{R}^d)^N$. The following holds:
\begin{equation}\label{eq:dualityproj}
\inf_{B {\in (\mathbb{R}^d)^N}} \left\{ \frac{\| B-X\|^2_{v}}{2} + \iota_{\mc{C}_N}(B)\right\} = - \inf_{Y {\in (\mathbb{R}^d)^N}} \left\{\frac{\| Y\|^2_{v}}{2} - \langle Y,X\rangle_{v} + F_N(v,\rho;Y)\right\}.
\end{equation}
Moreover the problems on the left- and right-hand sides admit  
unique solutions, denoted $P_{\mc{C}_N}(X)$ and $Y^*$ respectively, which 
{satisfy}
\begin{equation}\label{eq:ypcx}
Y^*  = X-P_{\mc{C}_N}(X)\,.
\end{equation}
\end{lemma}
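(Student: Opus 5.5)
The plan is to apply Fenchel--Rockafellar duality in the Hilbert space $((\mathbb{R}^d)^N,\langle\cdot,\cdot\rangle_v)$ to the primal problem $\inf_B\{f(B)+g(B)\}$ with $f(B)=\frac12\|B-X\|_v^2$ and $g=\iota_{\mc{C}_N}$. All conjugates are taken with respect to $\langle\cdot,\cdot\rangle_v$, as in Proposition~\ref{prop:dualF}.

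\emph{Conjugates.} A direct completion of squares gives
\[
f^*(Y)=\tfrac12\|Y\|_v^2+\langle Y,X\rangle_v .
\]
Proposition~\ref{prop:dualF} gives $g^*=\iota_{\mc{C}_N}^*=F_N$, since $F_N$ is the support function of $\mc{C}_N$. The duality formula reads
\[
\inf_B\{f(B)+g(B)\}=-\inf_Y\{f^*(-Y)+g^*(Y)\}=-\inf_Y\Big\{\tfrac12\|Y\|_v^2-\langle Y,X\rangle_v+F_N(Y)\Big\},
\]
which is exactly \eqref{eq:dualityproj}. Strong duality (zero gap) holds because $f$ is finite and continuous everywhere and $g$ is proper, convex and lower semicontinuous ($\mc{C}_N$ is nonempty, closed and convex, as shown in the basic properties of $\mc{C}_N$). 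This qualification condition is immediate in finite dimensions, and it also guarantees that the dual infimum is attained.

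\emph{Existence and uniqueness.} The primal problem has a unique minimizer $P_{\mc{C}_N}(X)$, since $\mc{C}_N$ is nonempty, compact and convex and $f$ is strictly convex. The dual objective $\frac12\|Y\|_v^2-\langle Y,X\rangle_v+F_N(Y)$ is the sum of a strongly convex quadratic and the convex, continuous, finite function $F_N$ (Proposition~\ref{prop:dualF}). Hence it is coercive and strictly convex, so it has a unique minimizer $Y^*$.

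\emph{The relation \eqref{eq:ypcx}.} I would read this off the optimality conditions. The first-order condition for the dual problem is $0\in Y^*-X+\partial F_N(Y^*)$, that is, $X-Y^*\in\partial F_N(Y^*)$. By \eqref{eq:subgradF}, the point $B^*:=X-Y^*$ lies in $\mc{C}_N$ and maximizes $\langle Y^*,\cdot\rangle_v$ over $\mc{C}_N$. Therefore
\[
\langle X-B^*,B-B^*\rangle_v\le 0\quad\text{for all } B\in\mc{C}_N,
\]
which is the variational characterization of the projection onto a closed convex set. It follows that $B^*=P_{\mc{C}_N}(X)$, and hence $Y^*=X-P_{\mc{C}_N}(X)$.

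The only point needing care is that every conjugation and subdifferential must be taken consistently with respect to the weighted inner product $\langle\cdot,\cdot\rangle_v$. Apart from that, the argument is a routine application of convex duality, with the identification $\iota_{\mc{C}_N}^*=F_N$ from Proposition~\ref{prop:dualF} doing the substantive work.
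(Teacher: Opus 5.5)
Your proof is correct and follows essentially the same route as the paper: both rest on the identification $\iota_{\mc{C}_N}^*=F_N$ from Proposition~\ref{prop:dualF} and on the first-order condition $X-Y^*\in\partial F_N(Y^*)$. There are two minor differences. You obtain the equality of optimal values from the Fenchel--Rockafellar theorem and its qualification condition, whereas the paper reads it off the Fenchel--Young equality $F_N(Y^*)+\iota_{\mc{C}_N}(B^*)=\langle B^*,Y^*\rangle_v$ at the optimal pair. You also identify $X-Y^*$ with $P_{\mc{C}_N}(X)$ through the projection variational inequality, where the paper matches it against the primal optimality condition.
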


 \begin{proof} 
The result is classical since the right-hand side of \eqref{eq:dualityproj} is just the Legendre transform of the sum $Y\mapsto F_N(Y) + \|Y\|^2_v/2$, which is given by the inf-convolution of the Legendre transforms of $F_N$ and $ \| \cdot\|^2_v/2$. We detail the proof for clarity.

Both problems admit a unique solutions by the strong convexity and lower-semicontinuity of the objective functions. Moreover, $B^*= P_{\mc{C}_N}(X)$ is the unique solution of the problem on the left-hand side of \eqref{eq:dualityproj} if and only if 
\begin{align}
\nonumber
0 \in \partial \left( B \mapsto \frac{\| B - X \|_v^2}{2} +  \iota_{\mc{C}_N}(B) \right) (B^*)
\quad & \Longleftrightarrow
\quad 
X - B^* \in \partial \iota_{\mc{C}_N}(B^*)
\\
\label{eq:opt condition 1}
& \Longleftrightarrow
\quad
B^* \in \partial F_N(X-B^*)\,,
\end{align}
since $F_N^* = \iota_{\mc{C}_N}$, by Proposition~\ref{prop:dualF}. Similarly, 
$Y^*$ is the unique solution of the problem on the right-hand side of \eqref{eq:dualityproj} if and only if 
\begin{equation}
\label{eq:opt condition 2}   
0 \in \partial \left( Y \mapsto
\frac{\| Y\|^2_{v}}{2} - \langle Y,X\rangle_{v} + F_N(Y)
\right) (Y^*)
\quad \Longleftrightarrow \quad
X - Y^* \in \partial F_N(Y^*).
\end{equation}
Then equation \eqref{eq:ypcx} follows from \eqref{eq:opt condition 1} and \eqref{eq:opt condition 2}, and moreover 
$B^* \in \partial F(Y^*)$. Equation \eqref{eq:dualityproj} follows since, by Fenchel duality and equation \eqref{eq:ypcx},
\[
F_N(Y^*) + \iota_{\mc{C}_N}(B^*) = \langle B^*, Y^*\rangle_v = -\frac{\| B^*-X\|^2_v}{2} - \frac{\|Y^*\|^2_v}{2} + \langle Y^*,X\rangle_v\,.
\]
\end{proof}

We now show the equivalence between problem \eqref{eq:l2projection} and 
the
2-Wasserstein projection problem \eqref{eq:projection}. In order to state this, let us first recall the definition of the 2-Wasserstein distance $W_2$: 
\begin{equation}\label{eq:w2}
W_2^2(\mu,\nu) \coloneqq \inf_{\gamma \in \Gamma(\mu,\nu)}\left\{ \int_{\mathbb{R}^d \times \mathbb{R}^d} 
|x-y|^2 \, \ed \gamma(x,y) \right\}
\end{equation}
for any $\mu,\nu \in \mc{P}_2(\mathbb{R}^d)$ probability measures with finite second moments, and 
where $\Gamma(\mu,\nu) \in \mc{P}(\mathbb{R}^d\times\mathbb{R}^d)$ is the set of probability measures with first and second marginals equal to $\mu$ and $\nu$, respectively.

\begin{proposition}\label{prop:w2proj} Let $X\in (\mathbb{R}^d)^N$. The measure 
$\nu_N(v,P_{\mc{C}_N}(X))$
is the unique solution 
of
the projection problem
\begin{equation}\label{eq:projW2}
\inf \{ W^2_2(\nu,\nu_N(v,X))\,;\, \nu \preceq_C \rho\}.
\end{equation}
\end{proposition}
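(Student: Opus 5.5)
The strategy is to bound the objective of \eqref{eq:projW2} from below by the weighted distance from $X$ to $\mc{C}_N$, then show that the bound is attained only by $\nu_N(v,P_{\mc{C}_N}(X))$. The two ingredients are a reduction to discrete competitors (a barycentric projection) and Strassen's theorem in the form \eqref{eq:decomposition}.

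\textbf{Lower bound.} Fix an admissible $\nu \preceq_C \rho$. Note that $\nu$ automatically has finite second moment whenever $\rho$ does, since $|x|^2$ is convex; we assume this so that $W_2$ is defined. Take an optimal plan $\gamma \in \Gamma(\nu,\nu_N(v,X))$ and disintegrate it with respect to the second (discrete) marginal as
\[
\gamma = \sum_i v_i\, \mu_i \otimes \delta_{x_i}, \qquad \sum_i v_i \mu_i = \nu .
\]
Set $b_i \coloneqq \mr{bary}(\mu_i)$ and $B = (b_i)_i$. By Jensen's inequality applied to each $\mu_i$,
\[
W_2^2(\nu,\nu_N(v,X)) = \sum_i v_i \int |y - x_i|^2 \,\ed\mu_i(y) \;\geq\; \sum_i v_i |b_i - x_i|^2 = \|B - X\|_v^2 .
\]

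\textbf{Admissibility of $B$.} I claim $\nu_N(v,B) \preceq_C \nu$. For any convex $\varphi$, Jensen gives $\sum_i v_i \varphi(b_i) \le \sum_i v_i \int \varphi \,\ed\mu_i = \int \varphi \,\ed\nu$. By transitivity of convex order, $\nu_N(v,B) \preceq_C \rho$, so $B \in \mc{C}_N$. Hence
\[
W_2^2(\nu,\nu_N(v,X)) \;\ge\; \|B - X\|_v^2 \;\ge\; \|P_{\mc{C}_N}(X) - X\|_v^2 .
\]

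\textbf{Attainment.} The measure $\nu_N(v,P_{\mc{C}_N}(X))$ is admissible by definition of $\mc{C}_N$. Coupling $P_{\mc{C}_N}(X)_i$ with $x_i$ for each $i$ shows $W_2^2 \le \|P_{\mc{C}_N}(X) - X\|_v^2$, so this measure attains the infimum.

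\textbf{Uniqueness.} This is the delicate step. Suppose $\nu$ is optimal. Then both inequalities above are equalities.
\begin{itemize}
\item Equality in the second inequality, together with uniqueness of the projection (strict convexity of $\|\cdot\|_v^2$ on the convex set $\mc{C}_N$), gives $B = P_{\mc{C}_N}(X)$.
\item Equality in Jensen's inequality, $\int |y - x_i|^2 \,\ed\mu_i = |b_i - x_i|^2$, forces $\mu_i$ to have zero variance, i.e.\ $\mu_i = \delta_{b_i}$.
\end{itemize}
Therefore $\nu = \sum_i v_i \delta_{b_i} = \nu_N(v,P_{\mc{C}_N}(X))$. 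The only subtlety is that the optimal plan $\gamma$ need not be unique. This is harmless: the argument applies to any optimal plan, and each such plan yields the same conclusion.
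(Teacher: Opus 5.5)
Your proof is correct, but it takes a different route from the paper's.

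The paper does not argue from scratch. It imports from \cite{gozlan2020mixture} the existence and uniqueness of the minimizer $\nu^*$ of \eqref{eq:projW2}, together with the existence of a Lipschitz optimal transport map $T^*$ from $\nu_N(v,X)$ to $\nu^*$. This forces $\nu^*=\nu_N(v,Y^*)$ with $Y^*=(T^*(x_i))_i$ and $\|Y^*-X\|_v^2=W_2^2(\nu^*,\nu_N(v,X))$. The bound $W_2^2(\nu_N(v,Y),\nu_N(v,X))\le\|Y-X\|_v^2$ then identifies $Y^*$ with $P_{\mc{C}_N}(X)$.

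You instead prove the key structural fact, that the optimizer charges at most $N$ points with masses $v$, directly:
\begin{itemize}
\item Taking barycenters of the conditional measures of an optimal plan replaces any admissible $\nu$ by $\nu_N(v,B)$ with $B\in\mc{C}_N$, via $\nu_N(v,B)\preceq_C\nu\preceq_C\rho$, without increasing the cost. This is the mechanism behind the barycentric weak-transport reformulation mentioned in the introduction.
\item Existence comes from the Hilbert projection onto the closed convex set $\mc{C}_N$.
\item Uniqueness comes from the equality case of $\int|y-x_i|^2\,\ed\mu_i=|b_i-x_i|^2+\int|y-b_i|^2\,\ed\mu_i$ together with $v_i>0$.
\end{itemize}

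Your argument is self-contained and elementary. It needs neither uniqueness of the optimal plan nor any regularity of a transport map, and it also covers repeated points $x_i=x_j$ without change. The paper's version is shorter given the citation, and it places the result inside a general theory: existence and Lipschitz regularity of the map hold for arbitrary, not necessarily discrete, source measures. Your caveat about second moments is harmless. The paper implicitly needs $\rho\in\mc{P}_2$ for $W_2$ as well, and any competitor with infinite second moment has infinite cost, so it cannot be optimal.
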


\begin{proof} Existence and uniqueness of the minimizer $\nu^*$ of \eqref{eq:projW2} has been proven in \cite{gozlan2020mixture}. In the same work, the authors showed that 
 there exists a Lipschitz continuous
optimal transport map $T^*$ from $\nu_N(v,X)$ to $\nu^*$.
This implies that 
$\nu^* = \nu_N(v,Y^*)$
for some vector $Y^* =(T^*(x_i))_i \in(\mathbb{R}^d)^N$. 
Using the fact that
\[
W_2^2(\nu_N(v,Y),\nu_N(v,X))\leq 
\| Y-X\|_v^2,
\]
 for any $Y\in (\mathbb{R}^d)^N$, we obtain
\[
\|Y^* - X\|_v^2 = \inf \{ W^2_2(\nu,\nu_N(v,X))\,;\, \nu \preceq_C \rho\}\leq 
\inf \{ \|Y - X\|_v^2 \,;\, \nu_N(v,Y) \preceq_C \rho\},
\]
which implies that $Y^* =P_{\mc{C}_N}(X)$.
\end{proof}

We conclude this section by remarking that the set $\mc{C}_N$ gives rise to an alternative characterisation of the quantization problem and optimal centroidal Laguerre tessellations.
\begin{remark}[Connection with optimal quantization] 
Consider the non-convex optimization problem
\begin{equation}
\label{eq:quantization1}
\inf_{X \in (\mathbb{R}^d)^N} \left\{-\frac{\|X\|^2_v}{2} \; ;\; X \in \mc{C}_N \right\}.
\end{equation}
Suppose that $X \in (\mathbb{R}^d)^N \setminus \Delta_N$ is a global minimizer. Then $0 \in \partial(-\| \cdot \|_v^2/2 + \iota_{\mc{C}_N})(X)$, and it follows from Proposition \ref{prop:dualF} that $X \in \partial F_N(X)$ or equivalently $x_i = \mr{bary}_{\rho}(L_i^*(v,X))$ for all $i$. In other words, $X$ generates a centroidal Laguerre tessellation \cite{BourneRoper2015,LevyCentroidalPower}. Moreover, it can be shown using Toland duality \cite{Toland79} that global minimizers of problem \eqref{eq:quantization1} are global minimizers of the following optimal quantization problem \cite{BKRS20,MerigotSantambrogioSarrazin, PortalesCazellesPauwels2025}:
\begin{equation}
\label{eq:quantization2}    
     \inf_{Y \in (\mathbb{R}^d)^N} W_2(\rho,\nu_N(v,Y)).
\end{equation}
Here we look for the best approximation of the absolutely continuous measure $\rho$ by a discrete measure with fixed volumes.
Problem \eqref{eq:quantization1} provides an alternative formulation of the quantization problem \eqref{eq:quantization2}. For an analogous statement in the fully discrete setting, where $\rho$ is discrete, see \cite[Theorem 2.4]{brieden2012optimal}.
\end{remark}

\section{Laguerre tessellation reconstruction}\label{sec:laguerre}
In Theorem \ref{th:boundary}, we showed that the vectors in $(\mathbb{R}^d)^N$  
that
can be obtained as the barycenters of Laguerre tessellations with given volumes, $\mc{B}_{\mr{Lag}}^N$, are precisely the exposed points of $\mc{C}_N$. 
Moreover, for any $B\in \mc{B}_{\mr{Lag}}^N$, there exists a supporting hyperplane with normal $Y {\in (\mathbb{R}^d)^N}\setminus \Delta_N$ that exposes $B$, i.e., such that \[\{B\} = \mr{argmax} \{ \langle X, Y\rangle_v \,;\, X \in \mc{C}_N\}.\]
By Proposition \ref{prop:dualF} the vector $Y$ is a vector of generators associated with the tessellation with barycenters $B$.

We can then reformulate the Laguerre tessellation reconstruction and fitting problems as follows: can one retrieve an exposed point of $\mc{C}_N$ that is closest to a given point $B \in (\mathbb{R}^d)^N$ and recover a normal vector $Y$ that exposes it? 
If 
$B$
is already an exposed point, we can regard this as a reconstruction problem (as this is equivalent to Question \ref{ques:reconstruction}), 
{otherwise we regard it as a fitting problem.}
In this section we {focus on the reconstruction problem, and discuss two possible methods to solve this}.

\subsection{Reconstruction via Legendre transform} 
By Proposition \ref{prop:dualF}, given a point $B \in \mc{B}^N_{\mr{Lag}}$, the generators $X$ associated with $B$ solve the problem
\begin{equation}
\label{eq:legF}
\sup_{Y}\, H(Y;B) \,,\quad H(Y;B) \coloneqq \langle Y,B\rangle_v - F_N(Y)\,.
\end{equation}
In \cite{bourne2024inverting} it was proposed to solve the reconstruction problem precisely by computing a solution to this maximization problem. 
While this seems to work well in practice (see \cite[Section 5.1]{bourne2024inverting}),  the function 
$H(\, \cdot \,; B)$ is not strictly concave and it
is also maximized by the constant vector $Y=(y,\ldots,y) \in (\mathbb{R}^d)^N$ for any $y \in \mathbb{R}^d$, and so maximising $H$ is not guaranteed to solve the reconstruction problem. We remark that this issue can be addressed by adding a convex constraint to the problem. Specifically, fixing arbitrary indices $i\neq j$ and $\delta>0$, one can restrict the maximization over the set
\[
\{Y\in(\mathbb{R}^d)^N\,;\,\langle b_i - b_j, y_i-y_j\rangle\geq \delta\,\}.
\]
In fact, by the invariance of the barycenters upon dilation of the generators, it is always possible to construct a set of generators $Y$ associated with $B$ that satisfies this constraint. 
On the other hand, by Proposition \ref{prop:uniqueness}, any maximizer $Y$  satisfying the constraint must also satisfy $y_i \neq y_j$ for all $i\neq j$, and is therefore a set of generators associated with the barycenters $B$.

\subsection{Reconstruction via projections in convex order} We describe here a second approach to 
solving
the reconstruction problem based on the orthogonal projection on $\mc{C}_N$ discussed in Section \ref{sec:projection}, which has the advantage of being unconstrained and strongly convex.

The idea is illustrated in Figure \ref{fig:intuition2}. Given an exposed point $B$ of $\mc{C}_N$, we first perturb it by a dilation, i.e.\ we consider the point $\lambda B$, with $\lambda >1$. One can check that $\lambda B \notin \mc{C}_N$ and that $B^* = P_{\mc{C}_N}(\lambda B)$ is on the relative boundary of $\mc{C}_N$ (see Lemma \ref{lem:projection} below). By standard properties of the orthogonal projection, $B^*$ is exposed by the vector $Y^* =\lambda B - B^*$, and $\|B-B^*\|_v \leq |\lambda -1| \|B\|_v$ is small for $\lambda$ close to one. Finally, if $Y^* \notin \Delta_N$, we have found an approximate solution of the reconstruction problem, since then $Y^*$ is a vector of distinct generators associated with $B^*$.

In order to show in which cases the property $Y^* \notin \Delta_N$ is satisfied we will need the following lemma:

\begin{figure}
\centering
\begin{tikzpicture}[scale=1.]

% --- Convex set (filled with smooth shading) ---
\fill[color=orange!10] 
  (-2.5,-.6)
  .. controls (-2,1.2) and (-1,1.6) .. (0.2,1.4)
  .. controls (0.8,1.3) and (1.2,1.0) .. (1.5,0.6)
  -- (1.0,-.6)
  -- (-2.5,-.6)
  -- cycle;

\draw[color=orange!40] 
  (-2.5,-.6)
  .. controls (-2,1.2) and (-1,1.6) .. (0.2,1.4)
  .. controls (0.8,1.3) and (1.2,1.0) .. (1.5,0.6)
  -- (1.0,-.6);

% --- Tangent/supporting line ---
\draw[color =gray, dashed]
  (-3.,1.808) -- (2.5,1.108);

\draw[color =gray, dashed]
  (-3,2.5) -- (2.5,.66);

% --- Contact point ---
\fill[red] (0.08,1.42) circle (2.5pt);

\fill[gray!80] (0.55,1.3) circle (2.5pt);

\node[gray!80] at (0.55,1.) {$B^*$};

\draw[-{Latex[length=2.6mm,width=2.2mm]}, line width = .8pt, color=gray!80]  (0.55,1.3)  -- (0.55+1.84*.15,1.3+5.5*.15 )
;
\node[color=gray!80] at (1.1,1.7) {$Y^*$};

\fill[red] (0.55+1.84*.16,1.3+5.5*.16 )  circle (2.5pt)
node[above] {$\lambda B$}
;

\node[color =red] at (0.05,1.7) {$B$};

\node at (2.1,1.5) {$Y$};

\node[color= orange] at (-.5,0) {$\mc{C}_N(v,\rho)$};

\draw[-{Latex[length=2.6mm,width=2.2mm]}, line width = .8pt]  (1.8,1.2) -- (1.912,2.08);
\end{tikzpicture}
\vspace{-2em}
\caption{In order to approximate $Y$, the direction exposing $B$, we compute $Y^* = \lambda B - B^*$ where $B^* = P_{\mc{C}_N(v,\rho)} (\lambda B)$ for $\lambda>1$ close to 1. By construction, $B^*$ maximizes $\langle \cdot,Y^*\rangle_v$ on $\mc{C}_N$, i.e., $Y^*$ exposes $B^*$, and $B^*\rightarrow B$ as $\lambda\rightarrow 1$. }\label{fig:intuition2}
\end{figure}

\begin{lemma}\label{lem:projection}
    Let $N\geq 2$, and suppose that $B$ is in  the relative boundary of $\mc{C}_N$, denoted $\mr{rel}\,\partial \mc{C}_N$. Then, for any $\lambda >1$ we also have $P_{\mc{C}_N} (\lambda B)\in \mr{rel}\,\partial \mc{C}_N$. {Moreover, $Y^* \coloneqq \lambda B - P_{\mc{C}_N}(\lambda B)$ is not a constant vector in $(\mathbb{R}^d)^N$, i.e., there does not exist $y \in \mathbb{R}^d$ such that $y_i^* = y$ for all $i \in \{ 1,\ldots,N\}$.}
\end{lemma}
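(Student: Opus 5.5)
Write $c\coloneqq\mr{bary}(\rho)$ and $\mathbf{c}\coloneqq(c,\ldots,c)\in(\mathbb{R}^d)^N$. Every $X\in\mc{C}_N$ satisfies $\sum_i v_i x_i = c$ by \eqref{eq: 1 is normal to C_N}, so $\mc{C}_N$ lies in the affine subspace
\[
A\coloneqq\Big\{X\,;\,\textstyle\sum_i v_i x_i=c\Big\}.
\]
This subspace has dimension $(N-1)d$, so by Lemma \ref{lem:dimension} it is the affine hull of $\mc{C}_N$. Its direction space is the $\langle\cdot,\cdot\rangle_v$-orthogonal complement of the constant vectors $\{(y,\ldots,y)\,;\,y\in\mathbb{R}^d\}$. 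The plan is to reduce both claims to facts about orthogonal projection onto a convex set with nonempty relative interior, working inside $A$.

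\emph{Step 1: the second claim.} We have $\sum_i v_i(\lambda b_i)=\lambda c$, so $\lambda B$ generally leaves $A$. Decompose $\lambda B = \lambda B_0 + \lambda\mathbf{c}$, where $B_0\coloneqq B-\mathbf{c}$ is $v$-orthogonal to the constant vectors. Since $\mc{C}_N\subset A$, projecting onto $\mc{C}_N$ commutes with first projecting orthogonally onto $A$. That orthogonal projection sends $\lambda B$ to $\tilde B\coloneqq \mathbf{c}+\lambda B_0$. Hence $P_{\mc{C}_N}(\lambda B)=P_{\mc{C}_N}(\tilde B)$, and
\[
Y^*=(\lambda-1)\mathbf{c} + (\tilde B-P_{\mc{C}_N}(\tilde B)).
\]
The second term lies in the direction space of $A$, i.e.\ it is orthogonal to constants. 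So $Y^*$ is constant if and only if $\tilde B\in\mc{C}_N$.

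\emph{Step 2: $\tilde B\notin\mc{C}_N$.} The point $\mathbf{c}$ lies in the relative interior of $\mc{C}_N$. To see this, note that $\langle \mathbf{c},Y\rangle_v = \langle c,\sum_i v_iy_i\rangle$ is the value of $\langle X,Y\rangle_v$ for the plan $\rho\otimes\nu_N(v,Y)$. For non-constant $Y$ this plan is strictly suboptimal in \eqref{eq:FY}, because the optimal plan is unique and is induced by a map. Hence $\langle\mathbf{c},Y\rangle_v<F_N(Y)$, so $\mathbf{c}$ lies in no proper exposed face, and by Theorem \ref{th:boundary}(3) it is relatively interior. Now $B=\mathbf{c}+B_0\in\mr{rel}\,\partial\mc{C}_N$, with $B_0\neq 0$ because $B\neq\mathbf{c}$. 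The point $\tilde B=\mathbf{c}+\lambda B_0$ with $\lambda>1$ lies on the ray from the relative interior point $\mathbf{c}$ through the relative boundary point $B$, strictly beyond $B$. By the standard fact that such a ray leaves a closed convex set exactly once, $\tilde B\notin\mc{C}_N$. This step needs some care with the relative interior versus the boundary: if $\tilde B$ were in $\mc{C}_N$, then $B$ would be a strict convex combination of the relative interior point $\mathbf{c}$ and $\tilde B$, and hence relatively interior, a contradiction.

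\emph{Step 3: the first claim.} Since $\tilde B\in A\setminus\mc{C}_N$, its projection onto $\mc{C}_N$ is a point of $\mc{C}_N$ nearest to an external point of the affine hull. Such a point cannot be relatively interior: a small step inside $A$ toward $\tilde B$ would stay in $\mc{C}_N$ and decrease the distance. Therefore $P_{\mc{C}_N}(\lambda B)\in\mr{rel}\,\partial\mc{C}_N$, and $Y^*$ has a nonzero non-constant component.

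I expect the main obstacle to be justifying rigorously that $\mathbf{c}$ is relatively interior. Uniqueness of the optimal plan in \eqref{eq:ot} (Brenier's theorem), together with the fact that $\rho\otimes\nu_N(v,Y)$ is not induced by a map when $Y$ is non-constant, should suffice. The role of $N\geq2$ is to ensure that a proper relative boundary exists.
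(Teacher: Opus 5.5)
Your proof is correct, but it takes a different route from the paper's. The paper picks a supporting normal $Y$ at $B$ with $\sum_i v_i y_i=0$. Since $\mr{bary}(\rho)_N$ is relatively interior, this gives $\langle B,Y\rangle_v>0$, hence $\langle \lambda B,Y\rangle_v>\max_{\mc{C}_N}\langle\cdot,Y\rangle_v$, so the hyperplane at $B$ separates $\lambda B$ from $\mc{C}_N$. The paper then gets non-constancy of $Y^*$ from the strict variational inequality $\langle Y^*,\mr{bary}(\rho)_N-P_{\mc{C}_N}(\lambda B)\rangle_v<0$, which no constant vector can satisfy.

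You never use a supporting hyperplane at $B$. Instead you project $\lambda B$ orthogonally onto the affine hull, obtaining $\tilde B=\mathbf{c}+\lambda(B-\mathbf{c})$, which has the same projection onto $\mc{C}_N$. The line-segment principle then gives $\tilde B\notin\mc{C}_N$, and both claims follow from the orthogonal splitting $Y^*=(\lambda-1)\mathbf{c}+(\tilde B-P_{\mc{C}_N}(\tilde B))$.

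Your route makes explicit two points the paper compresses. First, $\lambda B$ generally leaves the affine hull; the paper handles this only implicitly through the choice $\sum_i v_iy_i=0$. Second, the paper's strictness claim, once unpacked, needs exactly your fact that $\tilde B\notin\mc{C}_N$. You also obtain the exact equivalence ``$Y^*$ constant iff $\tilde B\in\mc{C}_N$''. Finally, you prove directly that $\mathbf{c}$ is relatively interior, via strict suboptimality of the product coupling; for $Y\in\Delta_N$ non-constant, apply Brenier to $\nu_M(w,\tilde Y)$ with $M\ge 2$. The paper simply invokes Theorem \ref{th:boundary}(3) for this.

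The paper's route is shorter and directly exhibits the separating hyperplane. That matches the exposing-normal picture used in the rest of Section \ref{sec:laguerre}.
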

\begin{proof}
Since  $ B \in \mr{rel}\,\partial \mc{C}_N$, there exists a vector $Y\in (\mathbb{R}^d)^N$, different from the constant vector, such that
\begin{equation}\label{eq:argmaxb}
B \in \mr{argmax}\{ \langle Y,X\rangle\,;\, X\in \mc{C}_N\}\,.
\end{equation}
Furthermore, we can pick $Y$ parallel to the affine hull of $\mc{C}_N$ defined by equation \eqref{eq: 1 is normal to C_N}, i.e.\ such that $\sum_i v_i y_i =0$. Note that the constant vector
 \[\mr{bary}(\rho)_N \coloneqq (\mr{bary}(\rho),\ldots,\mr{bary}(\rho)) \in (\mathbb{R}^d)^N\]
 is in the relative interior of $\mc{C}_N$, by the third point of Theorem \ref{th:boundary}. Therefore, because of \eqref{eq:argmaxb}, we have
\begin{equation}
\label{eq: B.Y > 0}    
\langle B, Y\rangle_{v} > 
\langle \mr{bary}(\rho)_N , Y\rangle_{v} = 
\sum_i v_i \langle \mr{bary}(\rho) , y_i \rangle = 0\,.
\end{equation}
This implies that $\lambda B \notin \mc{C}_N$ and $P_{\mc{C}_N}(\lambda B)$ belongs to the relative boundary of $\mc{C}_N$.
In fact, since 
$\langle \lambda B, Y\rangle_{v} > \langle B,Y\rangle_{v}$ by equation \eqref{eq: B.Y > 0}, 
then $\lambda B$ and $\mc{C}_N$ are separated by the supporting hyperplane containing $B$ given by
\[
\{ X \in (\mathbb{R}^d)^N\,;\, \langle X,Y\rangle_v = \langle B,Y\rangle_v\}\,.
\]
In addition, define $Y^*\coloneqq \lambda B - P_{\mc{C}_N}(\lambda B)$. By the classical characterisation of the orthogonal projection (e.g. \cite[Theorem 8.4]{BauschkeMoursi2023}),
\[
\langle Y^*, \mr{bary}(\rho)_N - P_{\mc{C}_N}(\lambda B)\rangle_v < 0\,,
\]
where the inequality is strict because $\mr{bary}(\rho)_N$ belongs to the relative interior of $\mc{C}_N$. This means that $Y^*$ cannot 
have
the form $y^*_i=y$ for all $i$, since otherwise this inner product would be zero
by equation \eqref{eq: 1 is normal to C_N}.
\end{proof}

As a direct consequence of Theorem \ref{th:interior} and Lemma \ref{lem:projection}, we obtain the following result:

\begin{proposition} \label{prop:projection} Suppose that $B\in \mc{B}^N_{\mr{iLag}}(v,\rho)$.
Then for any $\lambda>1$ sufficiently small,  $P_{\mc{C}_N}(\lambda B)\in \mc{B}^N_\mathrm{iLag}(v,\rho)$ and moreover $Y^* \coloneqq \lambda B - P_{\mc{C}_N}(\lambda B) \in (\mathbb{R}^d)^N \setminus \Delta_N$ is a vector of generators associated with $P_{\mc{C}_N}(\lambda B)$.
\end{proposition}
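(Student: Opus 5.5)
The plan is to combine the continuity of the projection with Theorem \ref{th:interior} and the characterization of exposed faces in Theorem \ref{th:boundary}. Here ``$\lambda>1$ sufficiently small'' is read as ``$\lambda>1$ sufficiently close to $1$''.

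First, since $B\in\mc{B}^N_{\mr{iLag}}\subset\mc{B}^N_{\mr{Lag}}\subset \mr{rel}\,\partial\mc{C}_N$, Lemma \ref{lem:projection} gives two facts for every $\lambda>1$: $B^*_\lambda\coloneqq P_{\mc{C}_N}(\lambda B)\in \mr{rel}\,\partial\mc{C}_N$, and $Y^*_\lambda\coloneqq\lambda B-B^*_\lambda$ is not a constant vector. Because the projection onto a closed convex set is $1$-Lipschitz in $\|\cdot\|_v$ and $P_{\mc{C}_N}(B)=B$, we get $\|B^*_\lambda-B\|_v\le(\lambda-1)\|B\|_v\to0$. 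By Theorem \ref{th:interior}, $\mc{B}^N_{\mr{iLag}}$ is open in $\mr{rel}\,\partial\mc{C}_N$. Since $B^*_\lambda$ lies in $\mr{rel}\,\partial\mc{C}_N$ and converges to $B$, it follows that $B^*_\lambda\in\mc{B}^N_{\mr{iLag}}$ for all $\lambda>1$ close enough to $1$. This proves the first claim.

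Next, by the standard characterization of the projection (equivalently, Lemma \ref{lem:dual}, which gives $B^*_\lambda\in\partial F_N(Y^*_\lambda)$ together with \eqref{eq:subgradF}), $B^*_\lambda$ maximizes $\langle\cdot,Y^*_\lambda\rangle_v$ over $\mc{C}_N$. It remains to show $Y^*_\lambda\notin\Delta_N$. Suppose instead that $Y^*_\lambda\in\Delta_N$. Since $Y^*_\lambda$ is non-constant, its distinct values give some $Y\in(\mathbb{R}^d)^M\setminus\Delta_M$ with $2\le M<N$ and a surjection $\sigma$ such that $(Y^*_\lambda)_i=y_{\sigma(i)}$. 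By the proof of Theorem \ref{th:boundary}, the argmax set is then the exposed face $\mc{F}^N_{\mr{Lag}}(Y,\sigma)$, so $B^*_\lambda\in\mc{F}^N_{\mr{Lag}}(Y,\sigma)$. By Remark \ref{rem:facesoffaces}, this face is isomorphic to $\times_j\mc{C}_{N^j}(v^j,\rho^j)$, which has dimension $d(N-M)>0$. So $B^*_\lambda$ lies in a proper exposed face different from $\{B^*_\lambda\}$, contradicting Proposition \ref{prop:maximal}, since $\{B^*_\lambda\}$ is a maximal face. (Equivalently, by Proposition \ref{prop:uniqueness} the unique tessellation with barycenters $B^*_\lambda$ would have cells grouped into the $M$-cell Laguerre tessellation $L^*(w,Y)$, contradicting irreducibility.) Hence $Y^*_\lambda\notin\Delta_N$.

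Finally, for $Y^*_\lambda\notin\Delta_N$, Proposition \ref{prop:dualF} gives $\partial F_N(Y^*_\lambda)=\{\nabla F_N(Y^*_\lambda)\}=\{(\mr{bary}_\rho L^*_i(v,Y^*_\lambda))_i\}$. Since $B^*_\lambda\in\partial F_N(Y^*_\lambda)$, the point $B^*_\lambda$ is the vector of barycenters of the Laguerre tessellation with generators $Y^*_\lambda$ and volumes $v$, so $Y^*_\lambda$ is a vector of generators associated with $B^*_\lambda$.

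The main subtlety is the exclusion $Y^*_\lambda\notin\Delta_N$. It rests on two ingredients: Lemma \ref{lem:projection} rules out the fully constant normal, and maximality of the face $\{B^*_\lambda\}$ rules out all partially coincident normals. Both inputs are already available, so the argument should go through; the only care needed is in checking that the faces $\mc{F}^N_{\mr{Lag}}(Y,\sigma)$ with $M<N$ really have positive dimension (via Lemma \ref{lem:dimension}), so that they cannot reduce to the single point $\{B^*_\lambda\}$.
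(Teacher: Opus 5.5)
Your proposal is correct and follows essentially the paper's route. The paper presents the result as a direct consequence of Theorem \ref{th:interior} and Lemma \ref{lem:projection}, together with the estimate $\|B-P_{\mathcal{C}_N}(\lambda B)\|_v\le(\lambda-1)\|B\|_v$ and the fact that $\lambda B-P_{\mathcal{C}_N}(\lambda B)$ exposes the projection; your use of Proposition \ref{prop:maximal} and Theorem \ref{th:boundary} to rule out partially coincident normals, and of the differentiability of $F_N$ off $\Delta_N$, spells out the steps the paper leaves implicit.
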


Proposition \ref{prop:projection} says that we can recover an irreducible Laguerre tessellation with barycenters arbitrary close to a given $B \in \mc{B}_{\mr{iLag}}^N$ via an orthogonal projection. 
On the other hand, in the general case where $B \in \mc{B}_{\mr{Lag}}^N$ or $B \in \mc{B}_{\mr{hLag}}^N$, we cannot guarantee that $Y^*\notin \Delta_N$. Even when $B \in \mc{B}_{\mr{iLag}}^N$, Proposition \ref{prop:projection}
does not provide a quantitative estimate on how small $\lambda$ should be in order to obtain a set of generators $Y^*\notin \Delta_N$. However,  numerically, the approach seems robust with respect to the choice of $\lambda$, in the sense that it generally provides a set of distinct generators even for $\lambda \gg 1$. Note also that as $\lambda \rightarrow 1$, $B^* \rightarrow B$ and by stability of optimal transport maps (see, e.g., Theorem 5.20 in \cite{villani2009optimal}) one can deduce that the Laguerre tessellation associated with $B^*$ converges to the one associated with $B$ (i.e., corresponding cells converge in measure). However it is not straightforward to quantify this convergence in terms of $\|B - B^*\|_v$.

We conclude the section with two further reinterpretations of the projection $P_{\mc{C}_N}(\lambda B)$. First, by the convex duality in Lemma \ref{lem:dual}, if $P_{\mc{C}_N}(\lambda B)$ is an exposed point that does not belong to any face of dimension strictly larger than zero, its exposing normal is
\[
Y = \lambda B - P_{\mc{C}_N}(\lambda B)\,.
\]
Moreover, $Y$ 
is the unique minimizer for the problem
\[
 \inf_{Y} \left\{\frac{\| Y\|^2_{v}}{2\lambda} - \langle Y,B\rangle_{v} + \frac{1}{\lambda}F_N(v,\rho;Y)\right\}\,.
\]
In other words, by introducing $\lambda >1$, we are weakening the problem’s strong convexity to avoid the trivial solution $Y=0$, which is the unique solution of the problem when $\lambda=1$ and $B \in \mc{B}_{\mr{Lag}}^N$. On the other hand, note that this is a strongly convex unconstrained problem, and therefore easier to solve than problem \eqref{eq:legF}, which is only convex and constrained. The downside is that here we are only computing an approximate solution. Remarkably, however, in numerical tests the projection approach generally produce good results even for relatively large values of $\lambda$ (see Section \ref{sec:numerical}).

The second interpretation is a consequence of Proposition \ref{prop:w2proj}. Specifically, $P_{\mc{C}_N}(\lambda X)$ can be related to the set of particle positions at time $t=\lambda/(\lambda-1)>1$ obtained by extending (in a specific way) the $W_2$-geodesic connecting $\rho$ at time $0$ to $\nu_N(v,X)$ at time $1$. Such an extension was introduced in \cite{gallouet2025metric} under the name of  \emph{metric extrapolation}, and  is defined via a minimization of a difference of squared Wasserstein distances, as explained in the following remark.

\begin{remark}[Relation with the metric Wasserstein extrapolation] 
Let us consider the problem of projecting $\lambda X$ onto 
$\mc{C}_N$ with $\lambda>1$, as in Proposition \ref{prop:projection}, but with $X\in (\mathbb{R}^d)^N$ arbitrary. By the change of variable $Y = (\lambda-1) Z$, the dual problem in Lemma \ref{lem:dual} becomes up to constant terms
\begin{equation}\label{eq:duallambda}
\inf_{Z {\in (\mathbb{R}^d)^N}} \left\{ \lambda (\lambda-1) \frac{\|Z-X\|_{v}^2}{2} -(\lambda-1) \frac{W^2_2(\nu_N(v,Z),\rho)}{2}
\right\}.\end{equation}
Set $t= \lambda/(\lambda-1)$. Then the infimum in \eqref{eq:duallambda} is larger than
\begin{equation}\label{eq:extrap}
\lambda \inf_{\mu {\in \mc{P}_2(\mathbb{R}^d)}} \left\{  \frac{W_2^2(\mu,\nu_N(v,X))}{2(t-1)} -\frac{W^2_2(\mu,\rho)}{2t}\
\right\}\,.\end{equation}
In fact, $\|Z-X\|_{v} \geq W_2(\nu_N(v,Z),\nu_N(v,X))$, and we get  problem  \eqref{eq:extrap} by replacing the minimization over $Z$ with a minimization over any probability measure $\mu$ with finite second moments. This is precisely the metric extrapolation problem, which was introduced in \cite{gallouet2025metric} to extend up to time $t>1$ the $W_2$-geodesic connecting $\rho$ at time $0$ to $\nu_N(v,X)$ at time $1$. It was proven in \cite{gallouet2025metric} that the solution to this problem is unique and 
has
the form $\mu^* = T_\# \nu_N(v,X)$
where $T$ is a continuous map. This implies that problem \eqref{eq:extrap} actually coincides with \eqref{eq:duallambda} and $\mu^*= \nu_N(v,Z^*)$ with $Z^*$ solving \eqref{eq:duallambda}.
\end{remark}

\section{Numerical methods and tests} \label{sec:numerical}
In this section we describe two numerical strategies to solve the projection problem considered in Section \ref{sec:projection}, and then apply them to the reconstruction and fitting of Laguerre tessellations.

\subsection{Subgradient descent}
Let us go back to the dual problem in Lemma \ref{lem:dual}, i.e.\
\begin{equation}\label{eq:dualG}
\inf_Y G_N(Y)\,, \quad \text{where}\quad
G_N(Y) \coloneqq \frac{\|Y\|_v^2}{2} -\langle Y,X\rangle_v +F_N(v,\rho; Y)\,.
\end{equation}
Note that minimizing $G_N$ is equivalent to evaluating the proximal operator of $F_N$ with respect to the weighted Euclidean norm $\| \cdot \|_v$; see \cite[Definition 25.1]{BauschkeMoursi2023}.

Given any $Y \in (\mathbb{R}^d)^N$, let $M \in\{1,\ldots ,N\}$ and $\tilde{Y}\in (\mathbb{R}^d)^M
\setminus
\Delta_M$ be such that $y_i =  \tilde{y}_{\sigma(i)}$ for all $i$, for some surjective map $\sigma:\{1, \ldots, N\} \rightarrow \{1, \ldots,M\}$. Define $w =(w_1, \ldots,w_M)$ by
\[
w_{j} = \sum_{i\in \sigma^{-1}(j)} v_i.
\]
Then, from the proof of Theorem \ref{th:boundary}, one can deduce that
\begin{equation}\label{eq:BY}
B(Y) \coloneqq (\mr{bary}_\rho(L^*_{\sigma(i)}(w,\tilde{Y})))_i \in \partial F_N(Y),
\end{equation}
where $\partial F_N$ denotes the subgradient of $F_N$ with respect to the metric $\langle \cdot,\cdot\rangle_v$. 

\begin{lemma}\label{lem:lip} Suppose $\rho \in \mc{P}_2(\mathbb{R}^d)$, i.e.\
\[
m_2^2(\rho) \coloneqq \int_{\mathbb{R}^d} |x|^2 \ed \rho(x) < +\infty.
\]
Then $F_N$ is $m_2(\rho)$-Lipschitz. Moreover, denoting by $Y^*$ the unique solution of
\eqref{eq:dualG}, $\|Y^*-X\|_v\leq m_2(\rho)$ and $G_N$ is $2m_2(\rho)$-Lipschitz on the set $\{Y\;;\; \|Y-X\|_v\leq m_2(\rho)\}$.
\end{lemma}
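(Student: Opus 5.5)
The plan is to use the support-function representation of $F_N$ from Proposition~\ref{prop:dualF}, namely $F_N(Y)=\max\{\langle Y,B\rangle_v\,;\,B\in\mc{C}_N\}$. This reduces the Lipschitz claim to a uniform bound on $\|B\|_v$ over $B\in\mc{C}_N$.

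\textbf{Step 1: bound on $\mc{C}_N$.} For $B\in\mc{C}_N$, the convex function $\varphi(x)=|x|^2$ in \eqref{eq:convexorder} gives
\[
\|B\|_v^2=\sum_i v_i|b_i|^2\le\int|x|^2\,\ed\rho=m_2^2(\rho).
\]
Hence $\mc{C}_N$ lies in the $\|\cdot\|_v$-ball of radius $m_2(\rho)$. If one prefers to avoid testing with an unbounded convex function, one can argue via Strassen's decomposition \eqref{eq:decomposition} and Jensen's inequality applied to each $\rho_i$.

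\textbf{Step 2: $F_N$ is Lipschitz.} For any $Y,Z$, pick $B\in\partial F_N(Y)\subset\mc{C}_N$ (nonempty by \eqref{eq:subgradF}). Then
\[
F_N(Y)-F_N(Z)\le\langle B,Y-Z\rangle_v\le m_2(\rho)\|Y-Z\|_v
\]
by Cauchy--Schwarz. Exchanging the roles of $Y$ and $Z$ shows that $F_N$ is $m_2(\rho)$-Lipschitz with respect to $\|\cdot\|_v$.

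\textbf{Step 3: localization of $Y^*$.} By Lemma~\ref{lem:dual}, $Y^*=X-P_{\mc{C}_N}(X)$, so $Y^*-X=-P_{\mc{C}_N}(X)\in-\mc{C}_N$. Step~1 then gives $\|Y^*-X\|_v\le m_2(\rho)$.

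\textbf{Step 4: Lipschitz bound for $G_N$.} Write
\[
G_N(Y)=\tfrac12\|Y-X\|_v^2-\tfrac12\|X\|_v^2+F_N(Y).
\]
On the ball $\{\|Y-X\|_v\le m_2(\rho)\}$, the quadratic term has gradient $Y-X$, whose norm is at most $m_2(\rho)$. So that term is $m_2(\rho)$-Lipschitz on the ball, since the ball is convex and we may integrate along segments. Adding the $m_2(\rho)$-Lipschitz function $F_N$ gives the constant $2m_2(\rho)$.

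No step is a serious obstacle. The only point needing care is justifying the test function $|x|^2$ in Step~1, which is handled by the Strassen/Jensen route.
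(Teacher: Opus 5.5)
Your proof is correct and follows essentially the same route as the paper. The only difference is in how you bound the subgradient: the paper uses the explicit subgradient $B(Z)$ from \eqref{eq:BY} together with Jensen's inequality on each cell, whereas you take any element of $\partial F_N(Y)\subset\mc{C}_N$ and test the convex order with $|x|^2$. The remaining steps, namely $Y^*-X=-P_{\mc{C}_N}(X)$ via Lemma~\ref{lem:dual} and your splitting $G_N(Y)=\tfrac12\|Y-X\|_v^2-\tfrac12\|X\|_v^2+F_N(Y)$ in place of the paper's $\partial G_N(Y)=Y-X+\partial F_N(Y)$ with the triangle inequality, match the paper up to rephrasing.
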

\begin{proof}
For any $Y,Z \in (\mathbb{R}^d)^N$, since $B(Z) \in \partial F_N(Z)$,
\[
\begin{aligned}
F_N(Z) -F_N(Y) &\leq \langle B(Z), Z-Y\rangle_v\\
&\leq \|B(Z)\|_v \|Z-Y\|_v \\&\leq m_2(\rho) \|Z-Y\|_v\,,
\end{aligned}
\]
where the last inequality is due to Jensen's inequality. Exchanging $Y$ and $Z$ we obtain that $F_N$ is $m_2(\rho)$-Lipschitz. Moreover, by Lemma \ref{lem:dual},
\[
\|Y^*-X\|_v = \| P_{\mc{C}_N}(X)\|_v \leq m_2(\rho)\,,
\]
where the last inequality follows from the definition of convex order and choosing as test function $\varphi:x\rightarrow |x|^2$. We conclude by observing that
\[
\partial G_N(Y) = Y-X + \partial F_N(Y)
\]
and using the triangle inequality.
\end{proof}

Given the considerations above, we can formulate a projected subgradient descent method with varying stepsize $\eta_k$ (see, e.g., Section 3.4.1 in \cite{bubeck2015convex}) as follows: given $Y_1 \in (\mathbb{R}^d)^N$, for all $k\geq 1$,
\begin{equation}\label{eq:sg}
\begin{aligned}
\textbf{Step 1. }&Z_k = Y_k - \eta_k (Y_k -X + B(Y_k))\,,\\
\textbf{Step 2. }&Y_{k+1}  = X + {\max\left(\frac{\|Z_k-X\|_v}{m_2(\rho)}, 1\right)}^{-1} (Z_k-X) \,.
\end{aligned}
\end{equation}

As a consequence of Lemma \ref{lem:lip}, the strong convexity of $G_N$, and 
Theorem 3.9 in \cite{bubeck2015convex} we have that the subgradient method defined by \eqref{eq:sg} converges with sublinear rate with an appropriate choice of step-size. More precisely:
\begin{lemma}\label{lem:convergencesg} Choosing $\eta_k = 2/(k+1)$, the iterates of algorithm \eqref{eq:sg}  
{satisfy}
for $k\geq 1$
\[
G_N\left( \sum_{j=1}^k \frac{2j}{k(k+1)} Y_j\right) - G_N(Y^*) \leq \frac{4 m_2^2(\rho)}{k+1}\,.
\]
\end{lemma}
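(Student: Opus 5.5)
The plan is to read Lemma \ref{lem:convergencesg} as a direct instance of the standard convergence theorem for projected subgradient descent on a strongly convex, Lipschitz objective over a convex set (Theorem 3.9 in \cite{bubeck2015convex}). That theorem states that for an $\alpha$-strongly convex function that is $L$-Lipschitz on a closed convex set $\mc{K}$, projected subgradient descent with steps $\eta_k = 2/(\alpha(k+1))$ satisfies
\[
G\Big(\sum_{j=1}^k \tfrac{2j}{k(k+1)} Y_j\Big) - G(Y^*) \le \frac{2L^2}{\alpha(k+1)}.
\]
So it suffices to identify $\mc{K}$, $\alpha$ and $L$, and to check that the scheme \eqref{eq:sg} is exactly that algorithm. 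All of this is done in the Hilbert space $((\mathbb{R}^d)^N,\langle\cdot,\cdot\rangle_v)$, so every notion (subgradient, projection, strong convexity) is taken with respect to $\|\cdot\|_v$.

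First I take $\mc{K}\coloneqq\{Y\,;\,\|Y-X\|_v\le m_2(\rho)\}$, the closed $\|\cdot\|_v$-ball around $X$.

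Next I check that the scheme \eqref{eq:sg} is projected subgradient descent on $\mc{K}$.
\begin{itemize}
\item Step 2 is the $\|\cdot\|_v$-orthogonal projection onto this ball: it rescales $Z_k-X$ radially when it lies outside.
\item By Lemma \ref{lem:lip}, the minimizer $Y^*$ of $G_N$ lies in $\mc{K}$. Hence minimizing $G_N$ over $\mc{K}$ is the same as the unconstrained problem \eqref{eq:dualG}.
\item In Step 1, $Y_k - X + B(Y_k)\in\partial G_N(Y_k)$. This follows from \eqref{eq:BY} together with $\partial G_N(Y)=Y-X+\partial F_N(Y)$, as noted in the proof of Lemma \ref{lem:lip}.
\end{itemize}

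Then I fix the constants. $G_N$ is the sum of the convex function $F_N$ (Proposition \ref{prop:dualF}) and $\frac12\|Y\|_v^2-\langle Y,X\rangle_v$, so it is $1$-strongly convex with respect to $\|\cdot\|_v$; thus $\alpha=1$. With $\alpha=1$ the prescribed step size becomes $\eta_k=2/(k+1)$, which is the choice in the statement.

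For the Lipschitz constant, the theorem really only uses the bound $\|g_k\|\le L$ on the subgradients used. Here $g_k=Y_k-X+B(Y_k)$, and each term is controlled on $\mc{K}$:
\begin{itemize}
\item $\|Y_k-X\|_v\le m_2(\rho)$ because $Y_k\in\mc{K}$;
\item $\|B(Y_k)\|_v\le m_2(\rho)$ by Jensen's inequality, since $B(Y_k)\in\mc{C}_N$.
\end{itemize}
For $Y_1$ I either assume $Y_1\in\mc{K}$, or note that the $k=1$ step is harmless. This gives $L=2m_2(\rho)$, consistent with Lemma \ref{lem:lip}.

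Plugging in yields the bound $2L^2/(\alpha(k+1)) = 8m_2^2(\rho)/(k+1)$. That is off by a factor $2$ from the stated $4m_2^2(\rho)/(k+1)$, and this is the step I expect to be the obstacle. I would re-derive the one-step inequality of the theorem directly in the $\|\cdot\|_v$ geometry, aiming to reach the stated constant. I would try two refinements:
\begin{itemize}
\item use the sharper estimate $\|g_k\|_v^2=\|Y_k-X+B(Y_k)\|_v^2$ rather than the crude triangle-inequality bound $(2m_2)^2$;
\item exploit that the gradient of the quadratic part is affine, so only $F_N$ needs a subgradient bound.
\end{itemize}
If neither recovers the factor, I would accept the constant from the cited theorem as stated there and report the bound with that constant.
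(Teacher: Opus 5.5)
Your route is exactly the paper's. Its proof consists only of Lemma~\ref{lem:lip}, the $1$-strong convexity of $G_N$ in $\|\cdot\|_v$, and Theorem~3.9 of Bubeck applied on the ball $\mathcal{K}$. Your checks are all correct: Step~2 is the $\|\cdot\|_v$-projection onto $\mathcal{K}$, $Y^*\in\mathcal{K}$, $Y_k-X+B(Y_k)\in\partial G_N(Y_k)$, $\alpha=1$, and the subgradient norms are at most $2m_2(\rho)$.

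Your arithmetic is also correct, and it exposes a problem with the statement rather than with your reasoning. The cited theorem gives $2L^2/(\alpha(k+1))=8m_2^2(\rho)/(k+1)$, and nothing in the paper recovers the missing factor $2$. So what this argument (yours and the paper's) actually proves is the bound with $8m_2^2(\rho)$ in place of $4m_2^2(\rho)$.

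Do not expect your two refinements to close this gap.
\begin{enumerate}
\item Sharper bound on $\|g_k\|_v$. Since $\eta_1=1$ and $\eta_k\le 1$, Step~2 is never active. Moreover $Y_k-X=-\bar B_k$ for $k\ge 2$, where $\bar B_k$ is a convex combination of $B(Y_1),\dots,B(Y_{k-1})\in\mathcal{C}_N$. Hence $g_k=B(Y_k)-\bar B_k\in\mathcal{C}_N-\mathcal{C}_N$, and the only a priori bound is $\mathrm{diam}(\mathcal{C}_N)$. This diameter can be arbitrarily close to $2m_2(\rho)$: take $d=1$, $N=2$, $v=(\frac12,\frac12)$ and $\rho$ symmetric, concentrated near $\pm a$. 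Even keeping the exact weights $j/(j+1)$ in the telescoped sum gives only $8m_2^2(\rho)(k+1-H_{k+1})/(k(k+1))$, with $H_{k+1}=\sum_{i=1}^{k+1}1/i$. This is at most $4m_2^2(\rho)/(k+1)$ only when $k=1$.
\item Exploiting the affine gradient of the quadratic part. Because Step~2 is inactive, the distance recursion is an identity. Also $G_N(Y_k)-G_N(Y^*)=\langle g_k,Y_k-Y^*\rangle_v-\frac12\|Y_k-Y^*\|_v^2+\langle B(Y_k)-P_{\mathcal{C}_N}(X),Y^*\rangle_v$, with the last term $\le 0$. So the quadratic part is already handled exactly, and separating it buys nothing within scheme \eqref{eq:sg}.
\end{enumerate}

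Your hedge on the initialization is wrong: the $k=1$ step is not harmless. $Y_1$ enters the averaged point with weight $2/(k(k+1))$, and the first term of the telescoped sum is $\frac12\|Y_1-X+B(Y_1)\|_v^2$, which is unbounded in $Y_1$. For $k=1$ the claim reads $G_N(Y_1)-G_N(Y^*)\le 2m_2^2(\rho)$. This fails for $Y_1$ far from $X$, since $G_N$ grows quadratically. You need the hypothesis $\|Y_1-X\|_v\le m_2(\rho)$. Bubeck's theorem requires the initial point to lie in the constraint set, and the paper tacitly assumes this too. With that hypothesis, your argument is a complete proof of the bound with constant $8m_2^2(\rho)$.
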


\subsection{Frank-Wolfe algorithm} \label{sec:fw} The projection problem
\[
\inf_{B\in \mc{C}_N} J_N(B)\,, \quad J_N(B) \coloneqq\frac{1}{2} \|B-X\|^2_v\,,
\]
can be solved by applying directly the classical Frank-Wolfe algorithm \cite{frank1956algorithm}.
In our setting, this amounts to the following iterative scheme, which allows us to compute $\mc{P}_{\mc{C}_N}(X)$ from a given initial guess $B_1\in \mc{C}_N$: for all $k\geq 1$,
\begin{equation}\label{eq:fw}
\begin{aligned}
\text{\textbf{Step 1}. } & \text{Compute } C_{k}= B(X-B_k) \in \partial F_N(X-B_k),\\
\text{\textbf{Step 2}. } & \lambda_{k} = \arg \min \{ \|\lambda B_k+(1-\lambda) C_k-X\|^2_v \,;\, \lambda \in [0,1]\},\\
\text{\textbf{Step 3}. } & B_{k+1} = \lambda_k B_k+(1-\lambda_k) C_k\,.
\end{aligned}
\end{equation}

Note that in the first step $B(\cdot)$ is defined as in \eqref{eq:BY}.
The expression for $C_k$ is derived by minimizing over $\mc{C}_N$ the linearization of $J_N$ about $B_k$. 
As for the convergence of the algorithm, we start by observing that $\mr{diam}(\mc{C}_N) \leq 2 {m_2(\rho)}$. In fact, for any $X_1,X_2 \in \mc{C}_N$, by the triangle inequality
and Jensen's inequality,
\[
\|X_1-X_2\|_v^2 \leq 2 \|X_1\|_v^2 +2 \|X_2\|^2_v \leq 4 m_2^2(\rho)\,.
\]
Then, as a direct application of Theorem 13.14 in \cite{beck2017first}, we find:
\begin{lemma}\label{lem:convergencefw} The iterates of algorithm \eqref{eq:fw}  
{satisfy}
for $k\geq 2$
\[
J_N(B_k)
- J_N(P_{\mc{C}_N}(X)) \leq \frac{8 m_2^2(\rho)}{k-1}\,.
\]
\end{lemma}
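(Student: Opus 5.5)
The plan is to read Lemma \ref{lem:convergencefw} off the standard $O(1/k)$ Frank--Wolfe rate for an $L$-smooth convex objective over a compact convex set, as in Theorem 13.14 of \cite{beck2017first}. That theorem requires four checks, carried out in the order below.

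First, I verify the smoothness constant. With respect to $\langle\cdot,\cdot\rangle_v$, the objective $J_N(B)=\frac12\|B-X\|_v^2$ has gradient $\nabla J_N(B)=B-X$. This gradient is $1$-Lipschitz in $\|\cdot\|_v$, so $L=1$. Since the whole algorithm is stated in the $v$-weighted geometry, I work consistently in that inner product space.

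Second, I check that Step 1 of \eqref{eq:fw} is the linear minimization oracle. Minimizing $\langle \nabla J_N(B_k), C\rangle_v$ over $C\in\mc{C}_N$ is the same as maximizing $\langle X-B_k, C\rangle_v$. By \eqref{eq:subgradF} in Proposition \ref{prop:dualF}, the set of maximizers equals $\partial F_N(X-B_k)$. By \eqref{eq:BY} (from the proof of Theorem \ref{th:boundary}), $B(X-B_k)$ belongs to this subdifferential, including when $X-B_k\in\Delta_N$. I also check that Step 2 is an exact line search: writing $\lambda B_k+(1-\lambda)C_k = B_k+(1-\lambda)(C_k-B_k)$, it is exact line search along the Frank--Wolfe direction. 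The cited theorem covers exact line search as well as the adaptive stepsize, and in any case exact line search decreases $J_N$ at least as much as the stepsize $2/(k+1)$ does.

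Third, I bound the diameter. The bound $\mr{diam}(\mc{C}_N)\le 2m_2(\rho)$ was shown just above using Jensen's inequality with $\varphi=|x|^2$, so $\sum_i v_i|x_i|^2\le m_2^2(\rho)$ for every element of $\mc{C}_N$. Compactness and convexity of $\mc{C}_N$ come from the basic properties of $\mc{C}_N$, and the initial point satisfies $B_1\in\mc{C}_N$ by assumption, so all iterates stay in $\mc{C}_N$.

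Finally, I plug these into the rate $J_N(B_k)-J_N^*\le \frac{2L\,\mr{diam}^2}{k-1}$, which holds for $k\ge 2$. This gives $\frac{2\cdot 4m_2^2(\rho)}{k-1}=\frac{8m_2^2(\rho)}{k-1}$, as stated. The only step that is more than bookkeeping is matching the exact constant and indexing convention ($k-1$, $k\ge2$) in the cited theorem. If it is stated with the constant in another form, I would rederive it using the standard recursion $h_{k+1}\le (1-\gamma)h_k+\frac{\gamma^2}{2}L D^2$ with $\gamma=2/(k+1)$, followed by induction.
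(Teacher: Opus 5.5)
Your proposal is correct and follows the paper's route. The paper also bounds $\mathrm{diam}(\mc{C}_N)\le 2m_2(\rho)$ via the convex-order inequality with $\varphi=|x|^2$ and then applies Theorem 13.14 of \cite{beck2017first} directly, giving $2\cdot 1\cdot(2m_2(\rho))^2/(k-1)$; your extra checks (Step 1 is the linear minimization oracle by \eqref{eq:subgradF} and \eqref{eq:BY}, Step 2 is exact line search, and the fallback recursion with $\gamma=2/(k+1)$) make explicit hypotheses that the paper leaves implicit.
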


\subsection{Numerical tests} We now present some numerical results that illustrate the behavior of our method for the reconstruction and fitting problems. More precisely, given a set of volumes $v$ and barycenters $B$, we reconstruct the  generators of the associated tessellation by setting
\[
Y = \lambda B - P_{\mc{C}_N}( \lambda B)\,,
\]
where $\lambda = t/(t-1)$ with $t>1$ is a regularization parameter. In practice, $Y$ is computed either directly via algorithm \eqref{eq:sg} or by setting $Y = X-B$ where $B$ is obtained via algorithm \eqref{eq:fw}. For the tests considered here, we found numerically that particles stayed sufficiently far from each other to allow for the computation of the optimal tessellation with $N$ distinct particles (which was realized using the open-source library \texttt{sd-ot}, available at \url{https://github.com/sd-ot}).
A Jupyter notebook containing the numerical experiments and code used in this paper is available at \url{https://github.com/andnatale/Laguerre-fitting/}.

\subsubsection{Reconstruction of Laguerre tessellations}
 We consider the problem of reconstructing a Laguerre tessellation given its cell volumes and barycenters. 
For all tests in this section, we use as reference measure $\rho = \mathbf{1}_{[0,1]^2}\ed x$.

Figure \ref{fig:reconstruction} shows the result obtained using algorithm \eqref{eq:sg} with $t=1250$ and $N=20$. 
The reconstruction obtained using the Frank-Wolfe scheme \eqref{eq:fw} is identical. Note that here the initialization of the subgradient scheme is a random distribution of generators (depicted in Figure \ref{fig:reconstruction}, center) and the one for the Frank-Wolfe scheme is the associated distribution of barycenters. The convergence of the two schemes is illustrated in Figures \ref{fig:convergence_reconstruction} and \ref{fig:convergencet_reconstruction}, both in terms of number of iterations and increasing the regularization parameter $t$. One can observe that both schemes appear to converge faster than 
predicted 
in Lemmas \ref{lem:convergencesg} and \ref{lem:convergencefw},
and that the Frank-Wolfe algorithm exhibits almost linear convergence, especially for small values of $t$.

The results corresponding to a larger test ($N=728$) are shown in Figure \ref{fig:reconstruction_cross},  \ref{fig:generators_reconstruction_cross},  \ref{fig:initial}, and \ref{fig:convergence_reconstruction_cross}. Note that while the reconstructed barycenters approximate well the data, the reconstructed generators  are very different (see Figure \ref{fig:generators_reconstruction_cross}) and the shape of specific Laguerre cells may differ considerably with respect to the true ones 
(see Figure \ref{fig:reconstruction_cross}). This is expected since the tessellation generating the data is not irreducible. As for the convergence of the schemes, we consider two possible initial conditions shown in Figure \ref{fig:initial}, the first being associated to a uniform distribution of generators, and the second corresponding the the choice $Y_1 = B$ and $B_1 = B$. Note that in this symmetric configuration this choice yields already a good approximation of the target tessellation. The behavior of the subgradient descent scheme is however unchanged, whereas the Frank-Wolfe scheme, even if starting with a much lower error, converges slowly at first and eventually performs similarly to the subgradient scheme.  

\begin{figure}
    \centering
    \includegraphics[scale=.7,trim = 0 15 0 0, clip = True]{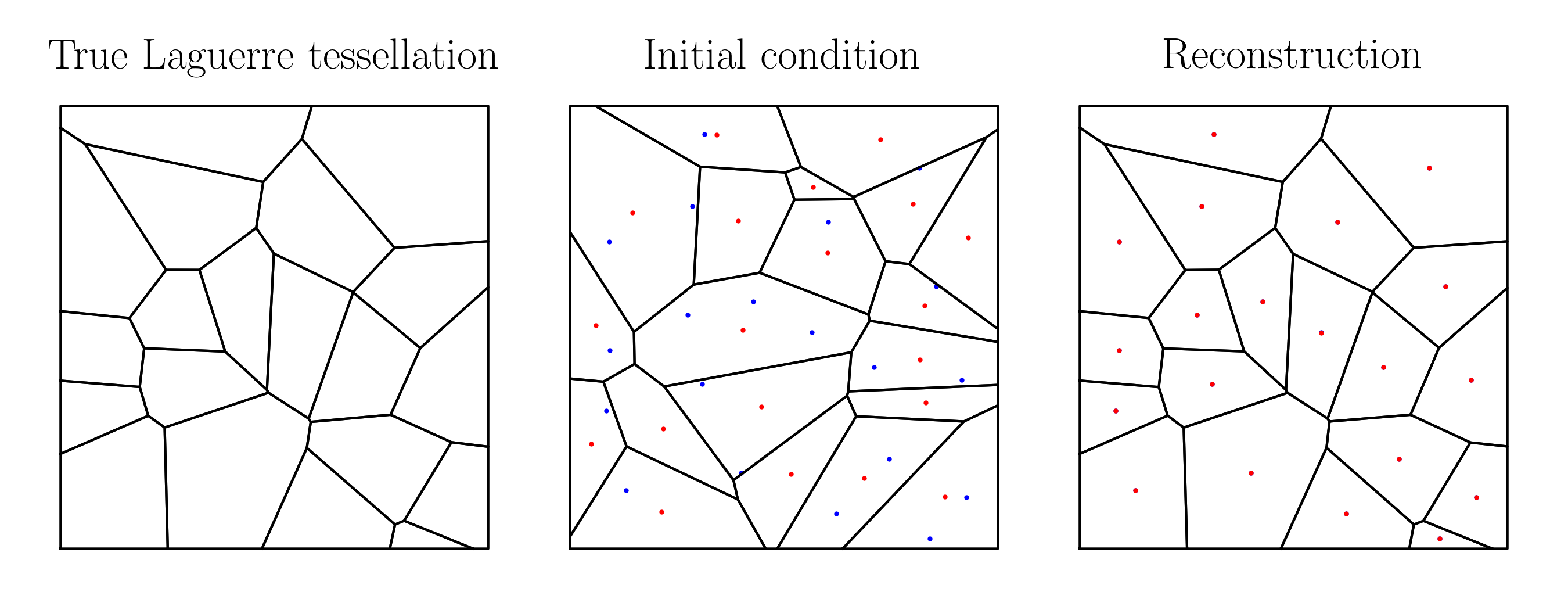}
    \caption{Reconstruction of a Laguerre tessellation, for $t=1250$ and the subgradient descent scheme. The barycenters of the true solutions are in blue and those of the reconstruction are in red. The reconstruction obtained with the Frank-Wolfe scheme is identical (not shown).}
    \label{fig:reconstruction}
\end{figure}

\begin{figure}
    \centering
    \includegraphics[width=0.45\linewidth]{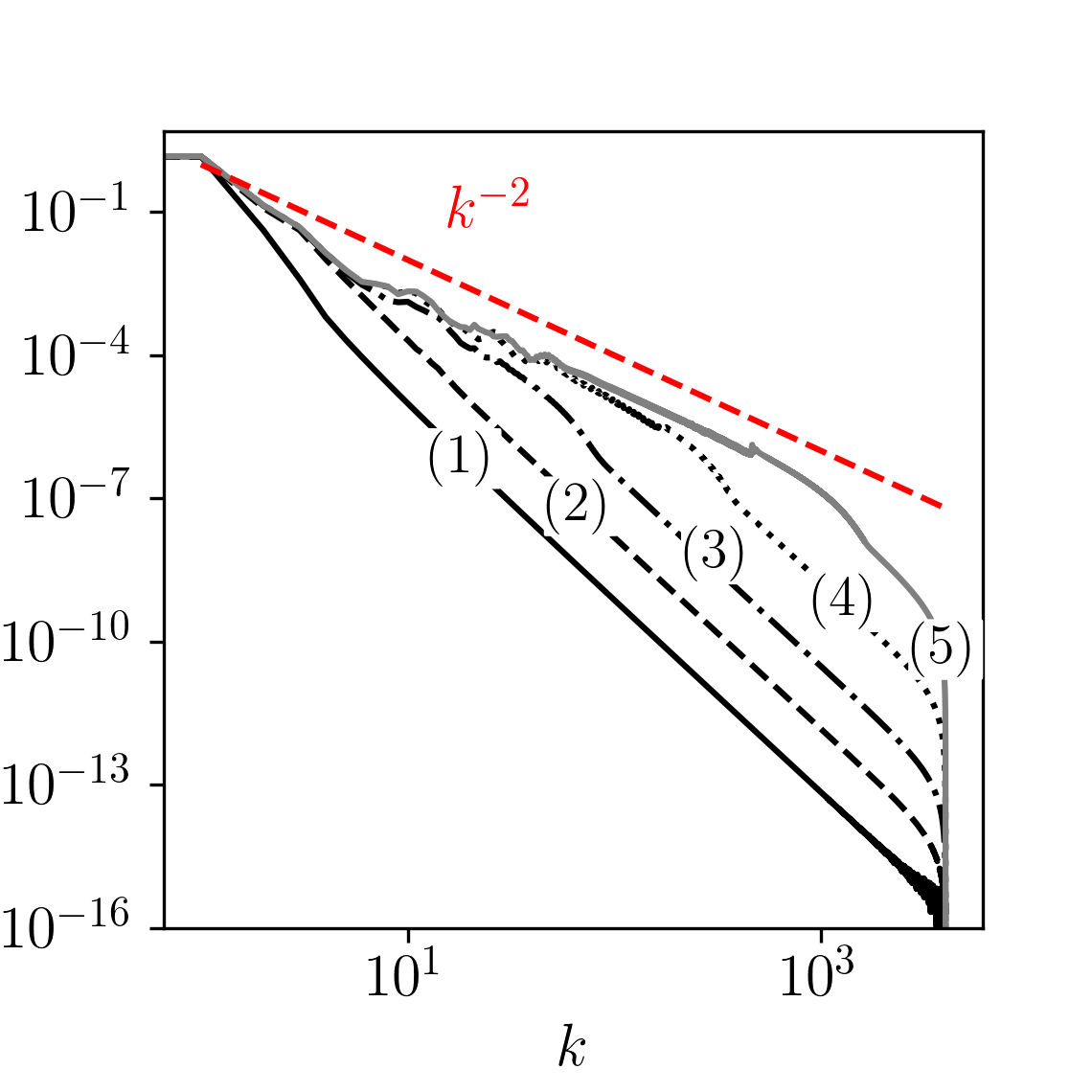}
    \hspace{-1em}
    \includegraphics[width=0.45
     \linewidth]{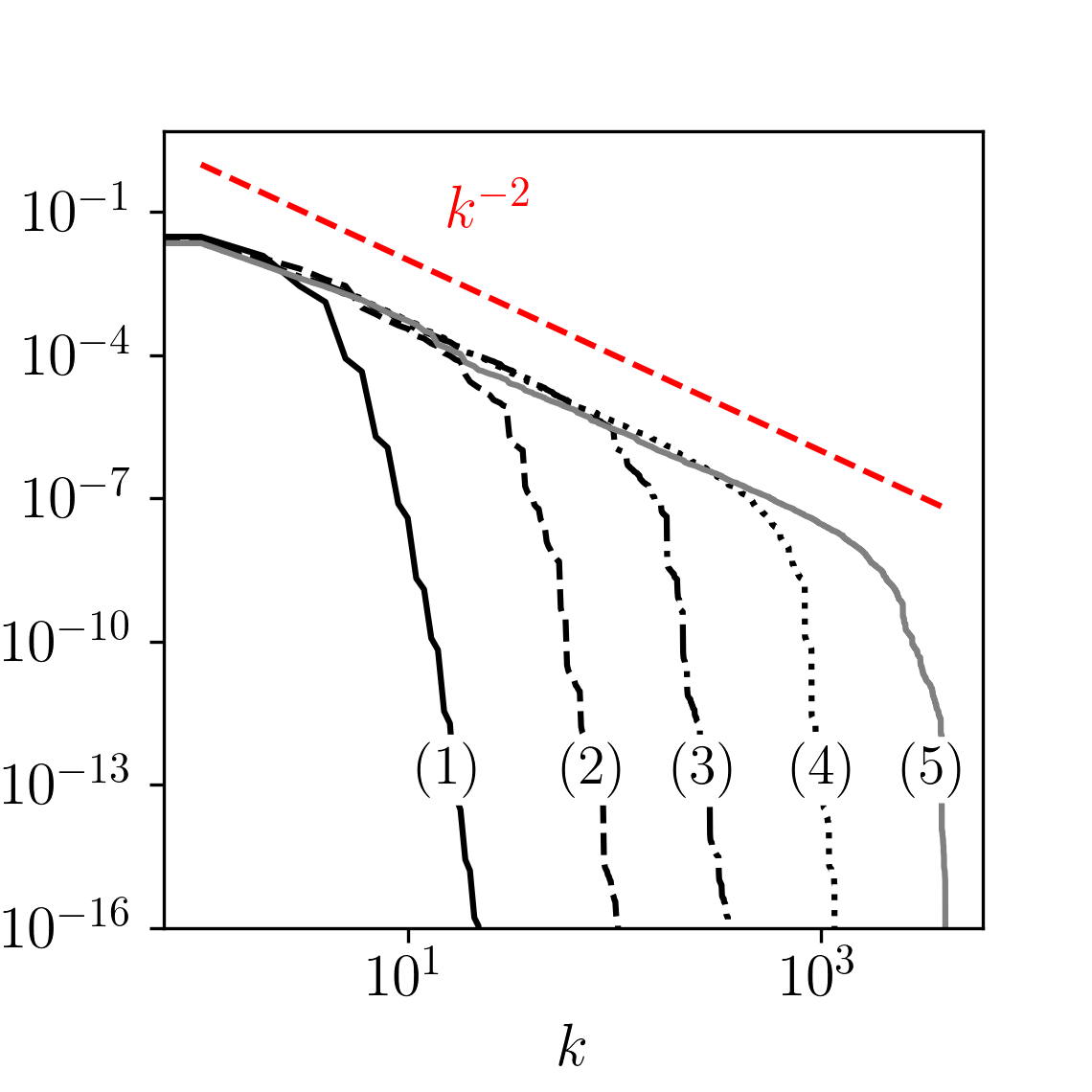}
    \caption{On the left: error $G_N(Y_k)- G_N(Y_K)$ 
    for the subgradient scheme, for $1\leq k \leq K$, for different values of $t$ (the curve $(i)$ corresponds to $t=2\cdot5^{i-1}$), and the data in Figure \ref{fig:reconstruction}. On the right: error 
    $J_N(B_k)- J_N(B_K)$ 
    for the Frank-Wolfe scheme. 
    }
\label{fig:convergence_reconstruction}
\end{figure}

\begin{figure}
    \includegraphics[width=0.45
     \linewidth]{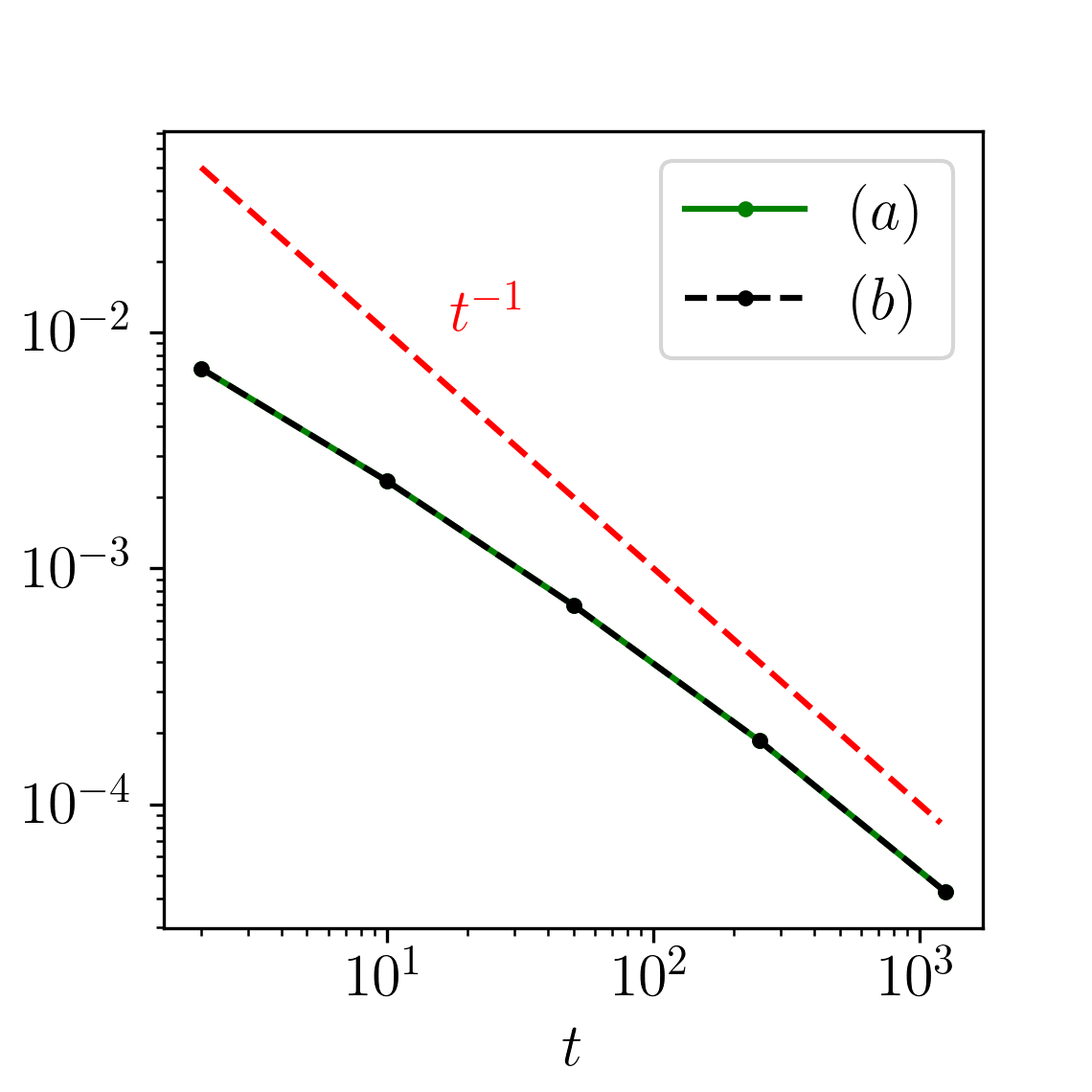}
\caption{Error 
of the
barycenter reconstruction, $\|B_K-B\|_v$, for different $t$ at the last iteration $K$ of 
(a)
the subgradient descent scheme
and 
(b) the
Frank-Wolfe 
scheme.
}
\label{fig:convergencet_reconstruction}
\end{figure}

\begin{figure}
    \centering
    \includegraphics[scale=.7, trim = 0 15 0 0, clip = True]{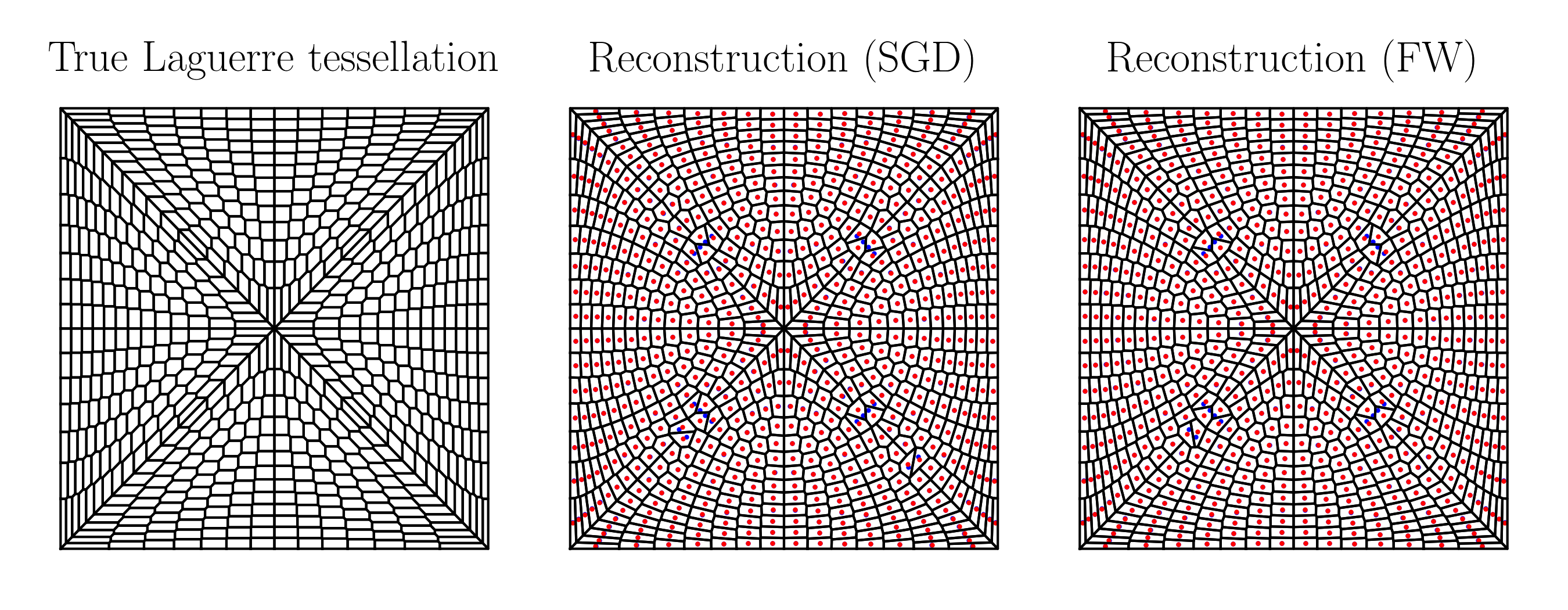}
    \caption{Reconstruction of a Laguerre tessellation, for $t=100$. The barycenters of the true solution are in blue and those of the reconstruction are in red (subgradient descent, center, and Frank-Wolfe, right).}
    \label{fig:reconstruction_cross}
\end{figure}

\begin{figure}
    \centering
    \includegraphics[scale=.7, trim = 0 35 0 0, clip = True]{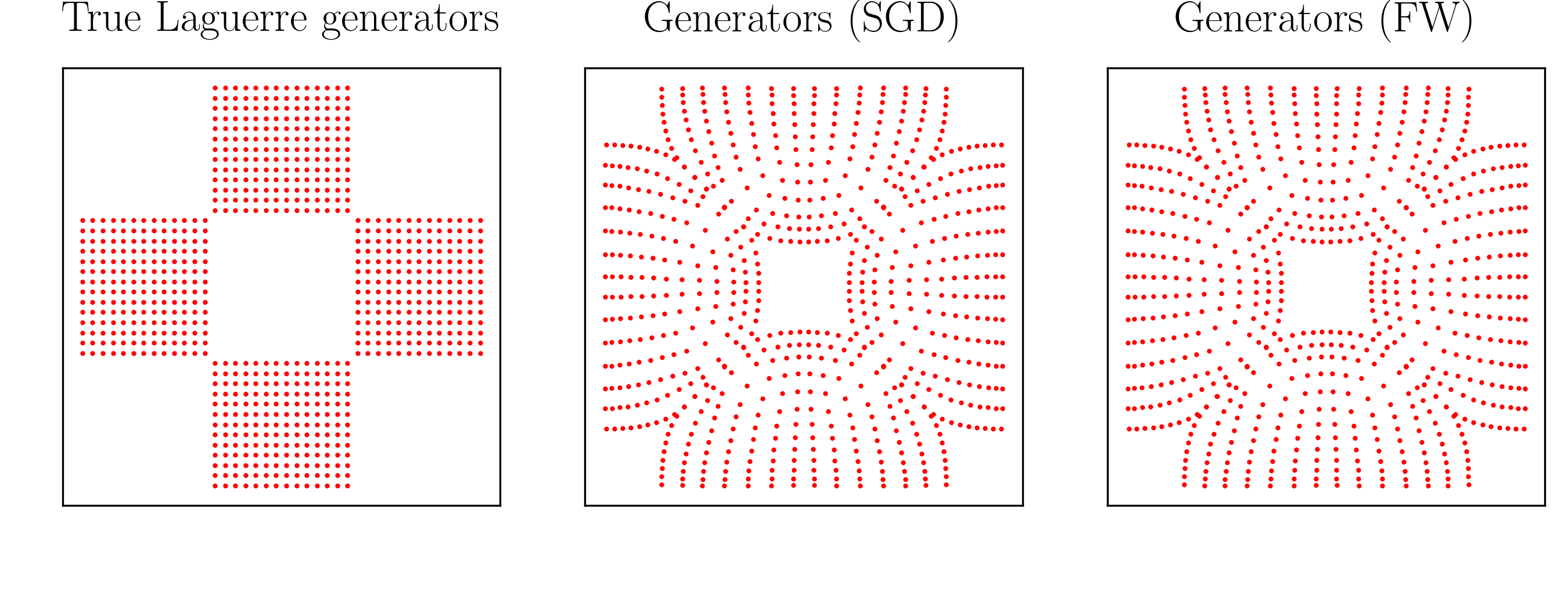}
    \caption{Generators of true Laguerre tessellation in Figure \ref{fig:reconstruction_cross} (left) and generators for reconstruction for $t=100$ (subgradient descent, center, and Frank-Wolfe, right).}
    \label{fig:generators_reconstruction_cross}
\end{figure}

\begin{figure}
    \centering
    \includegraphics[scale=.7, trim = 0 15 0 0, clip = True]{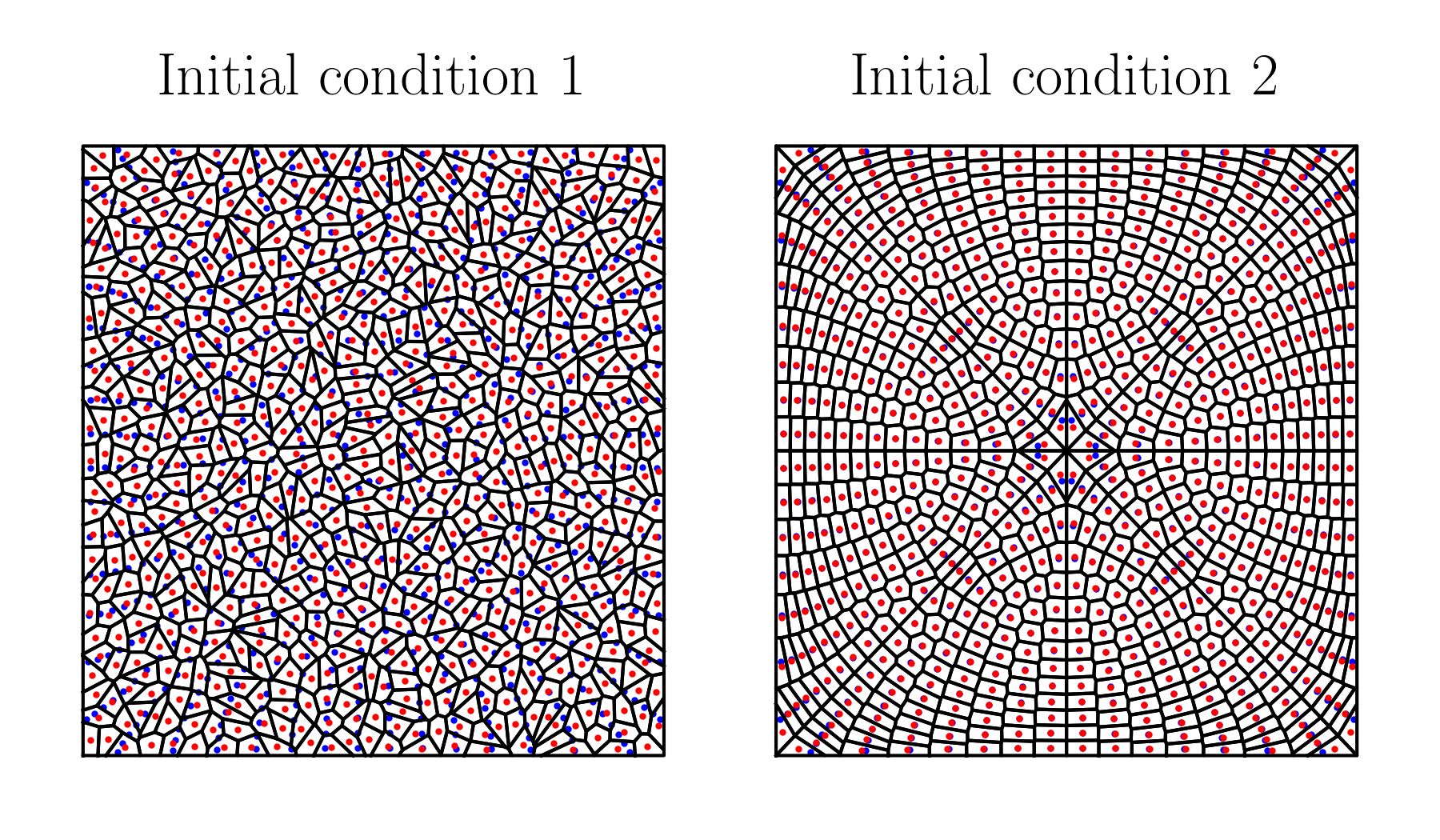}
    \caption{Initial conditions for the reconstruction problem in Figures \ref{fig:reconstruction_cross} and \ref{fig:generators_reconstruction_cross}. The barycenters of the true solutions are in blue and those of the reconstruction are in red.}
    \label{fig:initial}
\end{figure}

\begin{figure}
    \centering
    \includegraphics[width=0.45\linewidth]{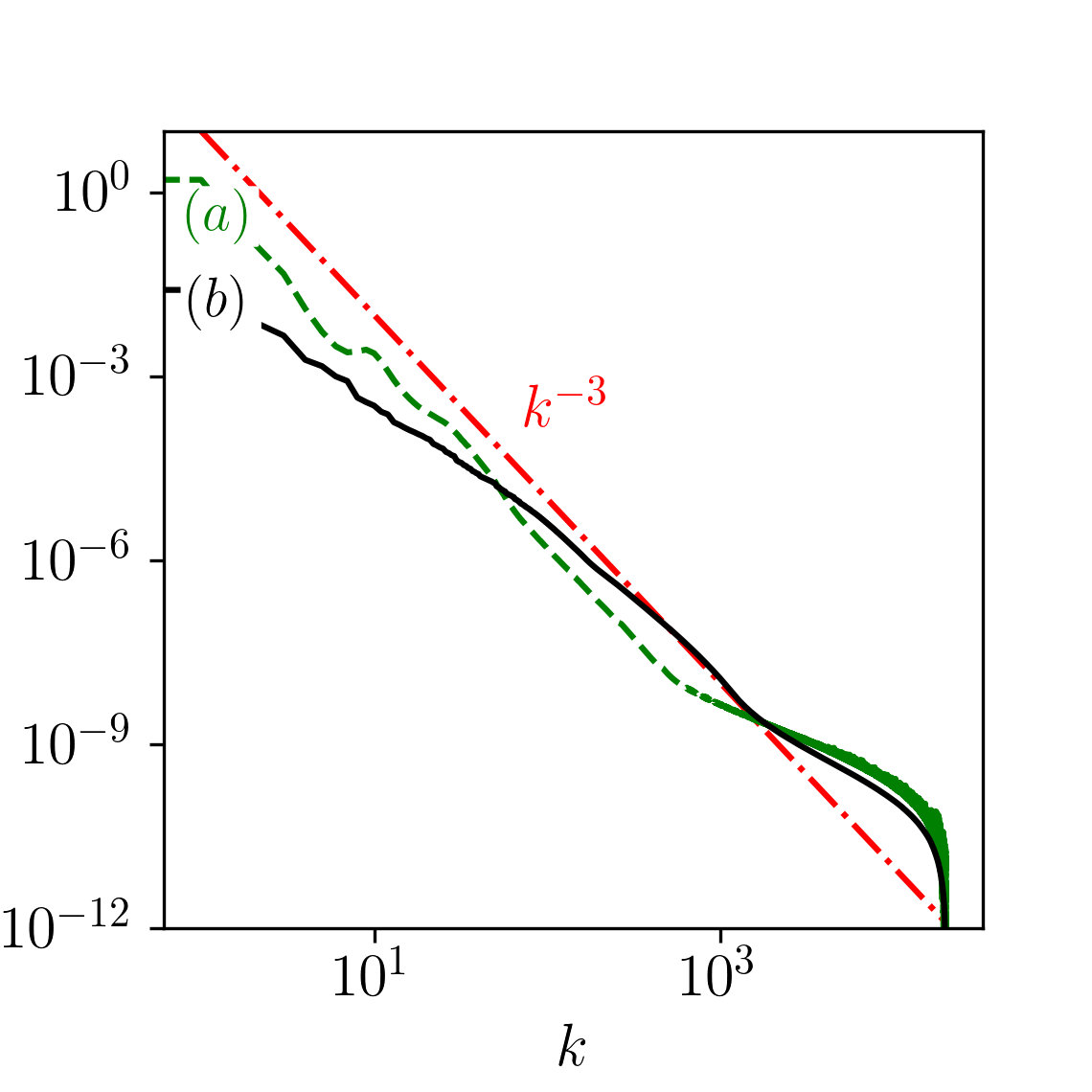}  \hspace{-1em}
    \includegraphics[width=0.45\linewidth]{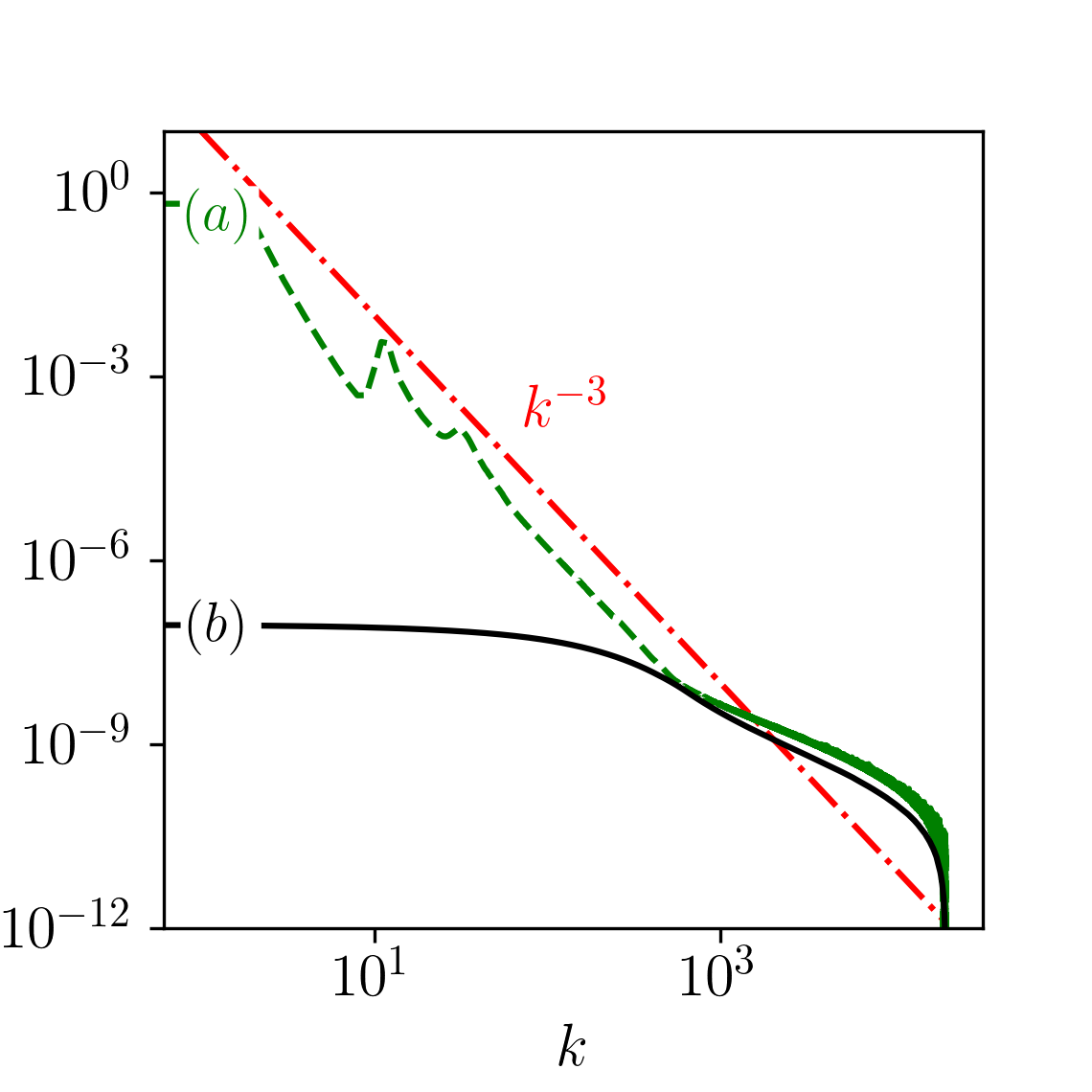}
    \caption{Convergence of the scheme for the data represented in Figure \ref{fig:reconstruction_cross} and $t=100$, shown in terms of $G_N(Y_k)- G_N(Y_K)$ 
    for the subgradient scheme $(a)$ and $J_N(B_k)- J_N(B_K)$ for the Frank-Wolfe scheme $(b)$, for $1\leq k \leq K$, and for the initial condition 1 (left) and 2 (right).}

    \label{fig:convergence_reconstruction_cross}
\end{figure}

\subsubsection{Fitting a Laguerre tessellation to data} 
\label{subsubsec:EBSD}
As in \cite{bourne2024inverting}, we apply our method to fit a Laguerre tessellation to an electron backscatter diffraction (EBSD) image of a single-phase steel (provided by Tata Steel Netherlands),
shown in Figure \ref{fig:fitting}. The pixels
are colored according to their crystallographic orientation, and the regions where this is constant are called grains. The image we consider has $N=243$ grains. 

We used the areas $v$ and the barycenters $B$ of the grains as data in our method to generate a vector of generators $Y$.
Note that since by construction $B\in \mc{C}_N$, we do need to include a regularization also in this  case. For the tests presented here we set $t=10$.
We used as reference measure $\rho = \mathbf{1}_\Omega \ed x$ where $\Omega = [0,252.25]^2$ is the image domain (measured in microns).
The results are shown in Figures \ref{fig:fitting} and \ref{fig:fitting_convergence}. For this test, both schemes are initialized using with the target barycenters, i.e.\ $Y_1= B$ and $B_1 = B$. As in the previous example, after the first iterations, the two schemes perform similarly.

\begin{figure}
    \centering
    \includegraphics[width=0.42\linewidth, trim = -22 -8  -22 -22, clip=True]{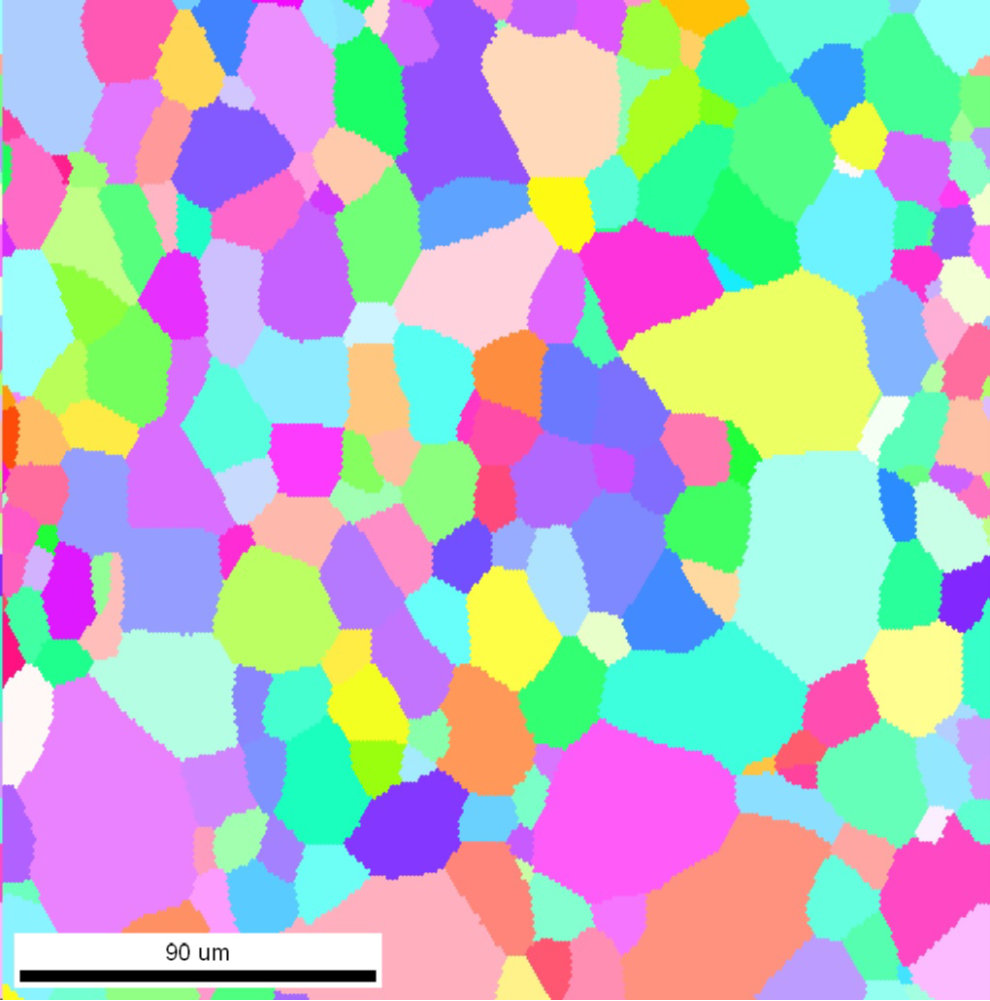}
    \includegraphics[width=0.42\linewidth,trim = 0 10  0 0, clip=True]{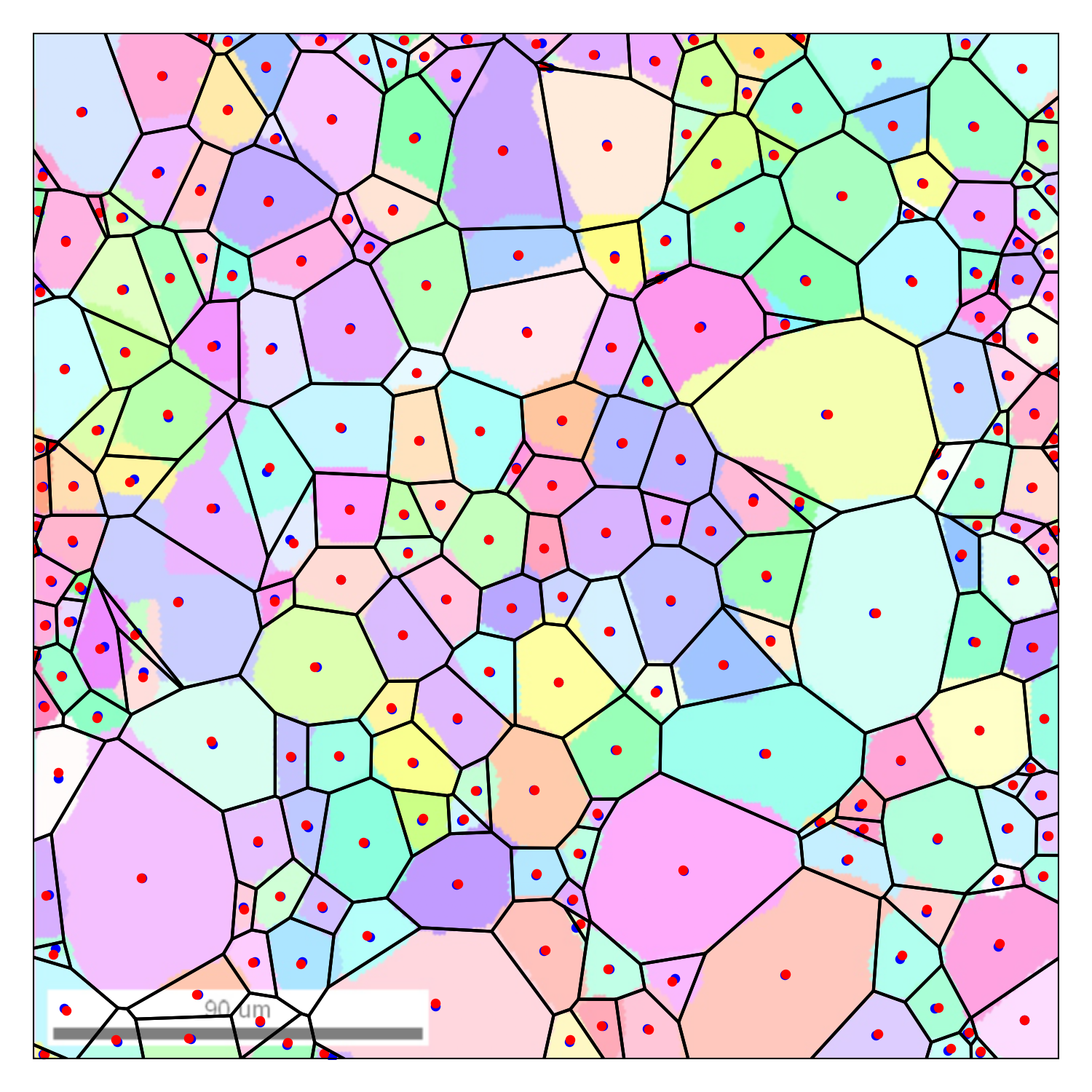}

    \hspace{-2em}
    \caption{
   Left: An EBSD image of steel. Right: A Laguerre tessellation fitted to the areas and barycenters of the grains in the EBSD image, overlaid over the image. The barycenters of the grains are in blue, the barycenters of the fitted Laguerre cells are in red. This was
    computed with $t=10$ and the subgradient descent scheme. The fitted tessellation computed with the Frank-Wolfe scheme is almost identical (not shown). 
    }
      \label{fig:fitting}
\end{figure}

    \begin{figure}
        \centering
\includegraphics[width=0.45\linewidth]{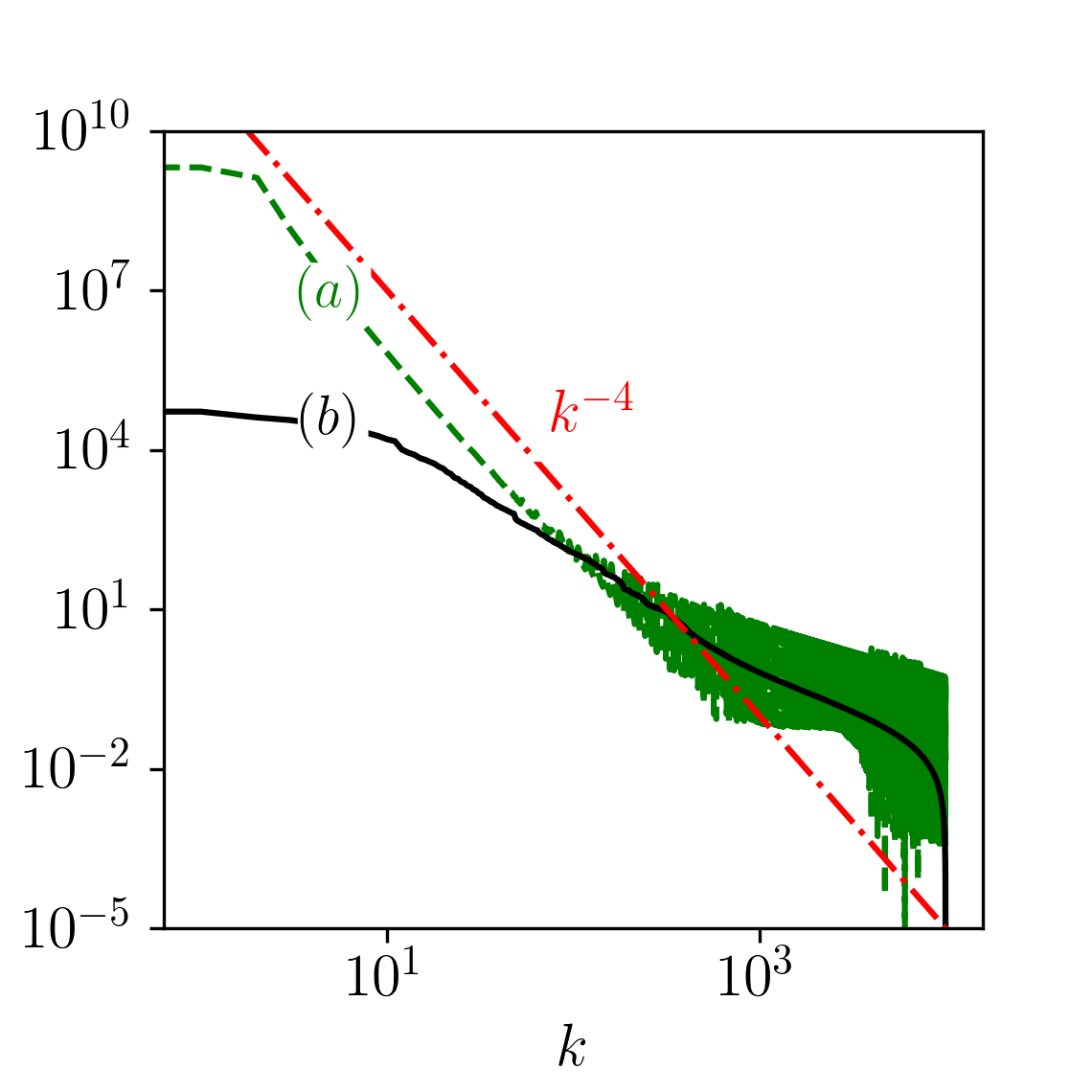}
\caption{Convergence of the scheme for the data extracted from the EBSD image in Figure \ref{fig:fitting} and $t=10$, shown in terms of $G_N(Y_k)- G_N(Y_K)$ 
and $J_N(B_k)- J_N(B_K)$ for the subgradient descent scheme $(a)$ and Frank-Wolfe scheme $(b)$,
respectively, and for $1\leq k \leq K$. } \label{fig:fitting_convergence}
\end{figure}

\section*{Acknowledgements}
The authors thank Irène Waldspurger for her valuable feedback on an earlier version of this manuscript. 
This work  was partly supported by the Labex CEMPI (ANR-11-LABX-0007-01). DPB would like to thank the UK Engineering and Physical Sciences Research Council (EPSRC) for financial support via the grant EP/V00204X/1. AN acknowledges funding by the Agence Nationale de la Recherche (ANR), project ANR-25-CE40-3242-01.  
\bibliographystyle{plain}
\bibliography{refs}

\end{document}